\documentclass[11pt,letterpaper,twoside,reqno,nosumlimits]{amsart}

\usepackage{newclude}

\usepackage[usenames,dvipsnames]{xcolor}
\usepackage{fancyhdr}
\usepackage{amsmath,amsfonts,amsbsy,amsgen,amscd,amssymb,amsthm}
\usepackage{url}

\usepackage{mathtools}

\usepackage[font=small,margin=0.25in,labelfont={bf},labelsep={colon}]{caption}

\usepackage{subcaption}

\usepackage{tikz}
\usepackage{microtype}
\usepackage{enumitem}

\definecolor{dark-gray}{gray}{0.3}
\definecolor{dkgray}{rgb}{.4,.4,.4}
\definecolor{dkblue}{rgb}{0,0,.5}
\definecolor{medblue}{rgb}{0,0,.75}
\definecolor{rust}{rgb}{0.5,0.1,0.1}

\usepackage[colorlinks=true]{hyperref}

\hypersetup{urlcolor=rust}
\hypersetup{citecolor=dkblue}
\hypersetup{linkcolor=dkblue}

\usepackage{setspace}

\usepackage{graphicx}
\usepackage{booktabs,longtable,tabu} 
\usepackage{multirow} 

\usepackage{float}

\usepackage[full]{textcomp}

\usepackage[scaled=.98,sups,osf]{XCharter}
\usepackage[scaled=1.04,varqu,varl]{inconsolata}
\usepackage[type1]{cabin}
\usepackage[charter,vvarbb,scaled=1.07]{newtxmath}
\usepackage[cal=boondoxo]{mathalfa}
\usepackage[T1]{fontenc}

\usepackage{bm} 

\graphicspath{{figures/}}

\newtheorem{theorem}{Theorem}[section]
\newtheorem{lemma}[theorem]{Lemma}

\newtheorem{proposition}[theorem]{Proposition}

\newtheorem{corollary}[theorem]{Corollary}

\theoremstyle{definition}

\newtheorem{assumption}[theorem]{Assumption}

\numberwithin{equation}{section} 
\numberwithin{figure}{section}
\numberwithin{table}{section}

\floatstyle{plaintop}
\newfloat{recipe}{thp}{lor}
\floatname{recipe}{Recipe}
\numberwithin{recipe}{section}

\renewcommand{\phi}{\varphi}

\newcommand{\eps}{\varepsilon}

\newcommand{\diff}[1]{\mathrm{d}{#1}}
\newcommand{\idiff}[1]{\, \diff{#1}}

\newcommand{\triplenorm}[1]{{\left\vert\kern-0.25ex\left\vert\kern-0.25ex\left\vert #1
    \right\vert\kern-0.25ex\right\vert\kern-0.25ex\right\vert}}

\newcommand{\bbD}{\mathbb{D}}

\newcommand{\calG}{\mathcal{G}}

\newcommand{\calR}{\mathcal{R}}

\newcommand{\frG}{\frak G}

\newcommand{\frR}{\frak R}

\DeclareMathOperator{\supp}{supp}

\def\XXint#1#2#3{{%
\setbox0=\hbox{$#1{#2#3}{\int}$}
\vcenter{\hbox{$#2#3$}}\kern-.5\wd0}}

\renewcommand{\leq}{\leqslant}
\renewcommand{\geq}{\geqslant}
\renewcommand{\subset}{\subseteq}

\allowdisplaybreaks

\usepackage[numbers]{natbib}
\mathtoolsset{showonlyrefs}

\usepackage{booktabs}
\usepackage{tabularx}
\usepackage{array}

\usepackage{comment}

\begin{document}
\title{Exact blowup analysis for the weak-advection Hou--Li model}
\author{Thomas Y. Hou\({}^{*}\),\,Xiang Qin\({}^{\dagger}\),\, Xiuyuan Wang\({}^{\ddagger}\)}
\thanks{\({}^{*}\)Applied and Computational Mathematics, Caltech, Pasadena, CA. Email: \href{mailto:hou@cms.caltech.edu}{\texttt{hou@cms.caltech.edu}}.}
\thanks{\({}^{\dagger}\)Applied and Computational Mathematics, Caltech, Pasadena, CA. Email: \href{mailto:xqin2@caltech.edu}{\texttt{xqin2@caltech.edu}}.}

\thanks{\({}^{\ddagger}\)This research was conducted while Xiuyuan Wang was a SURF student at Caltech in the summer of 2023. Email: \href{mailto:zaenorae@gmail.com}{\texttt{zaenorae@gmail.com}}.}

\begin{abstract}
We study self-similar singularity formation for the one-dimensional
weak-advection Hou--Li model, a reduced model motivated by the axisymmetric
Euler equations. 
In the periodic setting, we construct exact finite-time self-similar blowup solutions for \(2/3<a<1\), with profiles that are neither focusing nor expanding. 
In the whole-space setting with a Neumann condition, we
construct exact finite-time self-similar blowup solutions for the full range \(0<a\leq1\), with profiles of focusing, non-expanding/non-focusing, or expanding form depending on the sign of the self-similar scaling parameter.
The construction is based on a fixed-point formulation near the origin,
followed by an ODE extension argument. We also establish regularity,
asymptotic behavior, monotonicity properties of the profiles, and uniqueness up to the natural scaling invariance.
\end{abstract}

\maketitle

\section{Introduction}
We study the one-dimensional
inviscid weak-advection Hou--Li model
\begin{equation}\label{eqt:gHL}
\begin{aligned}
(u_{1})_{t}+2a \psi_{1}(u_{1})_{z} & = 2(\psi_{1})_{z} u_{1}, \\
(\omega_{1})_{t}+2a \psi_{1}(\omega_{1})_{z} & = (u_{1}^{2})_{z}, \\
-(\psi_{1})_{zz} & = \omega_{1},
\end{aligned}
\end{equation}
where \(a\in[0,1]\) measures the strength of advection relative to vortex
stretching, and we impose the normalization \(\psi_1(0)=0\). This model may be
viewed as a one-dimensional reduction of the three-dimensional axisymmetric
Euler equations along the symmetry axis, as we explain as follows.

Understanding the formation of singularities in fluid equations is one of the
central challenges in the theory of nonlinear partial differential equations.
In particular, it remains unknown whether smooth solutions of the
three-dimensional incompressible Euler or Navier--Stokes equations can develop
a finite-time singularity from smooth initial data. One of the main
difficulties arises from the vortex-stretching mechanism, which creates a
strong nonlinear coupling between the velocity and vorticity fields. While
vortex stretching can amplify vorticity and potentially drive singularity
formation, its interaction with advection and the underlying nonlocal
structure of the equations remains subtle and not yet fully understood.

In recent years, significant progress has been made in rough or borderline regularity classes.
Elgindi constructed finite-time blowup for the three-dimensional axisymmetric Euler equations without swirl in a \(C^{1,\alpha}\) velocity class for small \(\alpha>0\) \cite{elgindi2021finite}. Subsequent works improved the range of admissible H\"older exponents and reached much larger classes; see, for instance, Cordoba, Martinez-Zoroa, and Zheng \cite{cordoba2023finite}, Shao, Wei, Zhang, and Zhang \cite{shao2026self}, Shkoller \cite{shkoller2026euler}, and Chen \cite{chen2026eulerI,chen2026eulerII}. In the presence of a boundary, motivated by the numerical observations of \cite{luo2014potentially}, Chen and Hou proved stable nearly self-similar blowup for the two-dimensional Boussinesq equations and the three-dimensional axisymmetric Euler equations with smooth data \cite{chenhou2022stable}.
There remain several numerical scenarios of potential singularity formation that are not yet fully understood rigorously; see, for example, \cite{houhuang2022twoscale,hou2023interior,hou2023navierstokes,houhuang2023degenerate,hou2026generalized}.
Singularity formation is also closely related to nonuniqueness phenomena, since singular initial data may generate forward self-similar solutions, and an unstable self-similar profile can provide a mechanism for constructing different weak solutions from the same initial data. This viewpoint is closely related to the works of Jia and \v{S}ver\'ak on forward self-similar solutions and possible ill-posedness in the natural energy space \cite{jia2014local,jia2015are}, the numerical investigations of Guillod and \v{S}ver\'ak \cite{guillod2023numerical}, and the forced nonuniqueness result of Albritton, Bru\'e, and Colombo \cite{albritton2022nonuniqueness}. More recently, Hou, Wang, and Yang proved nonuniqueness of Leray--Hopf solutions to the unforced three-dimensional Navier--Stokes equations by constructing a self-similar solution with an unstable linearized mode \cite{houwangyang2025nonuniqueness}. We also refer the reader to related works on nonuniqueness for fluid equations, including Vishik's work on the two-dimensional Euler equations \cite{vishik2018partI,vishik2018partII}, the exposition and further development in \cite{albritton2024vishik}, and the recent work of Mengual and Solera on the forced two-dimensional Navier--Stokes and dissipative SQG equations \cite{mengualsolera2026sharp}.

To gain more insight into the blowup mechanisms, a number of reduced models have been
proposed that retain essential structural features of the full fluid equations
while remaining more amenable to analysis. One of the earliest and most
influential examples is the Constantin--Lax--Majda (CLM) model
\cite{constantin1985simple}, which captures the nonlocal nature of vortex
stretching through the Hilbert transform. The CLM model admits explicit
solutions and exhibits finite-time blowup from smooth initial data. De Gregorio
later incorporated an advection term into the CLM equation in order to study
the competition between transport and vortex stretching, revealing that the
transport effect can have a stabilizing influence on the dynamics.
A broader class of models arises by introducing a parameter that controls the
relative strength of advection and vortex stretching. This idea leads to the
generalized Constantin--Lax--Majda (gCLM) model proposed by Okamoto
\emph{et al.} \cite{okamoto2008generalization}. Various weak-advection
versions of the gCLM model have been studied extensively; see, for example,
\cite{cordoba2005formation,castro2010infinite,elgindi2020effects,
chen2020singularity,chen2021finite,huang2023self}. These works reveal a wide
range of behaviors, including both global regularity and finite-time
singularity formation depending on the strength of the advection term. In particular, self-similar structures have played an important role in the
analysis of these models and in the description of their possible singularity
formation.

Another important direction comes from reduced models derived from the
axisymmetric Euler and Navier--Stokes equations. The vorticity
formulation of the three-dimensional incompressible Euler equations reads
\[
    \omega_t+u\cdot\nabla\omega=\omega\cdot\nabla u,
    \qquad
    \nabla\cdot u=0,
\]
where \(\omega=\nabla\times u\). In cylindrical coordinates
\((r,\theta,z)\), an axisymmetric velocity field with swirl can be written as
\[
    u=u^r(r,z,t)e_r+u^\theta(r,z,t)e_\theta+u^z(r,z,t)e_z,
\]
and the corresponding vorticity takes the form
\[
    \omega=\omega^r(r,z,t)e_r+\omega^\theta(r,z,t)e_\theta
    +\omega^z(r,z,t)e_z .
\]
Introducing the angular stream function \(\psi^\theta\), we write
\[
    u^r=-(\psi^\theta)_z,\qquad
    u^z=\frac1r(r\psi^\theta)_r .
\]
 In 2008, Hou and Li
\cite{hou2008dynamic} considered the axisymmetric Euler variables
\[
    u_1=\frac{u^\theta}{r},\qquad
    \omega_1=\frac{\omega^\theta}{r},\qquad
    \psi_1=\frac{\psi^\theta}{r}.
\]
These variables satisfy the system
\[
\begin{aligned}
(u_1)_t+u^r(u_1)_r+u^z(u_1)_z
    &=2(\psi_1)_z u_1,\\
(\omega_1)_t+u^r(\omega_1)_r+u^z(\omega_1)_z
    &=(u_1^2)_z,\\
-\left(\partial_r^2+\frac3r\partial_r+\partial_z^2\right)\psi_1
    &=\omega_1.
    \\
      u^r=-r(\psi_1)_z,\qquad&
    u^z=2\psi_1+r(\psi_1)_r.
\end{aligned}
\]
By considering the corresponding one-dimensional reduction along the symmetry
axis, in which \(u_1,\omega_1,\psi_1\) depend only on the axial variable, they
obtained \eqref{eqt:gHL} in the case \(a=1\). Thus, for \(a=1\), solutions of
the Hou--Li model can be lifted back to exact axisymmetric Euler solutions of
infinite energy by setting
\[
    u^\theta(t,r,z)=r u_1(t,z),\qquad
    \omega^\theta(t,r,z)=r\omega_1(t,z),\qquad
    \psi^\theta(t,r,z)=r\psi_1(t,z).
\]
In the same work, Hou and Li proved global regularity of this model by
constructing a Lyapunov functional that reveals an exact cancellation between
the advection and vortex-stretching effects.

Hou and Wang \cite{hou2023blowup} later studied the weak-advection version of
the Hou--Li model in the periodic setting, introducing the parameter
\(a\in[0,1]\) in \eqref{eqt:gHL}. Using a combination of analytical techniques and
computer-assisted estimates, they proved that there exists a sufficiently
small \(\delta>0\) such that for \(a\in(1-\delta,1)\), the weak-advection
model develops a finite-time singularity from smooth initial data. Although
their rigorous analysis applied only to values of \(a\) very close to \(1\),
their numerical study over a wider range of \(a\) revealed rich self-similar
structures associated with blowup solutions. In particular, when \(a\) is
close to \(1\), the singularity is neither purely focusing nor expanding,
whereas for \(a\) below a critical value, the blowup exhibits focusing
behavior.

Motivated by the numerical observations of Hou and Wang, in this work we
construct self-similar blowup solutions for the weak-advection Hou--Li model
in a much wider range of the parameter \(a\), capturing the different
self-similar behaviors suggested by their computations.
Our construction is inspired by the fixed-point argument in \cite{huang2023self}, but the present system lacks the global monotonicity and convexity structures used there. Instead, we establish the existence of the profiles by combining local monotonicity estimates with a local extension argument. For convenience we introduce the variables,
\[
\omega := \omega_1, \qquad
u := 2\psi_1, \qquad
\theta := u_1^2, \qquad
x := z.
\]
Then \eqref{eqt:gHL} becomes
\begin{equation}\label{eqt:gHL1}
\begin{aligned}
\theta_{t}+a u \theta_{x} &= 2 u_{x} \theta, \\
\omega_{t}+a u \omega_{x} &= \theta_{x}, \\
-u_{xx} &= 2 \omega, \qquad u(0)=0.
\end{aligned}
\end{equation}

Our primary interest is the formation of finite–time singularities exhibiting a self–similar structure. Motivated by the scaling properties of \eqref{eqt:gHL1}, we seek exact self–similar finite–time blowups of the form
\begin{equation}\label{eqt:ansatz}
\begin{aligned}
\omega(x,t) &= (T-t)^{c_{\omega}}\, \Omega\!\left(\frac{x}{(T-t)^{c_{l}}}\right), \\
u(x,t)      &= (T-t)^{c_{u}}\,     U\!\left(\frac{x}{(T-t)^{c_{l}}}\right), \\
\theta(x,t) &= (T-t)^{c_{\theta}}\, \Theta\!\left(\frac{x}{(T-t)^{c_{l}}}\right),
\end{aligned}
\end{equation}
where $T$ denotes the blowup time and $\Omega$, $U$, and $\Theta$ are the corresponding self–similar profiles.

Substituting the ansatz \eqref{eqt:ansatz} into \eqref{eqt:gHL1} and writing
\[
X=\frac{x}{(T-t)^{c_l}},
\]
we obtain the relations
\[
2c_{\omega}+2c_{l}=c_{\theta}, \qquad c_{\theta}=-2,
\]
and the following system of ordinary differential equations for the profiles:
\[
\begin{aligned}
(c_{l} X+a U) \Theta_{X} & = (c_{\theta}+2 U_{X}) \Theta, \\
(c_{l} X+a U) \Omega_{X} & = c_{\omega} \Omega+\Theta_{X}, \\
-U_{XX} & = 2 \Omega, \qquad U(0)=0.
\end{aligned}
\]

For simplicity we drop the capitals and write $\omega,u,\theta,x$ for $\Omega,U,\Theta,X$. Thus the profile equations become
\begin{equation}\label{eqt:main}
\begin{aligned}
(c_{l} x+a u)\,\theta_{x}& = (c_{\theta}+2 u_{x})\,\theta, \\
(c_{l} x+a u)\,\omega_{x}& = c_{\omega}\,\omega+\theta_{x}, \\
-u_{xx} & = 2 \omega, \qquad u(0)=0.
\end{aligned}
\end{equation}

The profile system possesses a scaling invariance: if $(\omega,u,\theta,c_l,c_\omega,c_\theta)$ solves \eqref{eqt:main}, then for any \(\lambda_1\in\mathbb R\) and \(\lambda_2>0\), the rescaled tuple
\begin{equation}\label{eq:profile_scaling_invariance}
\left(
\lambda_1\, \omega(\lambda_2 x),\
\frac{\lambda_1}{\lambda_2^{2}} u(\lambda_2 x),\
\frac{\lambda_1^{2}}{\lambda_2^{2}} \theta(\lambda_2 x),\
\frac{\lambda_1}{\lambda_2} c_{l},\
\frac{\lambda_1}{\lambda_2} c_{\omega},\
\frac{\lambda_1}{\lambda_2} c_{\theta}
\right).
\end{equation}
is also a solution. Therefore the absolute values of the parameters are not essential; what matters are the ratios $c_l/c_\theta$ and $c_\omega/c_\theta$.

We impose the following structural assumptions on the profiles.
\begin{assumption}
We seek solutions satisfying:
\begin{itemize}
\item $u(x)$ is odd, and $\theta(x)$ is the square of an odd function;
\item $u(x)$ and $\theta(x)$ are sufficiently regular;
\item \textbf{Nondegeneracy:} there exists $k\in\mathbb{N}$ such that
\[
\omega^{(2k+1)}(0)\neq 0,
\qquad
\theta^{(2k+2)}(0)\neq 0.
\]
\end{itemize}
\end{assumption}

Under these assumptions, we first focus on the lowest-order nondegenerate case
\(k=0\). Since our goal is to
construct blowup solutions, it is natural to require the $u_x$ term at
the origin to be positive, both for the physical equation \eqref{eqt:gHL1} and
for the profile equation \eqref{eqt:main}. This leads to profiles with
\(\omega_x(0)>0\). Moreover, since \(\theta\) comes from the square of the
original variable \(u_1\), we require \(\theta_{xx}(0)>0\). Within this class,
and up to the natural scaling invariance of the profile system, we establish
the existence and uniqueness of the relevant profiles, together with their
regularity and asymptotic properties. These profile results then yield
finite-time blowup solutions of the evolution equation \eqref{eqt:gHL1}. We
summarize the main conclusions below.

\begin{theorem}[Periodic profiles and exact self-similar blowup]
\label{theorem: main_periodic}
Let \(2/3<a<1\). Then there exist constants
    $\widetilde c_l=0,\,
    \widetilde c_\omega=-1,\,
    \widetilde c_\theta=-2,$
and a nontrivial symmetric \(2\pi\)-periodic profile
    $(\widetilde\Omega,\widetilde U,\widetilde\Theta)$
solving the profile system \eqref{eqt:main}. The profile has the following
properties:
\begin{enumerate}
\item \(\widetilde U\) and \(\widetilde\Omega\) are odd, and
\(\widetilde\Theta\) is even.

\item At the origin,
\[
    \widetilde U_X(0)>0,\qquad
    \widetilde\Omega_X(0)>0,\qquad
    \widetilde\Theta_{XX}(0)>0.
\]

\item On the positive half-period, \(\widetilde U\) and
\(\widetilde\Theta\) are nonnegative.

\item The quantities $\widetilde U(X)/X$ and $\widetilde\Theta(X)/X^2$
are monotone decreasing on the positive half-period.
\item The profile is smooth inside each period. More precisely, with
\(\beta\) and \(\gamma\) defined in Section~2, one has
\[
    \widetilde\Omega
    \in C^{\lceil\beta\rceil-1,\beta-\lceil\beta\rceil+1}_{\mathrm{per}}(\mathbb R)
    \cap C^\infty\bigl(\mathbb R\setminus\{(2k+1)\pi:k\in\mathbb Z\}\bigr),
\]
\[
    \widetilde U
    \in C^{1+\lceil\beta\rceil,\beta-\lceil\beta\rceil+1}_{\mathrm{per}}(\mathbb R)
    \cap C^\infty\bigl(\mathbb R\setminus\{(2k+1)\pi:k\in\mathbb Z\}\bigr),
\]
\[
    \widetilde\Theta
    \in C^{\lceil\gamma\rceil-1,\gamma-\lceil\gamma\rceil+1}_{\mathrm{per}}(\mathbb R)
    \cap C^\infty\bigl(\mathbb R\setminus\{(2k+1)\pi:k\in\mathbb Z\}\bigr).
\]
\end{enumerate}
Moreover, within the class of profiles satisfying the above properties, the
profile is unique up to the natural scaling invariance
\eqref{eq:profile_scaling_invariance}.
Furthermore, for any \(T>0\), the following functions
\[
    \omega(x,t)
    =
    \frac{1}{T-t}\widetilde\Omega(x),
\qquad
    u(x,t)
    =
    \frac{1}{T-t}\widetilde U(x),
\qquad
    \theta(x,t)
    =
    \frac{1}{(T-t)^2}\widetilde\Theta(x),
\]
define a \(2\pi\)-periodic solution of \eqref{eqt:gHL1} on
\(\mathbb S^1\times[0,T)\), which blows up at \(t=T\) in a neither focusing nor expanding self-similar form.
\end{theorem}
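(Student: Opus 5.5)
The plan is to construct the profile by solving the singular ODE system \eqref{eqt:main} with \(c_l=0\) outward from \(x=0\) up to the first positive zero of \(u\), and then reflect to obtain a periodic profile. With \(c_l=0\), \(c_\theta=-2\) and \(c_\omega=-1\), the system reads
\[
a u\,\theta_x=(2u_x-2)\theta,\qquad a u\,\omega_x=-\omega+\theta_x,\qquad -u_{xx}=2\omega .
\]
Both \(\theta\)- and \(\omega\)-equations are singular wherever \(u=0\).

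\emph{Step 1 (normalization).} Inserting \(u\sim u_1x\) and \(\theta\sim\theta_2x^2\) into the first equation forces
\[
2au_1=2u_1-2,\qquad\text{so}\qquad u_1=\frac1{1-a}>0 ,
\]
which gives \(\widetilde U_X(0)>0\). Fixing \(c_\theta=-2\) forces \(\lambda_1=\lambda_2\) in \eqref{eq:profile_scaling_invariance}. Under this residual scaling \(u_1\) is invariant and \(\theta_2\mapsto\lambda^2\theta_2\), so we may normalize \(\theta_2=1\). The second equation, expanded at first order, then determines \(\omega_1=\omega_x(0)\) in terms of \(\theta_2\). Consistency with \(u_{xxx}(0)=-2\omega_1\) and the requirement \(\omega_1>0\) will be checked at this stage. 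The residual freedom is therefore exhausted except for a dilation of \(x\), which we will use at the end to set the half-period equal to \(\pi\).

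\emph{Step 2 (local solution near \(0\)).} Following the fixed-point approach of \cite{huang2023self}, we work in the variables \(U=u/x\), \(T=\theta/x^2\) and \(\omega\) on a small interval \([0,\delta]\). We write \(\theta\) and \(\omega\) via integrating factors of the singular linear equations. For example,
\[
\theta=x^2\exp\!\Big(\int_0^x \frac{2u_x-2}{au}-\frac2s\,\diff s\Big),
\]
and \(\omega\) is obtained by the variation-of-constants formula. We then recover \(u\) from \(-u''=2\omega\), \(u(0)=0\), odd. On \([0,\delta]\) we show that this map is a contraction near the leading-order ansatz, and we propagate the local monotonicity properties, namely that \(u/x\) and \(\theta/x^2\) are decreasing. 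Odd/even extension across \(0\) then gives property (1).

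\emph{Step 3 (ODE extension).} For \(x>\delta\) the system is a regular ODE as long as \(u>0\). We continue the solution and prove that:
\begin{itemize}
\item \(u>0\) and \(\theta>0\) hold up to some finite first zero \(x_*\) of \(u\);
\item \(u_x(x_*)=-m<0\);
\item the monotonicity of \(u/x\) and \(\theta/x^2\) persists.
\end{itemize}
The monotonicity of \(u/x\) should come from \(u''=-2\omega\) together with a sign argument for \(\omega\). The monotonicity of \(\theta/x^2\) follows from
\[
\Big(\log\frac{\theta}{x^2}\Big)'=\frac{2(xu_x-u)-2x}{axu}<0
\qquad\text{when } xu_x\le u .
\]
Near \(x_*\), local analysis gives
\[
\theta\sim|x-x_*|^{\gamma},\quad \gamma=\frac{2(1+m)}{am},\qquad \omega\sim|x-x_*|^{\beta},\quad \beta=\frac1{am}.
\]
These exponents yield the stated finite H\"older regularity at \((2k+1)\pi\), and they explain why \(\theta\) and \(\omega\) vanish suitably there. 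This vanishing makes the odd reflection of \(u,\omega\) and the even reflection of \(\theta\) about \(x_*\) a solution. Rescaling \(x_*\to\pi\) then produces the \(2\pi\)-periodic profile, with interior smoothness following from the regular ODE theory. Uniqueness follows because, after normalization, the fixed point in Step 2 is unique and ODE continuation is unique. Finally, the blowup statement is a direct substitution of the ansatz with \(c_l=0\).

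\emph{Main obstacle.} I expect the hardest step to be Step 3: proving that \(u\) actually reaches zero at a finite \(x_*\) with a transversal slope, and that the resulting exponents \(\beta\) and \(\gamma\) are large enough for the reflected functions to solve the equations across \(x_*\). The condition \(2/3<a<1\) should enter exactly here, through a quantitative bound on \(m\). My first attempt would be to use the monotonicity of \(u/x\) together with the energy-type identity obtained from multiplying the \(\omega\)-equation by \(u\). This should give \(am<\) an explicit threshold, ensuring for instance that \(\beta>1\), so that \(\omega\) and \(\theta_x\) vanish at \(x_*\).
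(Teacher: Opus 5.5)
Steps~1 and~2 are fine and differ from the paper only in bookkeeping. You fix $c_\theta=-2$, which forces $u_x(0)=1/(1-a)$; the paper instead normalizes $u_x(0)=\omega_x(0)=1$, so $c_\theta=2(a-1)$, and rescales at the end. A contraction in a weighted norm such as $\sup|\delta u_x|/x^2$ on a short interval is a plausible substitute for the paper's Schauder argument. The genuine gap is Step~3. It carries the theorem, you leave it open, and the route you sketch aims at the wrong target.

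The endpoint slope is not something to be bounded; it is forced. Integrating the $\omega$-equation once, using $\omega=-u_{xx}/2$ and $u(0)=\theta(0)=0$, gives the first integral
\[
2a\,u\,u_{xx}=(u_x-u_1)(a u_x+a u_1-2)-4\theta,\qquad u_1=\frac{1}{1-a}.
\]
So at a zero $x_*$ of $u$ where $\theta\to0$ and $u\,u_{xx}\to0$, one must have $u_x(x_*)=\frac2a-u_1=-\frac{3a-2}{a(1-a)}$. Hence $am=\frac{3a-2}{1-a}$ exactly. Your formulas then give $\beta=\frac{1-a}{3a-2}$ and $\gamma=\frac2a+\frac{2(1-a)}{3a-2}$, which are the indices in the statement. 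An inequality $am<\text{const}$ from an energy estimate cannot produce these. Moreover, the bound you hope for ($\beta>1$, i.e.\ $am<1$) is false for $a\in[3/4,1)$, and it is not needed: $\omega\to0$ only requires $\beta>0$, and $\theta_x\to0$ follows from $\gamma>2$. The hypothesis $a>2/3$ enters only through the sign of this forced slope.

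More seriously, nothing in your plan shows that the continuation terminates at a finite zero of $u$ at which $u_x$ and $\theta$ have limits. It could instead blow up while $u>0$, or keep $u>0$ on all of $(0,\infty)$. Your local analysis at $x_*$ presupposes exactly this. The paper's mechanism works in the variables $g=u/x$, $m=\theta/x^2$, $v=u_x$ with $v(0)=1$, in three steps.
First, $v$ decreases while $v\geq0$ and stays negative after its first zero, so $g>v$ and $g$ decreases. This only needs $\omega>0$ where $u_x\geq0$; a global sign of $\omega$ is not available.
Second, $m/g^2=\theta/u^2$ decreases, so $\theta\leq Cu^2$.
Third, $q=av+3a-2$ satisfies $\bigl(q/\sqrt{g\phi}\bigr)_x=-2xm\,g^{-3/2}\phi^{-1/2}<0$, with $\phi=\exp\int_0^x\frac{g-1}{yg}\,dy$.

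Since $\sqrt{g\phi}\to0$ at the end of the maximal interval (either $g\to0$, or $x\to\infty$ with $\phi\to0$), this gives $\limsup v\leq-\frac{3a-2}{a}$. Combined with $(xg)_x=v$, it forces $u$ to vanish at a finite $L$ exactly when $a>2/3$. The same representation then yields $v\to-\frac{3a-2}{a}$ and the endpoint expansions used for the reflection and the H\"older regularity. Without an argument of this kind, the existence of the half-period, the periodic reflection, and item~(5) are not established.
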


\begin{theorem}[Whole-space profiles and exact self-similar blowup]
\label{theorem: main_whole_space}
Let \(0<a\leq1\). Then there exist constants
$    \widetilde c_l=2/a-3,\,
    \widetilde c_\omega=2-2/a,\,
    \widetilde c_\theta=-2,$
and a nontrivial symmetric whole-space profile
    $(\widetilde\Omega,\widetilde U,\widetilde\Theta)$
solving the profile system \eqref{eqt:main} and satisfying the Neumann
condition
\[
    \lim_{|X|\to+\infty}\widetilde U_X(X)=0.
\]
The profile has the following properties:
\begin{enumerate}
\item \(\widetilde U\) and \(\widetilde\Omega\) are odd, and
\(\widetilde\Theta\) is even.

\item At the origin,
\[
    \widetilde U_X(0)>0,\qquad
    \widetilde\Omega_X(0)>0,\qquad
    \widetilde\Theta_{XX}(0)>0.
\]

\item If \(2/3<a\leq1\), then the quantities $\widetilde U(X)/X$ and $\widetilde\Theta(X)/X^2$
are monotone decreasing on the positive half-line.
\item The support of the profile depends on the sign of \(\widetilde c_l\).
If \(0<a\leq2/3\), then
\[
    \supp \widetilde\Omega=\supp \widetilde\Theta=\mathbb R.
\]
If \(2/3<a\leq1\), then there exists \(L>0\) such that
\[
    \supp \widetilde\Omega\cup\supp \widetilde\Theta\subset[-L,L].
\]
\item The profile has the following regularity.
For \(0<a\leq2/3\), one has
    $\widetilde\Omega,\widetilde U,\widetilde\Theta\in C^\infty(\mathbb R).$
For \(2/3<a<1\), the functions \(\widetilde\Omega\) and
\(\widetilde\Theta\) are compactly supported. More precisely, with
\(\beta\) and \(\gamma\) defined in Section~3, one has
\[
    \widetilde\Omega
    \in C^{\lceil\beta\rceil-1,\beta-\lceil\beta\rceil+1}(\mathbb R)
    \cap C^\infty\bigl(\mathbb R\setminus\{\pm L\}\bigr),
\]
\[
    \widetilde U
    \in C^{1+\lceil\beta\rceil,\beta-\lceil\beta\rceil+1}(\mathbb R)
    \cap C^\infty\bigl(\mathbb R\setminus\{\pm L\}\bigr),
\]
\[
    \widetilde\Theta
    \in C^{\lceil\gamma\rceil-1,\gamma-\lceil\gamma\rceil+1}(\mathbb R)
    \cap C^\infty\bigl(\mathbb R\setminus\{\pm L\}\bigr).
\]
For \(a=1\), after extending \(\widetilde\Omega\) and
\(\widetilde\Theta\) by zero outside their support, one has
\[
    \widetilde\Omega\in C^{0,1}(\mathbb R)
    \cap C^\infty\bigl(\mathbb R\setminus\{\pm L\}\bigr),
\]
\[
    \widetilde U\in C^{2,1}(\mathbb R)
    \cap C^\infty\bigl(\mathbb R\setminus\{\pm L\}\bigr),
\]
\[
    \widetilde\Theta\in C^{1,1}(\mathbb R)
    \cap C^\infty\bigl(\mathbb R\setminus\{\pm L\}\bigr).
\]
\end{enumerate}
Moreover, within the class of profiles satisfying the above properties, the
profile is unique up to the natural scaling invariance
\eqref{eq:profile_scaling_invariance}.

Furthermore, for any \(T>0\), the functions \[ \omega(x,t) = (T-t)^{\widetilde c_\omega} \widetilde\Omega\!\left(\frac{x}{(T-t)^{\widetilde c_l}}\right), \qquad u(x,t) = (T-t)^{\widetilde c_l-1} \widetilde U\!\left(\frac{x}{(T-t)^{\widetilde c_l}}\right), \]\[ \theta(x,t) = (T-t)^{-2} \widetilde\Theta\!\left(\frac{x}{(T-t)^{\widetilde c_l}}\right), \]
define a solution of \eqref{eqt:gHL1} on
\(\mathbb R\times[0,T)\) with a Neumann condition, which blows up at \(t=T\). The blowup is focusing
for \(0<a<2/3\), neither focusing nor expanding for \(a=2/3\), and
expanding for \(2/3<a\leq1\).
\end{theorem}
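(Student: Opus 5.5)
We prove Theorem~\ref{theorem: main_whole_space} by working with the profile system \eqref{eqt:main} at the prescribed exponents \(c_l=2/a-3\), \(c_\omega=2-2/a\), \(c_\theta=-2\); these satisfy \(2c_\omega+2c_l=c_\theta\). The construction has three stages: a local solution near \(0\), an ODE extension to the half-line, and a verification of the Neumann condition and of the support and regularity claims.

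\emph{Step 1: local construction.} By oddness it suffices to work on \(x\ge 0\). We normalize using \eqref{eq:profile_scaling_invariance}: \(\lambda_1\) is already fixed by \(c_\theta=-2\), and we use \(\lambda_2\) to fix \(\theta_{xx}(0)=2\). Write
\[
u=x\,\phi(x),\qquad \theta=x^2\eta(x),
\]
with \(\phi(0)>0\) and \(\eta(0)=1\). Dividing the first equation of \eqref{eqt:main} by \(x\) gives
\[
(c_l+a\phi)\bigl(2\eta+x\eta_x\bigr)=(c_\theta+2\phi+2x\phi_x)\,\eta .
\]
At \(x=0\) this forces \(2(c_l+a\phi(0))=c_\theta+2\phi(0)\), that is,
\[
\phi(0)(2a-2)=-2-2c_l .
\]
The prescribed exponents are exactly those for which this compatibility relation, together with the leading-order \(\omega\) equation, admits a nondegenerate solution with \(\omega_x(0)>0\). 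We then integrate the transport equations along characteristics, using the integrating factor
\[
\exp\Bigl(\int \frac{c_\theta+2u_x}{c_lx+au}\Bigr).
\]
This expresses \(\eta\) and \(\omega\) as nonlocal functionals of \(\phi\). Since \(u_{xx}=-2\omega\), the function \(\phi\) is in turn determined by \(\omega\), so the problem reduces to a fixed-point map \(\phi\mapsto\mathcal T\phi\). We show that \(\mathcal T\) is a contraction on a small ball, in a weighted \(C^1\) space on \([0,\delta]\), centered at the Taylor polynomial. This gives a unique analytic local solution, and the uniqueness claim follows from the uniqueness of this fixed point.

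\emph{Step 2: extension.} From \(x=\delta\) the system is a regular ODE system for \((u,u_x,\omega,\theta)\) as long as the characteristic speed \(c_lx+au\) stays positive. We propagate the local monotonicity information, namely that \(\phi\) and \(\eta\) are decreasing, via differential inequalities for \((x\phi_x)\) and \((x\eta_x)\). The behavior then splits according to the sign of \(c_l\).

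For \(0<a\le 2/3\) we have \(c_l\ge 0\), so the speed stays bounded below by \(c_lx\) (or \(au\ge0\) when \(c_l=0\)) and the solution is global. The asymptotics \(\omega\sim x^{c_\omega/c_l}\) and \(\theta\sim x^{-2/c_l}\) are then read off. These give \(u_x\to0\) because \(c_\omega/c_l<-1\), which is the Neumann condition, together with \(\supp=\mathbb R\) and smoothness.

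For \(2/3<a\le1\) we have \(c_l<0\), and the speed \(c_lx+au\) reaches \(0\) at some finite \(L\). This is the point where \(\phi(L)=-c_lL/a\), and the monotonicity of \(\phi\) guarantees it is a simple zero. Near \(L\) the equations behave like
\[
\theta_x\approx \frac{\gamma}{x-L}\,\theta,
\]
and similarly for \(\omega\) with exponent \(\beta\). Hence \(\omega\sim (L-x)^\beta\) and \(\theta\sim(L-x)^\gamma\), both vanishing at \(L\). We extend by zero for \(x>L\), where \(u\) is linear; the Neumann condition is then imposed by the choice of normalization, so that \(u_x\) is constant and equal to \(0\) beyond \(L\). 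The regularity classes follow from the exponents \(\beta,\gamma\). For \(a=1\) the exponents are \(\beta=1\) and \(\gamma=2\), which yields the Lipschitz and \(C^{1,1}\) statements.

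\emph{Step 3: blowup solutions.} Substituting the profiles into the ansatz \eqref{eqt:ansatz} gives exact solutions of \eqref{eqt:gHL1}. The sign of \(c_l\) determines whether the blowup is focusing, neither focusing nor expanding, or expanding.

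The main obstacle is Step 2 in the case \(c_l<0\). There we must show that the monotonicity of \(u/x\) and \(\theta/x^2\) persists up to the degenerate point, so that the speed vanishes transversally, and that the Neumann condition is compatible with this rather than requiring tuning of an extra parameter. Without global convexity, I would attempt a continuity (bootstrap) argument on the sign of \(\phi_x\), comparing against the exact linearized behavior at the origin.
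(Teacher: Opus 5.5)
There is a genuine gap, and it sits where you flag the ``main obstacle'': you never supply the mechanism that produces the Neumann condition, and the reasons you give for it do not hold.

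First, the exponents are not selected by compatibility at the origin. With \(c_\theta=-2\), the leading-order \(\theta\)- and \(\omega\)-equations give only \(\phi(0)(1-a)=1+c_l\) and \(\theta_{xx}(0)=(2c_l+1+a\phi(0))\,\omega_x(0)\). For \(a<1\) these admit nondegenerate local solutions for every \(c_l>-1/(2-a)\); an example is \(c_l=0\), the periodic profile of Section~2. At \(a=1\) your relation reads \(0=0\), so \(u_x(0)\) is a free parameter of your local problem, and uniqueness of your fixed point no longer gives uniqueness of the profile.

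What actually selects the parameters is the once-integrated vorticity equation
\[
2(c_lx+au)\,u_{xx}=\bigl(au_x+au_x(0)+c_\theta\bigr)\bigl(u_x-u_x(0)\bigr)-4\theta .
\]
Under the Neumann condition this forces \(au_x(0)+c_\theta=0\), i.e.\ \(u_x(0)=2/a\) in your scaling. The same identity is what proves the Neumann condition for the constructed profile. Once \(au_x(0)+c_\theta=0\) holds, the equation for \(v=u_x\) becomes \(2axg\,v_x=av(v-1)-4x^2m\). This is linear in \(v\) with a forcing of fixed sign, which gives the representation \eqref{eq:v_representation2}, \(v=\sqrt{g\phi}\,(\cdots)\). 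Since \(g\to0\) at a finite endpoint and \(\phi\to0\) as \(x\to\infty\), this yields \(v\to0\).

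Your substitutes do not work:
\begin{itemize}
\item After fixing \(c_\theta=-2\), the remaining scaling leaves the endpoint value of \(u_x\) invariant, so it cannot be ``imposed by the choice of normalization''.
\item \(c_\omega/c_l<-1\) only makes \(\omega\) integrable, so \(u_x\) has a limit, not necessarily zero.
\item Your far-field and endpoint exponents \(x^{c_\omega/c_l}\), \(x^{-2/c_l}\), \((L-x)^\gamma\) are obtained by assuming \(u_x\to0\) and \(u/x\to0\), so they cannot be used to prove it.
\end{itemize}

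The extension step also asserts what has to be proved.

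For \(2/3<a\le1\) you need three things:
\begin{itemize}
\item The maximal interval is finite. The paper derives \(\limsup v\le0\) from the representation formula, then uses \((xg)_x=v+\frac1a-\frac32\) with \(\frac1a-\frac32<0\).
\item \((m,v,g)\) stays bounded, so the interval ends exactly where the speed \(axg=c_lx+au\) vanishes.
\item The zero is simple. Monotonicity of \(u/x\) does not give this; simplicity comes from \(xg_x=v-g+\frac1a-\frac32\to\frac1a-\frac32\neq0\), which again needs \(v\to0\) first.
\end{itemize}

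For \(0<a\le2/3\), the bound \(c_lx+au\ge c_lx\) presupposes \(u\ge0\). You have not shown this, and it is doubtful for small \(a\). The function \(v\) can become negative (Case~2 of Proposition~\ref{proposition: global_extension_subcritical}), and the explicit \(a=0\) profile \(u=-x+2\sqrt2\arctan(x/\sqrt2)\) is negative for large \(x\). The paper instead proves \(g>0\) by an inward-pointing argument at \(g=0\). At \(a=2/3\), the claim \(au>0\) is precisely the nontrivial content of Proposition~\ref{proposition: maximal_interval}(2).

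At \(a=1\), your own linearization at \(L\) gives \(\omega\sim(L-x)^{c_\omega/c_l}=(L-x)^0\), not \((L-x)^1\). So the \(C^{0,1}\) and \(C^{1,1}\) claims do not follow from an exponent count. The paper obtains them from the explicit profile \(u=\frac x2+\frac14\sin 2x\) together with uniqueness.

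A weighted contraction near the origin is a legitimate replacement for the paper's Schauder argument and its Gronwall uniqueness step. It only works, however, after \(u_x(0)\) has been fixed through the Neumann relation above.
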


The two main theorems show that the geometry and the parameter \(a\) determine
the support of the profile and the type of self-similar blowup.  The result is
summarized in Table~\ref{tab:main_regimes}.

\begin{table}[htbp]
\centering
\caption{Summary of the profiles and blowup regimes in the main theorems.}
\label{tab:main_regimes}
\renewcommand{\arraystretch}{1.15}
\small
\begin{tabular}{cccc}
\toprule
Geometry & Range of \(a\) & Profile support & Blowup regime \\
\midrule
\(\mathbb S^1\) 
& \(2/3<a<1\) 
& periodic 
& exact self-similar with fixed scale \\
\(\mathbb R\) 
& \(0<a<2/3\) 
& full support 
& exact self-similar with focusing \\
\(\mathbb R\) 
& \(a=2/3\) 
& full support 
& exact self-similar with fixed scale \\
\(\mathbb R\) 
& \(2/3<a\leq1\) 
& compact support 
& exact self-similar with expanding scale \\
\bottomrule
\end{tabular}
\end{table}


The remainder of the paper is organized as follows. In Section~2, we study
periodic profiles for the profile system \eqref{eqt:main} in the case
\(c_l=0\), for \(2/3<a\leq 1\). In Section~3, we construct whole-space
profiles for \eqref{eqt:main} satisfying the Neumann condition
\(u_x(\infty)=0\), for the full range \(0<a\leq1\). In this range, the
corresponding value of \(c_l\) changes sign, so the whole-space construction
covers the regimes \(c_l>0\), \(c_l=0\), and \(c_l<0\). In both sections, we
prove existence by combining a fixed-point argument with an ODE extension
analysis, and establish further properties of the profiles, including
regularity and uniqueness under prescribed normalization conditions. In
Section~4, we show how these profiles generate finite-time blowup solutions
of the evolution equation \eqref{eqt:gHL1} in the periodic and whole-space
settings. Finally, in Section~5, we present numerical simulations that
validate the profile construction and illustrate the dependence of the
profiles on the parameter \(a\).

\section{Periodic Self–Similar Profiles with $c_l=0$}
We now construct periodic solutions to the profile system \eqref{eqt:main} in the case \(c_l=0\). The parameter \(c_l=0\) leads to neither focusing nor expanding profiles. In this section we focus only on the construction and properties of the profiles. The corresponding finite-time blowup solutions of the evolution equation \eqref{eqt:gHL1} will be constructed later in Section~4.

The argument proceeds in several steps. We first reduce the profile equations to a normalized system for the auxiliary variables \(m,v,g\). We then solve this system locally near the origin by a fixed-point argument. After that, we extend the local solution using the associated ODE system and analyze its endpoint behavior. Finally, we prove regularity and uniqueness of the resulting periodic profile.

\subsection{Reduced and normalized formulation}
We begin by deriving the normalized form of the profile equations. The
normalization removes the scaling freedom of the system and fixes the leading behavior of the profile at the origin. This reduction is useful because the resulting equations can be written in terms of quantities that are regular and monotone near the origin.

With $c_l=0$, \eqref{eqt:main} reduces to
\begin{subequations}\label{eqt: case1}
\begin{align}
a u\, \theta_{x}  &= (c_{\theta}+2 u_{x})\, \theta, \label{case1a}\\
2a u\, \omega_{x} &= c_{\theta}\, \omega+2\theta_{x}, \label{case1b}\\
-u_{xx} &= 2 \omega,\qquad u(0)=0. \label{case1c}
\end{align}
\end{subequations}
By scaling and the nondegeneracy condition, we may fix $u_x(0)=1$ and $\omega_x(0)=1$. Taking $\frac{1}{x^2}$ times \eqref{case1a} and $\frac{1}{x}$ times \eqref{case1b} and letting $x\to 0$ yield
\[
c_\theta = 2(a-1)u_x(0)=2(a-1), \qquad
\theta_{xx}(0) = -\tfrac{1}{2}\!\big(c_\theta-2a\,u_x(0)\big)\,\omega_x(0) = 1.
\]
Integrating \eqref{case1b} gives
\[
2a u (u_{x}-1)_x  = \big(a u_x+a+c_{\theta}\big) (u_x-1)-4\theta.
\]
Set $g:=u/x$ and $m:=\theta/x^2$. Then $g$, $m$, and $u_x$ satisfy
\begin{equation}\label{eqt:gmu}
\begin{aligned}
axg\,m_x &= \big(2xg_x+2(1-a)(g-1)\big)\,m,\\
2axg\,u_{xx} &= \big(a u_x+3a-2\big)\,(u_x-1)-4x^2 m,\\
(xg)_x &= u_x,
\end{aligned}
\end{equation}
with $2m(0)=1$ and $g(0)=1$. 

The system \eqref{eqt:gmu} is still singular at the origin because of the factors \(x\) and \(xg\). To construct a solution, we exploit the integral structure of the equations and rewrite the problem as a fixed-point equation for \(v=u_x\). This formulation automatically incorporates the normalization at the origin and is well suited for a compactness argument.

\subsection{Fixed-point argument}
In this subsection we prove the existence of a local profile near the origin. The main point is to define a closed convex set of admissible functions that encodes the expected positivity, monotonicity, and Lipschitz bounds, and then to show that the fixed-point map preserves this set. Schauder's fixed-point theorem then yields a local solution. 

The equation \eqref{eqt:gmu} is equivalent to
\begin{equation}\label{eqt:gmu_for_fixedpoint}
\begin{aligned}
axg\,m_x &= \big(2xg_x+2(1-a)(g-1)\big)\,m,\\
2axg\,(u_x-1)_x &= \big(axg_x+ag+3a-2\big)\,(u_x-1)-4x^2 m,\\
(xg)_x &= u_x,
\end{aligned}
\end{equation}
with $2m(0)=1$ and $g(0)=1$. 
From the first line of \eqref{eqt:gmu_for_fixedpoint} we obtain
\[
m(x)=\frac{1}{2}\,g(x)^{\frac{2}{a}}
\exp\!\left(\frac{2(1-a)}{a}\int_0^x\frac{g(y)-1}{y\,g(y)}\,\mathrm{d}y\right).
\]
From the second line,
\begin{align*}
u'(x)
&= 1 - \frac{1}{a}\,x^{2-\frac{1}{a}}\,g(x)^{\frac{1}{2}}
\exp\!\left(\frac{2-3a}{2a}\int_0^x\frac{g(y)-1}{y\,g(y)}\,\mathrm{d}y\right) \\
&\quad \cdot \int_0^x y^{\frac{1}{a}-1}\,g(y)^{\frac{2}{a}-\frac{1}{2}}
\exp\!\left(\frac{2-a}{2a}\int_0^y\frac{g(z)-1}{z\,g(z)}\,\mathrm{d}z\right)\mathrm{d}y,
\end{align*}
and from the third line,
\[
g(x)=\frac{1}{x}\int_0^x u'(y)\,\mathrm{d}y,
\]
 Denote
\[
\phi(x):=\exp\!\left(\int_0^x\frac{g(y)-1}{y\,g(y)}\,\mathrm{d}y\right).
\]
Then $g$, $\phi$, and $u'$ satisfy
\begin{equation}\label{eqt:gmu_Iter}
\begin{aligned}
\phi(x) &= \exp\!\left(\int_0^x\frac{g(y)-1}{y\,g(y)}\,\mathrm{d}y\right),\\
u'(x) &= 1 - \frac{1}{a}\,x^{2-\frac{1}{a}}\,g(x)^{\frac{1}{2}}\,
\phi(x)^{\frac{2-3a}{2a}}
\int_0^x y^{\frac{1}{a}-1}\,g(y)^{\frac{4-3a}{2a}}\,
\phi(y)^{\frac{2-a}{2a}}\,\mathrm{d}y,\\
g(x) &= \frac{1}{x}\int_0^x u'(y)\,\mathrm{d}y.
\end{aligned}
\end{equation}
This representation leads naturally to a fixed–point approach for constructing self–similar profiles. The fixed-point formulation uses \(v=u_x\) as the primary unknown. Once \(v\) is constructed, \(g\), \(\phi\), and \(m\) are recovered from the integral formulas above. The following function space is tailored to the local fixed-point argument near the origin.  The normalization \(v(0)=1\) and the local form of the profile equations suggest that \(v\) should remain positive, nonincreasing on the positive side, and satisfy \(v(x)=1+O(x^2)\).  The integral formulation allows these local properties, together with a Lipschitz bound, to be propagated by the fixed-point map.  We impose these conditions only on a small interval \([-\ell,\ell]\), since the corresponding monotonicity assumptions are not expected to hold globally.  The constants \(\ell\), \(\eta\), and \(M\) are chosen precisely so that the estimates below close and \(\calR_a\) maps this fixed-point space into itself.

Let
\[
\mathbb{V}_1 := \big\{v\in C([-\ell,\ell]) : v(x)=v(-x)\big\}.
\]
Consider the closed, convex subset
\begin{equation*}
\begin{split}
\mathbb{D}_1 := \Big\{v\in \mathbb{V}_1:\ \quad &0\leq1-\eta x^2\leq v(x)\leq1\ \text{ on } [-\ell,\ell] ,\\\quad
 &v \text{ is nonincreasing on }[0,\ell] ,\quad
 |v(x)-v(y)|\leq M|x-y| \text{ for all } x,y\in[-\ell,\ell]
 \Big\},
\end{split}
\end{equation*}
where $\ell$, $\eta$ and $M$ are given by
\[
\ell:=\bigg(\frac{2}{3}\bigg)^{\frac{3a-2}{8a}}>0,\qquad \eta:=\bigg(\frac{2}{3}\bigg)^{-\frac{3a-2}{4a}}>0,\qquad \eta \ell^2=1,
\]
\[
0<M:=\frac{3(2a-1)\eta+2}{2a}\ell<+\infty.
\]
For $a\in [2/3,1]$, define the operators
\[
\calG(v)(x):=\frac{1}{x}\int_0^x v(y) \idiff y, 
\]
\[
\Phi(v)(x):=\exp\!\left(\int_0^x\frac{\calG[v](y)-1}{y\,\calG[v](y)}\,\mathrm{d}y\right),
\]
\[
\mathcal{R}_a(v)(x):=1-\frac{1}{a}\,x^{2-\frac{1}{a}}\,\calG[v](x)^{\frac{1}{2}}\,
\Phi[v](x)^{\frac{2-3a}{2a}}
\int_0^x y^{\frac{1}{a}-1}\,\calG[v](y)^{\frac{4-3a}{2a}}\,
\Phi[v](y)^{\frac{2-a}{2a}}\,\mathrm{d}y.
\]

The following lemmas show that \(\mathcal R_a\) has the compactness and continuity properties needed for Schauder's theorem. The estimates are designed to be uniform for \(a\in[2/3,1]\).

\begin{lemma}\label{lemma: g_properties}
For any $v\in \bbD_1$,
\[
1-\frac{\eta x^2}{3}\leq \calG(v)(x)\leq 1
\qquad \text{for all } x\in[0,\ell].
\]
Moreover, $\calG(v)$ is nonincreasing on $[0,\ell]$.
\end{lemma}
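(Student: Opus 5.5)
The bounds on $\calG(v)$ follow by averaging the pointwise constraints defining $\bbD_1$, and monotonicity follows from the standard fact that the running average of a nonincreasing function is nonincreasing. Fix $v\in\bbD_1$ and $x\in(0,\ell]$, and recall $\calG(v)(x)=\frac1x\int_0^x v(y)\idiff y$. At $x=0$ we interpret $\calG(v)(0)=v(0)$ by continuity. Since $v\in\bbD_1$ gives $v(0)=1$, both inequalities hold trivially there.

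\emph{Upper and lower bounds.} From $1-\eta y^2\leq v(y)\leq 1$ on $[0,x]$, integrating and dividing by $x$ gives
\[
1-\frac{\eta x^2}{3}=\frac1x\int_0^x(1-\eta y^2)\idiff y\leq \calG(v)(x)\leq \frac1x\int_0^x 1\idiff y=1 .
\]
Note also that $\eta x^2\leq \eta\ell^2=1$, so the lower bound is at least $2/3$. This yields positivity of $\calG(v)$, which is needed later for the fractional powers in $\calR_a$, though it is not part of the present statement.

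\emph{Monotonicity.} I would write $\calG(v)(x)=\int_0^1 v(sx)\idiff s$. For $0\leq x_1\leq x_2\leq \ell$ and each $s\in[0,1]$, we have $sx_1\leq sx_2$ in $[0,\ell]$. Since $v$ is nonincreasing on $[0,\ell]$, this gives $v(sx_1)\geq v(sx_2)$, and integrating in $s$ yields $\calG(v)(x_1)\geq\calG(v)(x_2)$.

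Alternatively, one can use the derivative: $\calG(v)$ is $C^1$ on $(0,\ell]$, with
\[
\frac{\diff}{\diff x}\calG(v)(x)=\frac{1}{x}\bigl(v(x)-\calG(v)(x)\bigr)=\frac{1}{x^2}\int_0^x\bigl(v(x)-v(y)\bigr)\idiff y\leq 0 .
\]
There is no real obstacle in this lemma. The only point needing care is the behavior at $x=0$, which the substitution $y=sx$ handles uniformly.
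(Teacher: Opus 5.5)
Your proposal is correct and takes essentially the same route as the paper: the two-sided bound comes from averaging $1-\eta y^2\leq v(y)\leq 1$ over $[0,x]$, and monotonicity comes from the identity $g'(x)=\frac{1}{x^2}\int_0^x\bigl(v(x)-v(y)\bigr)\idiff y\leq 0$ for $g=\calG(v)$, which you give as your alternative. Your primary argument via $\calG(v)(x)=\int_0^1 v(sx)\,\mathrm{d}s$ is a derivative-free rephrasing of the same fact, and it has the small benefit of handling $x=0$ without a separate limit.
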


\begin{proof}
Let $g=\calG(v)$. Since $1-\eta x^2\leq v(x)\leq 1$ for $x\in[0,\ell]$,
\[
1-\frac{\eta x^2}{3}
\leq g(x)=\frac1x\int_0^x v(y)\idiff y
\leq 1.
\]
Moreover,
\[
g'(x)=\left(\frac1x\int_0^x v(y)\idiff y\right)'
=\frac{1}{x^2}\int_0^x\!\big(v(x)-v(y)\big)\idiff y\leq0,
\]
since $v$ is nonincreasing. Hence $g$ is nonincreasing on $[0,\ell]$.
\end{proof}

\begin{lemma}\label{lemma: phi_properties}
For any $v\in \bbD_1$,
\[
\sqrt{1-\frac{\eta x^2}{3}}\leq \Phi(v)(x)\leq 1
\qquad \text{for all } x\in[0,\ell].
\]
Moreover, $\Phi(v)$ is nonincreasing on $[0,\ell]$.
\end{lemma}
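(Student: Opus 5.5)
We set \(g=\calG(v)\) and rely on Lemma~\ref{lemma: g_properties}, which gives \(1-\eta y^2/3\leq g(y)\leq 1\) and \(g\) nonincreasing on \([0,\ell]\). Since \(\eta\ell^2=1\), we also have \(g(y)\geq 2/3>0\) on \([0,\ell]\), so the integrand \((g(y)-1)/(y\,g(y))\) is well defined away from \(y=0\).

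\emph{Integrability at the origin.} We have \(|g(y)-1|\leq \eta y^2/3\), so
\[
\left|\frac{g(y)-1}{y\,g(y)}\right|\leq \frac{\eta y}{3g(y)}\leq \frac{\eta y}{2}.
\]
Hence the integrand is bounded near \(0\), and \(\Phi(v)\) is well defined and continuous, with \(\Phi(v)(0)=1\).

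\emph{Monotonicity and upper bound.} Because \(g\leq 1\) and \(g>0\), the integrand \((g(y)-1)/(y\,g(y))\) is nonpositive on \((0,\ell]\). The exponent \(\int_0^x\) of this integrand is therefore nonincreasing in \(x\), and consequently \(\Phi(v)\) is nonincreasing on \([0,\ell]\). The same sign gives \(\Phi(v)(x)\leq e^0=1\).

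\emph{Lower bound.} We need \(\log\Phi(v)(x)\geq \tfrac12\log\bigl(1-\eta x^2/3\bigr)\). Since \(g\) is nonincreasing and \(g\leq1\), the function \(t\mapsto (t-1)/t=1-1/t\) is increasing in \(t\), so we may replace \(g(y)\) by the lower bound \(h(y):=1-\eta y^2/3\):
\[
\frac{g(y)-1}{y\,g(y)}\geq \frac{h(y)-1}{y\,h(y)}=\frac{-\eta y/3}{1-\eta y^2/3}.
\]
Integrating from \(0\) to \(x\) gives exactly
\[
\int_0^x \frac{-\eta y/3}{1-\eta y^2/3}\idiff y=\tfrac12\log\bigl(1-\eta x^2/3\bigr).
\]
Exponentiating yields \(\Phi(v)(x)\geq\sqrt{1-\eta x^2/3}\).

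The only delicate point is the monotonicity of \(t\mapsto 1-1/t\) for \(t>0\), which justifies inserting the pointwise lower bound for \(g\). Everything else is a direct integration.
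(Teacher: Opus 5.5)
Your proof is correct and follows the paper's argument. The upper bound and monotonicity come from the sign of $(g-1)/(y\,g)$, and the lower bound comes from inserting $g\geq 1-\eta y^2/3$ via the monotonicity of $t\mapsto 1-1/t$ and integrating explicitly to $\tfrac12\log(1-\eta x^2/3)$; your added check that the integrand is $O(y)$ near the origin (using $g\geq 2/3$) is a detail the paper leaves implicit.
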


\begin{proof}
Let $g=\calG(v)$ and $\phi=\Phi(v)$. By Lemma~\ref{lemma: g_properties},
\[
1-\frac{\eta x^2}{3}\leq g(x)\leq 1.
\]
From the definition
\[
\phi(x)=\exp\!\left(\int_0^x \frac{1}{y}\!\left(1-\frac{1}{g(y)}\right)\!dy\right),
\]
we immediately obtain $\phi(x)\leq1$. Using $g(y)\geq 1-\frac{\eta y^2}{3}$ gives
\[
\int_0^x \frac{1}{y}\!\left(1-\frac{1}{g(y)}\right)dy
\geq \frac12\log\!\left(1-\frac{\eta x^2}{3}\right),
\]
which yields $\phi(x)\geq \sqrt{1-\frac{\eta x^2}{3}}$.

Finally,
\[
\phi'(x)=\phi(x)\frac{1}{x}\!\left(1-\frac{1}{g(x)}\right)\leq0
\]
since $g(x)\leq1$. Hence $\phi$ is nonincreasing on $[0,\ell]$.
\end{proof}

\begin{lemma}\label{lemma: r_properties}
Let $a\in[2/3,1]$ and $v\in\bbD_1$. Then for all $x\in[0,\ell]$,
\[
1-\eta x^2 \leq \calR_a(v)(x)\leq 1-x^2\Big(1-\frac{\eta x^2}{3}\Big)^{\frac{10-5a}{4a}} \leq 1.
\]
Moreover, $\calR_a(v)$ is nonincreasing on $[0,\ell]$.
\end{lemma}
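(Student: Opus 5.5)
The plan is to write
\[
1-\calR_a(v)(x)=\frac1a\,x^{2}\,g(x)^{\frac12}\,\phi(x)^{\frac{2-3a}{2a}}\,h(x),
\qquad
h(x):=x^{-\frac1a}\int_0^x y^{\frac1a-1}F(y)\idiff y,
\qquad
F:=g^{\frac{4-3a}{2a}}\phi^{\frac{2-a}{2a}},
\]
with $g=\calG(v)$ and $\phi=\Phi(v)$. I would then combine the bounds of Lemmas~\ref{lemma: g_properties} and~\ref{lemma: phi_properties} with the signs of the exponents. For $a\in[2/3,1]$ we have $\frac{2-3a}{2a}\leq 0$, while $\frac{4-3a}{2a}$ and $\frac{2-a}{2a}$ are positive. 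Write $s:=1-\frac{\eta x^2}{3}$. On $[0,\ell]$ we have $\eta x^2\leq\eta\ell^2=1$, so $s\in[2/3,1]$.

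For the upper bound on $\calR_a(v)$, I will bound the product from below. Since $F$ is nonincreasing (both $g$ and $\phi$ are), we have $F(y)\geq F(x)\geq s^{\frac{4-3a}{2a}+\frac{2-a}{4a}}=s^{\frac{10-7a}{4a}}$, and hence $h(x)\geq a\,s^{\frac{10-7a}{4a}}$. Next, $g^{1/2}\geq s^{1/2}$, and $\phi^{\frac{2-3a}{2a}}\geq1$ because $\phi\leq1$ and the exponent is nonpositive. Multiplying these gives $1-\calR_a(v)\geq x^2 s^{\frac{10-5a}{4a}}$, which is the middle inequality. The final inequality $\leq1$ is trivial.

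For the lower bound, I use $g\leq1$ and $F\leq1$, which give $h\leq a$. I also use $\phi^{\frac{2-3a}{2a}}\leq s^{\frac{2-3a}{4a}}\leq(2/3)^{-\frac{3a-2}{4a}}=\eta$. Together these yield $1-\calR_a(v)\leq\eta x^2$. This is exactly why $\eta$ was chosen as it was.

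The main obstacle is monotonicity. The factor $g^{1/2}$ and the averaged factor $h$ both decrease, so I need to show that the quantity $Q:=x^2g^{1/2}\phi^{p}h$, with $p=\frac{2-3a}{2a}$, is nondecreasing. I would do this by a logarithmic derivative,
\[
\frac{Q'}{Q}=\frac2x+\frac{g'}{2g}+p\frac{\phi'}{\phi}+\frac{h'}{h},
\]
and estimate the four terms separately.
\begin{itemize}
\item The term $p\phi'/\phi=p\,(1-1/g)/x$ is nonnegative.
\item Since $g'=(v-g)/x$, $v\geq1-\eta x^2$ and $g\leq1$, we get $g'\geq-\eta x$. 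Using $g\geq2/3$, this gives $g'/(2g)\geq-\tfrac{3\eta x}{4}\geq-\tfrac{3}{4x}$.
\item Differentiating the definition of $h$ gives $h'/h=\bigl(F(x)/h-1/a\bigr)/x$. Since $h\leq a$, this is at least $(F(x)-1)/(ax)$. By Bernoulli's inequality, $1-F(x)\leq\max(q,1)\,\eta x^2/3\leq\tfrac23$ with $q=\tfrac{10-7a}{4a}\leq2$. So $h'/h\geq-\tfrac{2}{3ax}\geq-\tfrac1x$.
\end{itemize}
Summing these, $Q'/Q\geq(2-\tfrac34-1)/x>0$. Hence $1-\calR_a(v)$ is nondecreasing, and so $\calR_a(v)$ is nonincreasing on $[0,\ell]$. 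If the constants turn out tighter than expected, I would refine the estimate of $h'/h$ using the sharper bound on $F(0)-F(x)$.
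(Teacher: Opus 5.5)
Your proposal is correct. The two bounds follow the paper's argument. For the upper bound you pull out $F(x)$ using the monotonicity of $g$ and $\phi$ and discard $\phi^{(2-3a)/(2a)}\geq 1$, which is exactly what produces the exponent $\frac{10-5a}{4a}$. The lower bound via $\phi^{(2-3a)/(2a)}\leq s^{(2-3a)/(4a)}\leq\eta$ is the paper's argument, except that the paper evaluates at $\phi(\ell)$ rather than $\phi(x)$.

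Your monotonicity argument also checks out: each of the three estimates holds and they sum to at least $1/(4x)$. It is, however, organized differently from the paper's. The paper differentiates $r=\calR_a(v)$ directly. Using $g'=(v-g)/x$ and $\phi'/\phi=(g-1)/(xg)$, the logarithmic derivative of the prefactor $x^{2-1/a}g^{1/2}\phi^{(2-3a)/(2a)}$ collapses exactly to $\frac{av+3a-2}{2axg}$. This gives
\[
r'(x)=\frac{av(x)+3a-2}{2axg(x)}\bigl(r(x)-1\bigr)-\frac1a\,x\,g(x)^{\frac{2-a}{a}}\phi(x)^{\frac{2(1-a)}{a}},
\]
so $r$ satisfies the linear ODE from which the integrating factor was built. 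Both terms are $\leq 0$ as soon as $v\geq 0$ and $a\geq 2/3$.

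In your notation this identity reads $Q'/Q=\frac{av+3a-2}{2axg}+\frac{F(x)}{x\,h(x)}$, a sum of two nonnegative terms. By splitting it into four pieces and bounding each separately, you lose this cancellation. You then have to spend quantitative information instead: $v\geq 1-\eta x^2$, $\eta x^2\leq 1$, $g\geq 2/3$, $q\leq 2$ and $2/(3a)\leq 1$. These happen to close with room to spare.

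The paper's route is shorter and needs only the sign of $v$. Its derivative formula is also reused immediately to obtain the Lipschitz constant $M$ in Lemma~\ref{lemma: r_Lipschitz}. Your estimates work without recognizing the ODE structure, but they would not carry over as directly to that next lemma.
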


\begin{proof}
Denote $g=\calG(v)$, $\phi=\Phi(v)$ and $r=\calR_a(v)$.  
By Lemmas~\ref{lemma: g_properties}--\ref{lemma: phi_properties},
\[
1-\frac{\eta x^2}{3}\leq g(x)\leq 1,
\qquad
\sqrt{1-\frac{\eta x^2}{3}}\leq \phi(x)\leq 1,
\]
and both $g$ and $\phi$ are nonincreasing on $[0,\ell]$.

\medskip
\noindent
\textit{Upper bound.}
For $0\leq y\leq x$, $g(y)\geq g(x)$ and $\phi(y)\geq\phi(x)$. Since $\frac{4-3a}{2a},\frac{2-a}{2a}\geq0$,
\[
g(y)^{\frac{4-3a}{2a}}\phi(y)^{\frac{2-a}{2a}}
\geq g(x)^{\frac{4-3a}{2a}}\phi(x)^{\frac{2-a}{2a}}.
\]
Hence
\[
\begin{aligned}
r(x)
&\leq 1-\frac1a x^{2-\frac1a} g(x)^{\frac12}\phi(x)^{\frac{2-3a}{2a}}
\int_0^x y^{\frac1a-1} g(x)^{\frac{4-3a}{2a}}\phi(x)^{\frac{2-a}{2a}}\idiff y \\
&=1-x^2 g(x)^{\frac2a-1}\phi(x)^{\frac{2(1-a)}{a}} \\
&\leq 1-x^2\Big(1-\frac{\eta x^2}{3}\Big)^{\frac{10-5a}{4a}}
\leq 1 .
\end{aligned}
\]

\medskip
\noindent
\textit{Lower bound.}
Using $g(x)\leq1$ and $\phi(\ell)\leq \phi(x)\leq1$,
\[
\begin{aligned}
r(x)
&\geq 1-\frac1a x^{2-\frac1a}\phi(\ell)^{\frac{2-3a}{2a}}
\int_0^x y^{\frac1a-1}\idiff y
=1-x^2\phi(\ell)^{\frac{2-3a}{2a}}.
\end{aligned}
\]
Since $\phi(\ell)\geq \sqrt{1-\frac{\eta\ell^2}{3}}$ and by the definitions of $\ell$ and $\eta$,
\[
\phi(\ell)^{\frac{2-3a}{2a}}
\leq\Big(1-\frac{\eta\ell^2}{3}\Big)^{\frac{2-3a}{4a}}
=\eta,
\]
we obtain $r(x)\geq 1-\eta x^2$.

\medskip
\noindent
\textit{Monotonicity.}
Differentiating gives
\[
r'(x)=
\frac{av(x)+3a-2}{2axg(x)}(r(x)-1)
-\frac1a x\,g(x)^{\frac{2-a}{a}}\phi(x)^{\frac{2(1-a)}{a}}\leq0,
\]
since $v(x)\geq0$, $g(x)>0$, and $r(x)\leq1$. Hence $r$ is nonincreasing on $[0,\ell]$.
\end{proof}

\begin{lemma}\label{lemma: r_Lipschitz}
Let $a\in[2/3,1]$. Then $\calR_a(f)(x)$ is Lipschitz on $[-\ell,\ell]$. In particular,
\[
|\calR_a(f)(x)-\calR_a(f)(y)| \leq M |x-y|
\quad \text{for all } x,y\in[-\ell,\ell].
\]
\end{lemma}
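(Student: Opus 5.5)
The plan is to bound the derivative of \(r:=\calR_a(f)\) pointwise on \((0,\ell]\) by \(M\), then pass to the whole interval \([-\ell,\ell]\) by evenness and the mean value theorem. Throughout, \(f\in\bbD_1\), as in the preceding lemmas, so that Lemmas~\ref{lemma: g_properties}--\ref{lemma: r_properties} apply; write \(g=\calG(f)\) and \(\phi=\Phi(f)\).

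\emph{Step 1: evenness.} Since \(f\) is even, \(g(x)=\frac1x\int_0^x f\) is even. The integrand \(\frac{g(y)-1}{y\,g(y)}\) is then odd in \(y\), so \(\phi\) is even. Interpreting the powers \(x^{2-\frac1a}\) and \(y^{\frac1a-1}\) through \(|x|\) (equivalently, defining \(\calR_a(f)\) on \([-\ell,0]\) by even reflection, consistent with \(\mathbb V_1\)), \(r\) is even. It therefore suffices to show \(|r(x)-r(y)|\le M|x-y|\) for \(x,y\in[0,\ell]\); the case of opposite signs then follows from \(|r(x)-r(y)|=\bigl|r(|x|)-r(|y|)\bigr|\le M\bigl||x|-|y|\bigr|\le M|x-y|\).

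\emph{Step 2: derivative bound.} On \((0,\ell]\), \(r\) is \(C^1\), and the proof of Lemma~\ref{lemma: r_properties} gives
\[
r'(x)=\frac{af(x)+3a-2}{2axg(x)}\bigl(r(x)-1\bigr)-\frac1a\,x\,g(x)^{\frac{2-a}{a}}\phi(x)^{\frac{2(1-a)}{a}}.
\]
Since \(a\ge 2/3\) and \(f\ge0\), both terms are nonpositive. Hence \(|r'(x)|\) is the sum of their absolute values, and I will bound each term separately.

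For the first term, use \(f\le1\), so that \(af+3a-2\le 4a-2=2(2a-1)\). Lemma~\ref{lemma: r_properties} gives \(0\le 1-r(x)\le \eta x^2\). Lemma~\ref{lemma: g_properties} gives \(g(x)\ge 1-\eta x^2/3\ge 2/3\), because \(\eta\ell^2=1\). Together these bound the first term by
\[
\frac{2(2a-1)\,\eta x^2}{2a\,x\cdot\frac23}=\frac{3(2a-1)\eta}{2a}\,x .
\]
For the second term, use \(g\le1\) and \(\phi\le1\), whose exponents are nonnegative, to bound it by \(x/a=\frac{2}{2a}x\). Adding the two bounds,
\[
|r'(x)|\le \frac{3(2a-1)\eta+2}{2a}\,x\le \frac{3(2a-1)\eta+2}{2a}\,\ell=M .
\]

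\emph{Step 3: conclusion and the delicate point.} The only delicate point is the endpoint \(x=0\), where the formula for \(r'\) is singular. Here \(r\) is continuous on \([0,\ell]\), because \(0\le 1-r(x)\le\eta x^2\to0\). For \(0<y<x\le\ell\), the mean value theorem on \([y,x]\) gives \(|r(x)-r(y)|\le M|x-y|\). Letting \(y\to0^+\) and using continuity extends this to \(y=0\). Combined with Step~1, this yields the Lipschitz bound on all of \([-\ell,\ell]\).

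I expect the main obstacle to be the first term, which carries the apparent singular factor \(1/x\). It is tamed only by the quadratic lower bound \(r\ge 1-\eta x^2\) from Lemma~\ref{lemma: r_properties}, together with the uniform lower bound \(g\ge 2/3\), which comes from the choice \(\eta\ell^2=1\). This is exactly what produces the constant \(3(2a-1)\eta/(2a)\) appearing in \(M\).
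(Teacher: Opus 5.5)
Your proof is correct and follows the paper's argument. Both use the derivative formula from Lemma~\ref{lemma: r_properties}, bound the first term via $af+3a-2\le 4a-2$, $1-r\le\eta x^2$ and $g\ge g(\ell)\ge 2/3$, and bound the second term via $g,\phi\le 1$, giving $|r'(x)|\le\frac{3(2a-1)\eta+2}{2a}\,x\le M$.

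Your additional steps make explicit details the paper leaves implicit: the evenness reduction, the mean-value/continuity argument at $x=0$, and the bound $g(\ell)\ge 2/3$ that the paper uses without stating it. One wording point: the evenness step should be read as even reflection, since literally replacing $x,y$ by $|x|,|y|$ in the powers would make the correction term odd rather than even.
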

\begin{proof}
Denote $g=\calG(v)$, $\phi=\Phi(v)$, and $r=\calR_a(v)$.  
By Lemma~\ref{lemma: r_properties},
\[
r'(x)=
\frac{av(x)+3a-2}{2axg(x)}(r(x)-1)
-\frac1a x\,g(x)^{\frac{2-a}{a}}\phi(x)^{\frac{2(1-a)}{a}} .
\]
Since $r(x)\leq 1$, we obtain
\[
|r'(x)|
=
\frac{av(x)+3a-2}{2axg(x)}(1-r(x))
+\frac1a x\,g(x)^{\frac{2-a}{a}}\phi(x)^{\frac{2(1-a)}{a}} .
\]
Using $v(x)\leq v(0)$, $g(x)\geq g(\ell)$, and $1-r(x)\leq \eta x^2$, we estimate
\[
|r'(x)|
\leq
\frac{av(0)+3a-2}{2axg(\ell)}\,\eta x^2
+\frac1a x\,g(0)^{\frac{2-a}{a}}\phi(0)^{\frac{2(1-a)}{a}} .
\]
Since $v(0)=g(0)=\phi(0)=1$, this gives
\[
|r'(x)|
\leq
\frac{3(2a-1)\eta+2}{2a}\,x
\leq
\frac{3(2a-1)\eta+2}{2a}\,\ell
=M .
\]
Hence $\calR_a(v)$ is Lipschitz on $[-\ell,\ell]$ with Lipschitz constant $M$.
\end{proof}

\begin{proposition}\label{proposition: r_selfmap}
        Let $a\in[2/3,1]$. $\calR_a$ maps $\bbD_1$ into itself.
\end{proposition}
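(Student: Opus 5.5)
The plan is to check, for fixed $v\in\bbD_1$, each defining condition of $\bbD_1$ for $r:=\calR_a(v)$. Most of the work is already done in Lemmas~\ref{lemma: g_properties}--\ref{lemma: r_Lipschitz}; what remains is to set up the objects correctly on all of $[-\ell,\ell]$ and assemble the bounds.

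\textbf{Well-definedness and evenness.} First I will make sure $r$ is well defined and continuous on $[-\ell,\ell]$. The formula for $\calR_a$ is written for $x\geq 0$, so I extend it to negative $x$ by evenness, $r(-x):=r(x)$. This is consistent with the odd symmetry of $u$: since $v$ is even, $\calG(v)$ is even as well, because
\[
\frac1{-x}\int_0^{-x}v=\frac1x\int_0^x v.
\]
By Lemma~\ref{lemma: g_properties}, $\calG(v)\geq 1-\eta x^2/3\geq 2/3>0$ on $[0,\ell]$, using $\eta\ell^2=1$. Hence the integrand $(g-1)/(yg)$ is bounded by $\frac{\eta y/3}{2/3}$ and is integrable near $0$, so $\Phi(v)$ is well defined, continuous, and lies between positive bounds by Lemma~\ref{lemma: phi_properties}. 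The inner integrand $y^{1/a-1}(\cdots)$ is integrable since $1/a-1>-1$. The outer factor $x^{2-1/a}$, with $2-1/a\geq 1/2>0$ for $a\geq 2/3$, combines with the inner integral, which is $O(x^{1/a})$, to give an $O(x^2)$ correction. Thus $r$ is continuous on $[0,\ell]$ with $r(0)=1$, and its even extension lies in $\mathbb V_1$.

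\textbf{Pointwise bounds and monotonicity.} Lemma~\ref{lemma: r_properties} directly gives $1-\eta x^2\leq r(x)\leq 1$ on $[0,\ell]$, together with the nonincrease of $r$ on $[0,\ell]$. The remaining inequality $0\leq 1-\eta x^2$ holds on $[-\ell,\ell]$ precisely because $\eta\ell^2=1$. By evenness, the two-sided bounds transfer to $[-\ell,0]$.

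\textbf{Lipschitz bound (main obstacle).} The delicate step is the bound $|r(x)-r(y)|\leq M|x-y|$ on all of $[-\ell,\ell]$. Lemma~\ref{lemma: r_Lipschitz} supplies $|r'(x)|\leq M$ for $x\in(0,\ell]$, through the derivative formula
\[
r'(x)=\frac{av(x)+3a-2}{2axg(x)}(r(x)-1)-\frac1a x\,g(x)^{\frac{2-a}{a}}\phi(x)^{\frac{2(1-a)}{a}}.
\]
To make this rigorous I need $r$ to be $C^1$ on $(0,\ell]$. This follows because $g$ and $\phi$ are $C^1$ there (as $v$ is continuous) and the formula for $r$ is a product of differentiable factors.

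At $x=0$ I will use the bound $|r'(x)|\leq Mx/\ell\to 0$. This shows $r$ is Lipschitz with constant $M$ on $[0,\ell]$, by the mean value theorem applied on $[\epsilon,\ell]$ and continuity as $\epsilon\to0$. By evenness the same holds on $[-\ell,0]$. For $x<0<y$, I split at the origin:
\[
|r(x)-r(y)|\leq |r(x)-r(0)|+|r(0)-r(y)|\leq M(|x|+y)=M|x-y|.
\]
Together with the previous steps, this verifies every condition in the definition of $\bbD_1$, so $\calR_a(\bbD_1)\subset\bbD_1$.
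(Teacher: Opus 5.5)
Your proposal is correct and follows the paper's route. The paper's proof simply cites Lemmas~\ref{lemma: r_properties} and~\ref{lemma: r_Lipschitz}, and you assemble exactly those bounds. Your additional steps make explicit what the paper leaves implicit: defining $\calR_a(v)$ on $[-\ell,0)$ by evenness, checking continuity with $r(0)=1$, and gluing the Lipschitz bound across the origin via $|r(x)-r(y)|\leq M(|x|+|y|)$.
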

\begin{proof}
    This proposition follows from Lemmas~\ref{lemma: r_properties}--\ref{lemma: r_Lipschitz}.
\end{proof}

\begin{proposition}\label{proposition: r_continuity}
    Let $a\in[2/3,1]$. $\calR_a:\bbD_1\to\bbD_1$ is continuous with respect to the $L^\infty$-norm.
\end{proposition}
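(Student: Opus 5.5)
We show continuity by propagating $L^\infty$ perturbations through the three operators $\calG$, $\Phi$, and the outer formula defining $\calR_a$. Throughout we use the uniform bounds on $\bbD_1$ from Lemmas~\ref{lemma: g_properties}--\ref{lemma: phi_properties}. Since $\eta\ell^2=1$, for $x\in[0,\ell]$ we have
\[
\tfrac23\leq \calG(v)(x)\leq 1,\qquad \sqrt{2/3}\leq\Phi(v)(x)\leq 1 .
\]
So every power $\calG(v)^{p}$ and $\Phi(v)^{q}$ appearing in $\calR_a$ is a Lipschitz function of its base on a fixed compact interval bounded away from zero. Its Lipschitz constant depends only on $a\in[2/3,1]$. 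By evenness it suffices to work on $[0,\ell]$.

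\emph{Step 1.} Let $v_1,v_2\in\bbD_1$ and $\delta:=\|v_1-v_2\|_{L^\infty}$, and write $g_i=\calG(v_i)$. Directly from the averaging definition, $|g_1(x)-g_2(x)|\leq \delta$.

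\emph{Step 2, the main obstacle.} The phase $\Phi$ involves the singular weight $1/y$, so we cannot simply bound $\int_0^x \delta/y\,dy$. We use the quadratic vanishing near the origin instead. Put $h(g):=(g-1)/g$, which has Lipschitz constant at most $9/4$ on $[2/3,1]$. On the one hand, $|h(g_i(y))|\leq \tfrac32\cdot\frac{\eta y^2}{3}$. On the other hand, $|h(g_1(y))-h(g_2(y))|\leq \tfrac94\delta$. Fix $\epsilon\in(0,\ell)$ and split the integral at $\epsilon$:
\[
\int_0^x\frac{|h(g_1)-h(g_2)|}{y}\,dy\leq \int_0^{\epsilon}\eta y\,dy+\frac94\,\delta\log\frac{\ell}{\epsilon}
=\frac{\eta\epsilon^2}{2}+\frac94\,\delta\log\frac{\ell}{\epsilon}.
\]
Choosing $\epsilon=\sqrt{\delta}$ (or simply letting $\delta\to0$ and then $\epsilon\to0$) makes this $o(1)$ uniformly in $x$. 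Because the exponents are bounded and $e^t$ is Lipschitz on bounded sets, $\|\Phi(v_1)-\Phi(v_2)\|_{L^\infty}\to0$ as $\delta\to 0$.

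\emph{Step 3.} Write $\calR_a(v)=1-\frac1a x^{2-1/a}A(x)\int_0^x y^{1/a-1}B(y)\,dy$, where
\[
A=\calG(v)^{1/2}\Phi(v)^{\frac{2-3a}{2a}},\qquad B=\calG(v)^{\frac{4-3a}{2a}}\Phi(v)^{\frac{2-a}{2a}} .
\]
$A$ and $B$ are bounded, and by Steps~1--2 they depend continuously in $L^\infty$ on $v$. Using $ab-a'b'=(a-a')b+a'(b-b')$, we get
\[
|\calR_a(v_1)-\calR_a(v_2)|(x)\leq \tfrac1a x^{2-1/a}\Big(\|A_1-A_2\|_\infty\|B_2\|_\infty+\|A_1\|_\infty\|B_1-B_2\|_\infty\Big)\int_0^x y^{1/a-1}dy,
\]
and the right-hand side equals $x^2\,(\cdots)\leq \ell^2(\cdots)\to0$. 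The weight $y^{1/a-1}$ is integrable and combines with $x^{2-1/a}$ into $x^2$, so this step raises no integrability issue. Hence $\calR_a$ is continuous in the $L^\infty$-norm on $\bbD_1$, and by Proposition~\ref{proposition: r_selfmap} its image lies in $\bbD_1$.
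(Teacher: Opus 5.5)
Your proposal is correct and follows essentially the same route as the paper. You bound the $\calG$ perturbation by $\delta$, handle the singular $1/y$ integral in $\Phi$ by splitting at $\sqrt{\delta}$ using the quadratic vanishing of $g-1$ near the origin, and control the outer formula via Lipschitz power maps on fixed compact intervals plus the product splitting. The differences are cosmetic: you use the uniform bound $\calG(v)\geq 2/3$ on all of $\bbD_1$ rather than shrinking $\delta$, and you pair $x^{2-1/a}$ with $\int_0^x y^{1/a-1}\,dy$ to obtain the explicit factor $x^2$.
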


\begin{proof}
Fix $v_0\in\bbD_1$ and let $v\in\bbD_1$ with $\|v-v_0\|_{L^\infty}\leq \delta$.
Denote $g_0=\calG(v_0)$, $\phi_0=\Phi(v_0)$, $r_0=\calR_a(v_0)$ and $g,\phi,r$ similarly for $v$.

\medskip
\noindent
\textit{Control of $g$ and $\phi$ near $v_0$.}
From the definition of $\calG$,
\[
\|g-g_0\|_{L^\infty}\leq \|v-v_0\|_{L^\infty}\leq \delta.
\]
By Lemma~\ref{lemma: g_properties}, $g_0(x)\geq 1-\frac{\eta\ell^2}{3}=\frac23$ on $[0,\ell]$; hence for $\delta\leq 1/6$,
\[
g(x),g_0(x)\in\Big[\frac12,1\Big]\quad\text{on }[0,\ell].
\]
Using $\frac12\leq g,g_0\leq 1$ and
\[
\frac{g(y)-1}{y\,g(y)}-\frac{g_0(y)-1}{y\,g_0(y)}
=\frac{g(y)-g_0(y)}{y\,g(y)\,g_0(y)},
\]
we obtain
\[
\left|\frac{g(y)-1}{y\,g(y)}-\frac{g_0(y)-1}{y\,g_0(y)}\right|
\leq \frac{4}{y}\,|g(y)-g_0(y)|.
\]
On the other hand, by Lemma~\ref{lemma: g_properties},
\[
|g(y)-1|,\,|g_0(y)-1|\leq \frac{\eta}{3}y^2,
\]
so
\[
|g(y)-g_0(y)|\leq |g(y)-1|+|g_0(y)-1|
\leq \frac{2\eta}{3}y^2,
\]
and hence
\[
\left|\frac{g(y)-1}{y\,g(y)}-\frac{g_0(y)-1}{y\,g_0(y)}\right|
\leq C y .
\]
Let $\delta=\|g-g_0\|_{L^\infty}$. For any $x\in[0,\ell]$,
split the integral at $y=\sqrt{\delta}$ as follows.
For the first part, using the bound $Cy$,
\[
\left|\int_0^{\min(x,\sqrt{\delta})}\!\!\!\left(\frac{g-1}{yg}-\frac{g_0-1}{yg_0}\right)\idiff y\right|
\leq C\int_0^{\sqrt{\delta}} y\idiff y
\leq C\delta .
\]
For the second part, using $|g-g_0|\leq\delta$,
\[
\left|\int_{\sqrt{\delta}}^{x}\!\!\left(\frac{g-1}{yg}-\frac{g_0-1}{yg_0}\right)\idiff y\right|
\leq 4\delta\int_{\sqrt{\delta}}^{\ell}\frac{\diff y}{y}
\leq C\delta |\log\delta|.
\]
Combining the two bounds gives
\[
\sup_{x\in[0,\ell]}
\left|\int_0^x\left(\frac{g(y)-1}{y\,g(y)}-\frac{g_0(y)-1}{y\,g_0(y)}\right)\idiff y\right|
\leq C(\delta+\delta|\log\delta|)
\lesssim \sqrt{\delta},
\]
for $\delta$ sufficiently small.
Therefore,
\[
\|\phi-\phi_0\|_{L^\infty([0,\ell])}\;\lesssim\;\sqrt{\delta}.
\]

\medskip
\noindent
\textit{Continuity of $r=\calR_a(v)$.}
Write $r=1-\frac1a\,A\,B$ where
\[
A(x)=x^{2-\frac1a}\,g(x)^{\frac12}\,\phi(x)^{\frac{2-3a}{2a}},\qquad
B(x)=\int_0^x y^{\frac1a-1}g(y)^{\frac{4-3a}{2a}}\phi(y)^{\frac{2-a}{2a}}\idiff y,
\]
and define $A_0,B_0$ analogously.
Since $g,g_0,\phi,\phi_0$ stay in fixed compact intervals (by Lemmas~\ref{lemma: g_properties}--\ref{lemma: phi_properties}
and the above neighborhood choice), the power maps are Lipschitz there, hence
\[
\|A-A_0\|_{L^\infty}\;\lesssim\;\|g-g_0\|_{L^\infty}+\|\phi-\phi_0\|_{L^\infty}
\;\lesssim\;\delta+\sqrt{\delta}.
\]
Also, since $1/a-1\in[0,1/2]$, we have $\int_0^\ell y^{\frac1a-1}\idiff y<\infty$, and similarly
\[
\|B-B_0\|_{L^\infty}\leq \int_0^\ell y^{\frac1a-1}\,
\Big|g^{\frac{4-a}{2a}}\phi^{\frac{2-a}{2a}}-g_0^{\frac{4-a}{2a}}\phi_0^{\frac{2-a}{2a}}\Big|\idiff y
\;\lesssim\; \delta+\sqrt{\delta}.
\]
Finally,
\[
\|r-r_0\|_{L^\infty}
\leq \frac1a\Big(\|A-A_0\|_{L^\infty}\|B\|_{L^\infty}+\|A_0\|_{L^\infty}\|B-B_0\|_{L^\infty}\Big)
\;\lesssim\;\delta+\sqrt{\delta}\xrightarrow[]{\delta\to0}0.
\]
Thus $\calR_a$ is continuous at $v_0$, and since $v_0$ is arbitrary, $\calR_a$ is continuous on $\bbD_1$.
\end{proof}

\begin{lemma}\label{lemma: D_compactness}
    Let $a\in[2/3,1]$. The set $\bbD_1$ is compact with respect to the $L^\infty$-norm.
\end{lemma}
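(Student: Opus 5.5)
The compactness will follow from the Arzel\`a--Ascoli theorem together with the closedness of $\bbD_1$ in $C([-\ell,\ell])$. First, I will show that $\bbD_1$ is uniformly bounded. Every $v\in\bbD_1$ satisfies $0\leq 1-\eta x^2\leq v(x)\leq 1$ on $[-\ell,\ell]$; recall $\eta\ell^2=1$, so the lower bound is nonnegative. Hence $\|v\|_{L^\infty}\leq 1$.

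Second, $\bbD_1$ is equicontinuous, since every element is Lipschitz with the same constant $M$, which is finite for $a\in[2/3,1]$. By Arzel\`a--Ascoli, $\bbD_1$ is therefore relatively compact in $C([-\ell,\ell])$ equipped with the sup norm.

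It remains to check that $\bbD_1$ is closed under uniform convergence. Take $v_n\in\bbD_1$ with $v_n\to v$ uniformly. Each defining condition is a family of non-strict pointwise inequalities, so each passes to the pointwise limit:
\begin{itemize}
\item[(i)] evenness, $v_n(x)=v_n(-x)$;
\item[(ii)] the two-sided bound $1-\eta x^2\leq v_n(x)\leq 1$;
\item[(iii)] monotonicity on $[0,\ell]$, i.e.\ $v_n(x)\geq v_n(y)$ for $0\leq x\leq y\leq \ell$;
\item[(iv)] the Lipschitz bound $|v_n(x)-v_n(y)|\leq M|x-y|$.
\end{itemize}
The limit $v$ is continuous as a uniform limit of continuous functions, so $v\in\mathbb V_1$. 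Hence $v\in\bbD_1$.

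A closed, relatively compact subset of a complete metric space is compact, which gives the claim. There is no real obstacle here; the only point to state explicitly is that all constraints are closed conditions, so that the limit stays in $\bbD_1$. Combined with Propositions~\ref{proposition: r_selfmap}--\ref{proposition: r_continuity} and the convexity of $\bbD_1$, this sets up Schauder's fixed-point theorem.
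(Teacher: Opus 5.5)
Your proposal is correct and follows essentially the same route as the paper's proof: uniform boundedness, equicontinuity from the common Lipschitz constant $M$, Arzel\`a--Ascoli, and closedness of $\bbD_1$ under uniform limits. Your explicit check that evenness, the two-sided bound, monotonicity, and the Lipschitz bound all survive uniform limits spells out the closedness of $\bbD_1$, which the paper only asserts.
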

\begin{proof}
Let $\{v_n\}\subset \bbD_1$. Since every $v_n$ is $M$-Lipschitz on $[-\ell,\ell]$, the family $\bbD_1$ is equicontinuous. 
Moreover, $\bbD_1$ is uniformly bounded in $L^\infty([-\ell,\ell])$. By the Arzel\`a--Ascoli theorem, there exists a subsequence $v_{n_k}$ and a function $v\in C([-\ell,\ell])$ such that
\[
\|v_{n_k}-v\|_{L^\infty([-\ell,\ell])}\to0 .
\]Since $\bbD_1$ is closed in $L^\infty$, we have $v\in\bbD_1$. Thus every sequence in $\bbD_1$ has a convergent subsequence in $L^\infty$, so $\bbD_1$ is compact in the $L^\infty$ norm.
\end{proof}
\begin{theorem}\label{theorem: existence of fixed point}
    Let $a\in[2/3,1]$. The map $\calR_a$ has a fixed point $v\in\bbD_1$, i.e. $\calR_a(v)=v$.
\end{theorem}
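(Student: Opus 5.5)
The plan is a direct application of Schauder's fixed-point theorem in the Banach space $(\mathbb{V}_1,\|\cdot\|_{L^\infty([-\ell,\ell])})$. First, I check that $\bbD_1$ is nonempty, closed and convex. It is nonempty because the constant function $v\equiv 1$ lies in it: it is even, satisfies $1-\eta x^2\le 1\le 1$, is nonincreasing, and is $0$-Lipschitz. It is convex because each defining condition is preserved under convex combinations: the pointwise two-sided bounds, monotonicity on $[0,\ell]$, and the Lipschitz bound with constant $M$ (by the triangle inequality). It is closed under uniform convergence because all these conditions are pointwise and pass to uniform limits, and evenness does too. Lemma~\ref{lemma: D_compactness} then gives that $\bbD_1$ is compact.

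Next, I note that $\calR_a$ is defined a priori only for $x\ge0$. Its value on $[-\ell,0)$ is defined by even extension, so $\calR_a(v)\in\mathbb{V}_1$. This is consistent with the evenness of $v=u_x$ for odd $u$. With this convention, Lemmas~\ref{lemma: r_properties} and \ref{lemma: r_Lipschitz} give Proposition~\ref{proposition: r_selfmap}, so $\calR_a(\bbD_1)\subset\bbD_1$. For the Lipschitz bound across $0$, I use that $r'(x)=O(x)$, so the even extension is still $M$-Lipschitz. Proposition~\ref{proposition: r_continuity} gives continuity in $L^\infty$; since the estimates there are on $[0,\ell]$, continuity on $[-\ell,\ell]$ follows by symmetry.

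Finally, I invoke Schauder's theorem: a continuous self-map of a nonempty compact convex subset of a Banach space has a fixed point. This yields $v\in\bbD_1$ with $\calR_a(v)=v$. I do not expect a genuine obstacle, since all the analytic work is in the preceding lemmas. The only points needing care are the well-definedness of $\calR_a(v)$ at $x=0$ and the even extension. At the origin I would set $\calR_a(v)(0)=1$, which is justified by the upper and lower bounds of Lemma~\ref{lemma: r_properties}, since both squeeze to $1$ as $x\to0$. I would also check that the integrals converge. The factor $x^{2-1/a}\int_0^x y^{1/a-1}\,\cdots\,\mathrm{d}y=O(x^2)$ is finite because $1/a-1\ge0$, and the integrand $(g-1)/(yg)$ is $O(y)$ by Lemma~\ref{lemma: g_properties}.
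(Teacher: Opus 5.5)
Your proposal is correct and matches the paper's proof, which is the same one-line application of Schauder's theorem to the compact convex set $\bbD_1$ via Proposition~\ref{proposition: r_selfmap}, Proposition~\ref{proposition: r_continuity} and Lemma~\ref{lemma: D_compactness}. Your additional checks fill in details the paper leaves implicit: nonemptiness via $v\equiv1$, convexity and closedness, the even extension of $\calR_a(v)$ to $[-\ell,0)$, and the convention $\calR_a(v)(0)=1$.
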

\begin{proof}
    By Proposition \ref{proposition: r_continuity} and Lemma \ref{lemma: D_compactness}, $\bbD_1$ is a convex, closed and compact in the $L^\infty$-norm, and $\calR_a$ continuously maps $\bbD_1$ into itself. The Schauder fixed-point theorem then guarantees that $\calR_a$ has a fixed point in $\bbD_1$. 
\end{proof}

The fixed point constructed above gives a solution of the normalized profile system only on a small symmetric interval around the origin. To obtain a complete periodic profile, we must continue this solution until the velocity profile reaches the endpoint of one half-period. This continuation is governed by an autonomous first-order ODE system away from the origin, where the singularity of the original formulation is no longer present.

\subsection{Extension of the solution}

We have already obtained a solution to equation \eqref{eqt:gmu} on $[-\ell,\ell]$, and now aim to extend this solution to the region $|x|>\ell$. 
Since the solution is already away from the singular point \(x=0\), the equations can be treated as a standard first-order ODE system as long as \(g>0\). The key issue is therefore to understand whether and how \(g\) can approach zero, which will determine the endpoint of the half-period.

For $axg(x)\neq0$ the equation \eqref{eqt:gmu} is equivalent to the first--order ODE system for $(m,v,g)$, where $v:=u_x$,
\begin{equation}\label{eqt:gmu_for_extension}
\begin{aligned}
m_x &= \frac{2(v-g)+2(1-a)(g-1)}{axg}\,m,\\
v_x &= \frac{(av+3a-2)(v-1)-4x^2 m}{2axg},\\
g_x &= \frac{v-g}{x}.
\end{aligned}
\end{equation}
\begin{proposition}\label{proposition: local_existence}
Let $x_0>0$ and let $m_0,v_0,g_0$ satisfy $g_0>0$. Then there exists $\eps>0$ such that the system \eqref{eqt:gmu_for_extension} admits a unique solution $(m,v,g)\in C^1([x_0,x_0+\eps])$ satisfying
\[
m(x_0)=m_0,\qquad v(x_0)=v_0,\qquad g(x_0)=g_0.
\]
\end{proposition}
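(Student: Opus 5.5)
This follows from the Picard--Lindel\"of theorem applied to the system \eqref{eqt:gmu_for_extension}, viewed as $Y'=F(x,Y)$ with $Y=(m,v,g)$. First we locate a neighborhood of the initial point on which $F$ is smooth. Since $x_0>0$ and $g_0>0$, set
\[
K:=\Bigl\{(x,m,v,g):\ |x-x_0|\leq \tfrac{x_0}{2},\ |m-m_0|\leq 1,\ |v-v_0|\leq 1,\ |g-g_0|\leq \tfrac{g_0}{2}\Bigr\}.
\]
On $K$ we have $x\geq x_0/2>0$ and $g\geq g_0/2>0$, so the denominators $axg$ and $x$ satisfy $axg\geq a x_0 g_0/4>0$ and $x\geq x_0/2$. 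Here $a>0$; in this section $a\in[2/3,1]$.

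Next we check the regularity of $F$. On $K$ each component of $F$ is a rational function of $(x,m,v,g)$ whose denominator is bounded away from zero, so $F$ is $C^\infty$ on the compact set $K$. In particular $F$ is continuous, bounded by some constant $C_K$, and Lipschitz in $Y$ uniformly in $x$, with constant $L_K:=\sup_K|D_YF|<\infty$. The bounds on $m$ and $v$ needed for this are exactly what $K$ supplies, since all numerators are polynomials and so are bounded on $K$.

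Now apply Picard--Lindel\"of. Choose
\[
\eps:=\min\Bigl(\tfrac{x_0}{2},\ \tfrac{1}{C_K},\ \tfrac{g_0}{2C_K}\Bigr),
\]
which guarantees that the Picard iterates stay inside $K$. The standard contraction argument for the integral equation $Y(x)=Y_0+\int_{x_0}^xF(s,Y(s))\idiff s$ on $C([x_0,x_0+\eps])$ then gives a unique solution; one can use a weighted sup-norm, or shrink $\eps$ further so that $\eps L_K<1$. This solution is $C^1$ because $F$ is continuous. Uniqueness among all $C^1$ solutions holds as well: any solution stays in $K$ for a short time by continuity, so Gr\"onwall's inequality with constant $L_K$ shows it agrees with the one constructed, and a continuation argument extends the agreement to the whole interval.

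There is no real obstacle here. The only point requiring care is that the solution must remain in the region where $g$ is bounded away from zero, and the choice of $\eps$ above ensures this.
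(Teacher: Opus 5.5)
Your proposal is correct and follows the same route as the paper. Both arguments restrict to a neighborhood where $x\geq x_0/2$ and $g\geq g_0/2$, so the right-hand side of \eqref{eqt:gmu_for_extension} is smooth and locally Lipschitz there, and then apply Picard--Lindel\"of; you are simply more explicit about choosing $\eps$ so the iterates stay in the box, about uniqueness among all $C^1$ solutions, and about the need for $a>0$.
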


\begin{proof}
Denote by $F(x,m,v,g)$ the right-hand side of \eqref{eqt:gmu_for_extension}.  
Since $x_0>0$ and $g_0>0$, we may choose $\eps>0$ such that
$x\geq x_0/2,\, g\geq g_0/2$
in a neighborhood of $(x_0,m_0,v_0,g_0)$. On this region all denominators $x$, $x^2$, and $g$ in \eqref{eqt:gmu_for_extension} are bounded away from zero. Hence $F$ is $C^1$ in $(m,v,g)$ and therefore locally Lipschitz in $(m,v,g)$ by standard product estimate. 

By the Picard--Lindel\"of theorem, the ODE system admits a unique solution $(m,v,g)\in C^1([x_0,x_0+\eps])$ satisfying the given initial data.
\end{proof}

The local existence result allows us to continue the solution as long as \(g\) remains positive and \((m,v,g)\) are bounded. We next derive monotonicity properties that prevent undesired behavior during the continuation and provide the estimates needed to identify the endpoint.

\begin{lemma}\label{lemma: monotonicity_extension}
Let $a\in[2/3,1]$. Suppose the system \eqref{eqt:gmu_for_extension} admits a solution
\[
(m,v,g)\in C^1((\ell,X))
\]
such that
\[
m(\ell)>0,\qquad v(\ell)\leq g(\ell)<1,\qquad g(x)>0 \quad \text{for all }x\in(\ell,X).
\]
Then on $(\ell,X)$ the function $g$ and the ratio $m/g^2$ are monotonically decreasing.
\end{lemma}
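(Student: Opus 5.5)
The plan is to reduce both monotonicity claims to the sign of \(v-g\) and \(v-1\), and then to close the argument by continuity, using the second equation of \eqref{eqt:gmu_for_extension} at the critical points.

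\emph{Positivity of \(m\) and the ratio identity.} The first equation of \eqref{eqt:gmu_for_extension} is linear in \(m\) with a coefficient that is continuous on \((\ell,X)\), since \(x>0\) and \(g>0\) there. Hence \(m(x)=m(\ell)\exp(\int_\ell^x\cdots)>0\) throughout. Next, a direct computation combining the \(m\) and \(g\) equations gives
\[
\Big(\frac{m}{g^2}\Big)_x=\frac{m}{g^2}\Big(\frac{2(v-g)+2(1-a)(g-1)}{axg}-\frac{2(v-g)}{xg}\Big)
=\frac{m}{g^2}\cdot\frac{2(1-a)(v-1)}{axg}.
\]
So \(m/g^2\) is nonincreasing as soon as \(v\leq 1\). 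It is strictly decreasing when \(a<1\) and \(v<1\); when \(a=1\) it is constant. Meanwhile \(g_x=(v-g)/x\), so \(g\) is decreasing as soon as \(v<g\). Everything therefore reduces to proving \(v<g\) on \((\ell,X)\). Since \(g\) is then decreasing from \(g(\ell)<1\), this also yields \(v<g<1\).

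\emph{Barrier argument for \(h:=v-g\).} We have \(h(\ell)\leq0\). At any point \(x_*\) with \(h(x_*)=0\), \(g(x_*)<1\) and \(g(x_*)>0\), the \(g\)-equation gives \(g_x(x_*)=0\), so
\[
h_x(x_*)=v_x(x_*)=\frac{(av+3a-2)(v-1)-4x_*^2m}{2ax_*g}\bigg|_{x_*}.
\]
Here \(v=g\in(0,1)\) and \(a\geq 2/3\), so \(av+3a-2\geq av>0\) and \(v-1<0\). Also \(m>0\). Hence \(h_x(x_*)<0\).

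This settles both possibilities at the endpoint. If \(h(\ell)=0\), then \(h<0\) immediately to the right of \(\ell\). If \(h(\ell)<0\), the same holds by continuity.

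Now let \(x_*\) be the first point in \((\ell,X)\) where \(h\) returns to \(0\). On \((\ell,x_*)\) we have \(h<0\), so \(g\) is strictly decreasing there, and thus \(0<g(x_*)<g(\ell)<1\). The computation above then gives \(h_x(x_*)<0\). This contradicts \(h<0\) on \((\ell,x_*)\) together with \(h(x_*)=0\), which would force \(h_x(x_*)\geq 0\). Hence \(v<g\) on all of \((\ell,X)\).

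\emph{Conclusion and main obstacle.} The conclusion is that \(g\) is strictly decreasing, \(v<g<1\), and therefore \(m/g^2\) is decreasing by the identity above. The only delicate step is the barrier computation. It needs the sign \(av+3a-2>0\) at a touching point, which is exactly where \(a\geq2/3\) and \(g>0\) enter. It also needs \(g<1\) at that point, and this is supplied inductively by the monotonicity of \(g\) up to the first touching time.
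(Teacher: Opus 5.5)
Your proof is correct, but it establishes the key inequality \(v<g\) by a different mechanism than the paper.

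\textbf{The paper's route.} The paper argues in stages.
\begin{enumerate}
\item It shows \(v<1\) by a barrier at the level \(v=1\), where \(v_x=-2xm/(ag)<0\).
\item It notes that \(v_x<0\) wherever \(v\geq0\), because \(av+3a-2\geq0\) there. On that region it combines \(xg(x)=\ell g(\ell)+\int_\ell^x v(y)\,\mathrm{d}y\) with \(v(x)\leq v(\ell)\leq g(\ell)\) to get \(g>v\).
\item It checks that \(v\) crosses zero transversally downward and stays negative, where \(g_x<0\) is immediate.
\end{enumerate}

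\textbf{Your route.} You replace all of this with a single first-touching argument for \(h=v-g\). At a touching point \(g_x=0\), so \(h_x=v_x\). The needed sign \(av+3a-2\geq ag>0\) then comes for free from \(v=g>0\). You therefore need neither a case split on the sign of \(v\) nor the separate claim \(v<1\), which follows from \(v<g<g(\ell)<1\).

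Similarly, your identity \((m/g^2)_x=\frac{m}{g^2}\cdot\frac{2(1-a)(v-1)}{axg}\) is equivalent to the paper's expression in terms of \(g_x\) and \(g-1\). It shows more directly that only \(v\leq1\) matters, and that \(m/g^2\) is constant when \(a=1\).

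\textbf{Trade-offs.} Your route is shorter and exhibits \(\{v<g\}\) as an invariant region. The paper's route records extra qualitative information about \(v\): strict decrease while nonnegative, and at most one downward zero crossing. Facts of this kind are used again in the maximal-interval analysis. The paper's integral comparison also needs no derivative at the endpoint \(\ell\).

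On that last point: when \(v(\ell)=g(\ell)\), your claim that \(h<0\) just to the right of \(\ell\) evaluates the ODE at \(x=\ell\). It therefore uses that the solution is \(C^1\) up to \(\ell\) with \(g(\ell)>0\). This holds in the application, since the solution continues across \(\ell\) and \(g(\ell)\geq 2/3\), so it is a remark rather than a gap.
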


\begin{proof}
From the third equation of \eqref{eqt:gmu_for_extension}, we have
\[
g_x=\frac{v-g}{x},
\]
we obtain
\[
(xg)'=v,
\]
and therefore
\begin{equation}\label{eq:g_integral}
xg(x)=\ell g(\ell)+\int_\ell^x v(y)\idiff y .
\end{equation}

Using again $xg_x=v-g$, the equation for $m$ can be rewritten as
\[
\frac{m_x}{m}
=
\frac{2g_x}{ag}+\frac{2(1-a)(g-1)}{axg}.
\]
Integrating from $\ell$ to $x$ gives
\begin{equation}\label{eq:m_integral}
m(x)
=
m(\ell)\Bigl(\frac{g(x)}{g(\ell)}\Bigr)^{\frac{2}{a}}
\exp\!\left(
\frac{2(1-a)}{a}\int_\ell^x\frac{g(y)-1}{y\,g(y)}\idiff y
\right),
\end{equation}
which implies $m(x)>0$ on $(\ell,X)$.

We next show that $v(x)<1$ on $(\ell,X)$. Otherwise let $x_1$ be the first point where
$v(x_1)=1$. Then $v_x(x_1)\geq0$, while the second equation of
\eqref{eqt:gmu_for_extension} yields
\[
v_x(x_1)
=
-\frac{2x_1m(x_1)}{ag(x_1)}<0,
\]
since $m(x_1)>0$ and $g(x_1)>0$, a contradiction. Hence $v<1$ everywhere.

If $v(x)\geq0$, then since $a\in[2/3,1]$ we have $(av+3a-2)\geq0$ and $v-1<0$, and thus
\[
v_x=\frac{(av+3a-2)(v-1)-4x^2m}{2axg}<0.
\]
Hence $v$ is decreasing on every interval where it is nonnegative. Using
\eqref{eq:g_integral} and the fact that $v(y)\geq v(x)$ for $y\leq x$, we obtain
\[
xg(x)=\ell g(\ell)+\int_\ell^x v(y)\idiff y
\geq \ell g(\ell)+(x-\ell)v(x).
\]
Since $v(x)\leq v(\ell)\leq g(\ell)$ and $v(x)<1$, this implies $g(x)>v(x)$, and therefore
\[
g_x(x)=\frac{v(x)-g(x)}{x}<0.
\]

If $v(x_0)=0$ for some $x_0$, then
\[
v_x(x_0)
=
\frac{(3a-2)(-1)-4x_0^2m(x_0)}{2ax_0g(x_0)}<0,
\]
so $v$ crosses $0$ strictly downward and remains negative afterwards. In this case
$g_x=(v-g)/x<0$ trivially. Hence $g$ is decreasing on $(\ell,X)$.

Finally, differentiating $m/g^2$ using \eqref{eq:m_integral} gives
\[
\left(\frac{m}{g^2}\right)_x
=
\left(\Bigl(\frac{2}{a}-2\Bigr)\cdot\frac{g_x}{g}
+
\frac{2(1-a)}{a}\cdot\frac{g-1}{xg}\right)\frac{m}{g^2}.
\]
Since $a\in[2/3,1]$, $g_x\leq0$, and $g(x)<1$, the right-hand side is nonpositive, and
therefore $(m/g^2)_x\leq0$.
\end{proof}

We now combine the local continuation criterion with the monotonicity estimates. The result is a maximal extension on an interval \((-L,L)\), where the endpoint behavior is determined by the vanishing of \(g\).

\begin{corollary}\label{corollary: global_extension}
Let $a\in[2/3,1]$. Then the system \eqref{eqt:gmu_for_extension} admits a solution
\[
(m,v,g)\in C^1((-L,L))
\]
on a maximal interval
$(-L,L)$ with $ 0<L\leq +\infty,$
such that
\[
\lim_{x\to L^-} m(x)=0,\qquad
\lim_{x\to L^-} v(x)=-\frac{3a-2}{a},\qquad
\lim_{x\to L^-} g(x)=0.
\]
\end{corollary}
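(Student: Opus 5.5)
The plan is to start from the fixed point $v\in\bbD_1$ of Theorem~\ref{theorem: existence of fixed point} and recover $g=\calG(v)$, $\phi=\Phi(v)$ and $m=\tfrac12 g^{2/a}\phi^{2(1-a)/a}$. These solve \eqref{eqt:gmu} on $[-\ell,\ell]$, and we continue them past $\ell$ with Proposition~\ref{proposition: local_existence}. The result extends to $(-\ell,\ell)$ by symmetry: $v$ and $g$ are even, and $m$ is even.

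\textbf{Step 1: hypotheses of Lemma~\ref{lemma: monotonicity_extension}.} We check these at $x=\ell$. Positivity of $m(\ell)$ is clear. Lemma~\ref{lemma: g_properties} gives $g(\ell)\le 1$. Lemma~\ref{lemma: r_properties} gives the strict bound $v(\ell)<1$, hence $g(\ell)<1$. Since $v$ is nonincreasing, $g(\ell)=\frac1\ell\int_0^\ell v\ge v(\ell)$. If the strict inequality $g(\ell)<1$ at $\ell$ turns out to be delicate, we restart the continuation from any point $x\in(0,\ell]$; the hypotheses only involve the data there.

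\textbf{Step 2: maximal interval.} Let $L$ be the supremum of the points $X$ such that a $C^1$ solution exists on $(\ell,X)$ with $g>0$. By Lemma~\ref{lemma: monotonicity_extension}, $g$ and $m/g^2$ decrease and $v<1$. We claim the solution stays bounded while $g$ stays bounded below. Indeed, $m\le Cg^2$ is bounded, and $v=(xg)'$. From the $v$-equation, on any compact set where $g\ge c>0$ the right-hand side is at most quadratic in $v$. To rule out $v\to-\infty$ there, we use the bound on the integral: $xg(x)=\ell g(\ell)+\int_\ell^x v$ must stay positive, which controls $\int v^-$. Combined with the sign of $v_x$ when $v$ is very negative (the coefficient $(av+3a-2)(v-1)>0$ dominates), this should force $v$ to increase back toward $-(3a-2)/a$. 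Hence blow-up before $g\to0$ is impossible, and by standard continuation either $L=\infty$ or $g(x)\to0$ as $x\to L^-$. Since $g$ is decreasing and positive, $\lim_{x\to L^-} g=g_*\ge0$ exists in every case.

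\textbf{Step 3: endpoint limits (main obstacle).} We must show that $g_*=0$ and identify the limits of $v$ and $m$.

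First, $m\to0$ follows from $m\le (m/g^2)(\ell)\,g^2$ once $g_*=0$.

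Next we show $g_*=0$. If $L=\infty$ and $g_*>0$, then $g_x=(v-g)/x$ and the monotonicity of $g$ give $v-g\to0$ along a sequence, so $v$ tends to $g_*$. The $v$-equation then reads
\[
v_x\approx \frac{(ag_*+3a-2)(g_*-1)-4x^2m}{2axg_*},
\]
whose right-hand side is negative of size at least $c/x$. Integrating drives $v\to-\infty$ logarithmically, which contradicts $xg>0$. Hence $g_*=0$.

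Finally we find the limit of $v$. Near $L$, $xg\to0$, so the equation for $v$ is singular with small denominator $2axg$. The numerator $N(v)=(av+3a-2)(v-1)-4x^2m$ tends to $(av+3a-2)(v-1)$, which vanishes at $v=1$ and at $v_*=-(3a-2)/a$. For $v\in(v_*,1)$ we have $N<0$, so $v$ decreases rapidly; for $v<v_*$ we have $N>0$ and $v$ increases. Thus $v_*$ is an attracting equilibrium of the fast dynamics (time $\int dx/(xg)$). A barrier argument with $v_*\pm\epsilon$ shows $v\to v_*$, provided $\int^L dx/(xg)=\infty$. Since $(xg)'=v\to v_*\le0$, $xg$ vanishes at most linearly, so this integral diverges.

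The case $a=2/3$ ($v_*=0$), and the possibility $L=\infty$ with $g\to0$, need separate care: in the latter the dominant balance gives the same attracting point. This final asymptotic analysis is where I expect the most difficulty.
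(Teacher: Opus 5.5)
\textbf{Overall.} Your Steps 1--2 are sound. Step 1 checks the hypotheses of Lemma~\ref{lemma: monotonicity_extension} more carefully than the paper does. In Step 2, the lower barrier for $v$ gives the continuation alternative: the numerator $N=(av+3a-2)(v-1)-4x^2m$ is positive once $v$ is very negative, since $m\le Cg^2$ and $x$ stays bounded. The trouble is Step 3, where the phase-line arguments are asserted rather than carried out, and two of them are wrong as written.

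(i) For $L=\infty$, $g_*>0$, you slide from ``$v-g\to0$ along a sequence'' to ``$v$ tends to $g_*$''. You then integrate $v_x\le -c/x$, a bound valid only while $v$ is near $g_*$. Below $v_*=-(3a-2)/a$ the first term of $N$ is positive, so nothing drives $v\to-\infty$ and no contradiction with $xg>0$ follows. The contradiction has to be with $g\ge g_*$. In the fast variable $\tau=\int \mathrm{d}x/(2axg)$, which diverges because $g\le1$, one has $\mathrm{d}v/\mathrm{d}\tau=N< a\epsilon(g(\ell)-1)<0$ whenever $v_*+\epsilon\le v<g(\ell)$; here the $m$-term has the good sign. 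So eventually $v<v_*+\epsilon$, and $g(x)=\frac1x\bigl(Xg(X)+\int_X^x v\bigr)$ gives $g_*\le v_*+\epsilon\le\epsilon$, a contradiction for $\epsilon<g_*$.

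(ii) You justify $\int^L \mathrm{d}x/(xg)=\infty$ by ``$(xg)'=v\to v_*$'', which is the conclusion you are after. Use instead that $v$ is bounded below near $L<\infty$ (your Step 2 barrier), so $xg(x)=-\int_x^L v\le C(L-x)$.

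(iii) The genuinely missing ingredient is in the case $L=\infty$, $g\to0$, which you leave to ``dominant balance''. The upper barrier above still gives $\limsup v\le v_*$, which excludes this case for $a>2/3$ (then $xg\to-\infty$). At $a=2/3$, however, you need the lower barrier at $v_*-\epsilon$, and that requires $x^2m\to0$ as $x\to\infty$. This does not follow from $m\to0$; you need a decay rate. The paper gets it from $m=\frac12 g^{2/a}\phi^{2(1-a)/a}$ together with $(\log\phi)'=\frac{g-1}{xg}\le-\frac{1}{(1-a)x}$ once $g\le\frac{1-a}{2-a}$, i.e.\ $\phi\le Cx^{-1/(1-a)}$.

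More broadly, the paper bypasses all these barrier arguments with one identity your proposal never uses. For $q=av+3a-2$, the integrating factor $\sqrt{g\phi}$ gives
\[
q(x)=\sqrt{g(x)\phi(x)}\Bigl(\frac{q(\ell)}{\sqrt{g(\ell)\phi(\ell)}}-2\int_\ell^x\frac{y\,m(y)}{g(y)^{3/2}\phi(y)^{1/2}}\,\mathrm{d}y\Bigr).
\]
Hence $\limsup q\le0$ whenever $g\phi\to0$, which covers $g_*>0$ with $\phi\to0$ as well as $g_*=0$. Then $q\to0$ reduces to showing that the integral term times $\sqrt{g\phi}$ vanishes, which is exactly where the decay of $\phi$ is used. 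Your fast-variable route is a legitimate alternative once (i)--(iii) are repaired, but as written Step 3 is a sketch with these holes rather than a proof.
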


\begin{proof}
By Proposition \ref{proposition: local_existence}, the solution constructed on
$[-\ell,\ell]$ extends uniquely to a maximal interval
\[
(-L,L),\qquad 0<L\leq +\infty,
\]
on which $(m,v,g)\in C^1((-L,L))$ and $g>0$. By symmetry, it suffices to study the
behavior as $x\to L^-$.

Applying Lemma \ref{lemma: monotonicity_extension} on $(\ell,L)$ yields
\[
m(x)>0,\qquad v(x)<g(x)<1,\qquad g_x(x)<0,\qquad
\left(\frac{m}{g^2}\right)_x(x)\leq0 .
\]
Hence $g$ is positive and decreasing on $(\ell,L)$, and therefore
\[
g_*:=\lim_{x\to L^-} g(x)
\]
exists with $g_*\geq0$.

\medskip

To analyze $v$, define
\[
\phi(x):=\exp\!\left(\int_0^x\frac{g(y)-1}{y g(y)}\idiff y\right),
\qquad
q(x):=a\,v(x)+3a-2 .
\]
Using
\[
v_x=\frac{(av+3a-2)\bigl((xg)_x-1\bigr)-4x^2 m}{2axg},
\qquad (xg)_x=v,
\]
we obtain
\[
q_x-\frac{(xg)_x-1}{2xg}q=-\frac{2xm}{g}.
\]
Since
\[
\frac{(xg)_x-1}{2xg}
=
\frac{xg_x+g-1}{2xg}
=
\frac{g_x}{2g}+\frac{g-1}{2xg}
=
\frac12\frac{(g\phi)_x}{g\phi},
\]
it follows that
\[
\left(\frac{q}{\sqrt{g\phi}}\right)_x
=
-\frac{2xm}{g^{3/2}\phi^{1/2}}.
\]
Integrating from $\ell$ to $x$ yields
\begin{equation}\label{eq:q_representation_in_cor}
q(x)
=
\sqrt{g(x)\phi(x)}
\left(
\frac{q(\ell)}{\sqrt{g(\ell)\phi(\ell)}}
-
2\int_\ell^x
\frac{y\,m(y)}{g(y)^{3/2}\phi(y)^{1/2}}
\idiff y
\right).
\end{equation}

\medskip

We first show that $g_*=0$.  
The proof splits into two cases depending on whether $L<+\infty$ or $L=+\infty$.

\medskip
\noindent
\textit{Case 1: $L<+\infty$.}

Assume $g_*>0$. Then there exists $x_0\in(\ell,L)$ such that
\[
g(x)\geq\frac{g_*}{2},\qquad x\in[x_0,L).
\]
Since $g<1$, the function $\phi$ is decreasing and bounded above by $\phi(x_0)$.
Because $g$ stays bounded away from zero and $L<\infty$, $\phi$ is also bounded
below by a positive constant on $[x_0,L)$.

Using \eqref{eq:q_representation_in_cor} with $x_0$ in place of $\ell$, the integral
term remains bounded. Hence $q$ and therefore $v$ remain bounded near $L$.
Together with
\[
m(x)\leq \tfrac12,\qquad g(x)\geq \tfrac{g_*}{2},
\]
this implies that the right-hand side of \eqref{eqt:gmu_for_extension} remains
bounded, contradicting maximality. Hence $g_*=0$.

\medskip
\noindent
\textit{Case 2: $L=+\infty$.}

Assume $g_*>0$. Since $g$ is decreasing and $g(x)\leq g(\ell)<1$,
\[
\frac{g(x)-1}{xg(x)}
\leq
-\Bigl(\frac1{g(\ell)}-1\Bigr)\frac1x .
\]
Hence $\phi(x)\to0$ as $x\to\infty$. From \eqref{eq:q_representation_in_cor},
\[
q(x)\leq
\sqrt{g(x)\phi(x)}
\frac{q(\ell)}{\sqrt{g(\ell)\phi(\ell)}} .
\]
Therefore,
\begin{equation}\label{eq:limsup-v-upper}
\limsup_{x\to\infty} v(x)
\leq
-\frac{3a-2}{a}
=:v_* .
\end{equation}
This estimate will be used below to rule out the possibility that the solution extends to infinity.

Since $(xg)'=v$, we have
\[
g(x)
=
\frac{\ell g(\ell)}{x}
+\frac1x\int_\ell^x v(y)\idiff y .
\]
For large $x$, by the definition \eqref{eq:limsup-v-upper} of $v_*$, we have
\[
v(x)\leq v_*+\varepsilon .
\]
Consequently
\[
g_*\leq v_*+\varepsilon .
\]
Letting $\varepsilon\to0$ gives $g_*\leq v_*\leq0$, contradicting $g_*>0$.
Hence $g_*=0$.

\medskip

Therefore
\[
\lim_{x\to L^-} g(x)=0 .
\]

Since $\left(m/g^2\right)_x\leq0$,
\[
0<m(x)\leq
\frac{m(\ell)}{g(\ell)^2}g(x)^2,
\]
and hence
\[
\lim_{x\to L^-} m(x)=0 .
\]

\medskip

It remains to determine $\lim_{x\to L^-}v(x)$.
Using
\[
m(x)=\frac12 g(x)^{\frac{2}{a}}\phi(x)^{\frac{2(1-a)}{a}},
\]
representation \eqref{eq:q_representation_in_cor} becomes
\[
q(x)
=
\sqrt{g(x)\phi(x)}
\left(
\frac{q(\ell)}{\sqrt{g(\ell)\phi(\ell)}}
-
\int_\ell^x
y\,g(y)^{\frac{2}{a}-\frac32}
\phi(y)^{\frac{4-5a}{2a}}
\idiff y
\right).
\]
Since $g(x)\to0$ and $0<\phi(x)\leq\phi(\ell)$,
\[
\sqrt{g(x)\phi(x)}\to0 .
\]
Because $\phi$ is decreasing, for $\ell\leq y\leq x$ we have
$\phi(x)\leq\phi(y)$, and therefore
\[
\sqrt{g(x)\phi(x)}
\int_\ell^x
y\,g(y)^{\frac{2}{a}-\frac32}
\phi(y)^{\frac{4-5a}{2a}}
\idiff y
\leq
\sqrt{g(x)}
\int_\ell^x
y\,g(y)^{\frac{2}{a}-\frac32}
\phi(y)^{\frac{2(1-a)}{a}}
\idiff y .
\]
To estimate the integral term, 
we again distinguish two cases depending on whether $a<1$ or $a=1$.

\medskip
\noindent
\textit{Case 1: $2/3<a<1$.}

Since $\frac{2}{a}-\frac32\geq\frac12$ and $0<g(y)\leq 1$,
\[
g(y)^{\frac{2}{a}-\frac32}\leq1 .
\]
Thus
\[
\sqrt{g(x)\phi(x)}
\int_\ell^x
y\,g(y)^{\frac{2}{a}-\frac32}
\phi(y)^{\frac{4-5a}{2a}}
\idiff y\leq 
\sqrt{g(x)}
\int_\ell^x
y\,\phi(y)^{\frac{2(1-a)}{a}}
\idiff y .
\]
Because $g(x)\to0$, for sufficiently large $x$
\[
g(x)\leq\frac{1-a}{2-a}.
\]
This implies
\[
\phi(x)\leq Cx^{-\frac1{1-a}} .
\]
Hence
\[
y\,\phi(y)^{\frac{2(1-a)}{a}}
\leq
Cy^{1-\frac2a}.
\]
Since $a<1$, the exponent is less than $-1$, and the integral from $\ell$ to $+\infty$ is finite.
Therefore the right-hand side is bounded by $C\sqrt{g(x)}\to0$,
so
\[
q(x)\to0 \qquad \text{ as } x\to +\infty.
\]
Since $q=a\,v+3a-2$, we conclude
\[
\lim_{x\to L^-}v(x)
=
-\frac{3a-2}{a}.
\]

\medskip
\noindent
\textit{Case 2: $a=1$.}

Then $q=v+1$. From the previous argument
\[
\limsup_{x\to L^-} v(x)\leq -1 .
\]
If $L=+\infty$, then for some $X$
\[
v(x)\leq-\tfrac12,\qquad x\geq X .
\]
Since $(xg)'=v$,
\[
xg(x)
=
Xg(X)+\int_X^x v(y)\idiff y
\leq
Xg(X)-\tfrac12(x-X),
\]
which becomes negative for large $x$, contradicting $g>0$.
Hence $L<\infty$.
Returning to the estimate above and substituting $a=1$,
\[
\sqrt{g(x)\phi(x)}
\int_\ell^x
y\,g(y)^{\frac12}\phi(y)^{-\frac12}
\idiff y
\leq
\sqrt{g(x)}
\int_\ell^x
y\,g(y)^{\frac12}\idiff y .
\]
Since $L<\infty$ and $0<g(y)<1$,
\[
\int_\ell^x y\,g(y)^{\frac12}\idiff y
\leq
\int_\ell^L y\idiff y
<+\infty .
\]
Hence the right-hand side is bounded by $C\sqrt{g(x)}\to0$.
Therefore
\[
q(x)\to0 .
\]
Since $q=v+1$, we conclude
\[
\lim_{x\to L^-}v(x)=-1=-\frac{3a-2}{a}.
\]
This completes the proof.
\end{proof}

The preceding corollary identifies the possible endpoint behavior, but it does not yet determine whether the maximal interval is finite. This depends on the parameter \(a\). The next proposition distinguishes the supercritical case \(a>2/3\), where the half-period is finite, from the critical case \(a=2/3\), where the interval extends to infinity.

\begin{proposition}\label{proposition: maximal_interval}
Under the assumptions of Lemma \ref{lemma: monotonicity_extension}, the maximal interval $(-L,L)$ satisfies
\begin{enumerate}
\item $L<+\infty$ if $a\in(2/3,1]$;
\item $L=+\infty$ if $a=2/3$.
\end{enumerate}
\end{proposition}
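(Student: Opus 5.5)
We treat the two regimes separately. In both we start from Corollary~\ref{corollary: global_extension}, which already gives $g\to0$, $m\to0$, $v\to v_*:=-(3a-2)/a$ as $x\to L^-$. We also use $h:=xg$, which satisfies $h'=v$ and $h>0$ on $(\ell,L)$.

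\emph{Case $a\in(2/3,1]$.} Here $v_*<0$, and the argument is the one already used for $a=1$ in the proof of Corollary~\ref{corollary: global_extension}. Suppose $L=+\infty$. Since $v(x)\to v_*$, there is $X$ with $v\le v_*/2<0$ on $[X,\infty)$. Then
\[
h(x)\le h(X)+\tfrac{v_*}{2}(x-X),
\]
which becomes negative for large $x$. This contradicts $g>0$, so $L<+\infty$.

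\emph{Case $a=2/3$.} Now $v_*=0$, so the linear-decay argument is unavailable and we argue by contradiction. Assume $L<\infty$; then $h(x)\to0$ as $x\to L^-$. We will show that $h$ cannot reach $0$, because $v=h'$ is forced to be tiny by a quantity that decays very fast.

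The first step bounds $v$. Set $\psi:=g\phi$, with $\phi$ as in the proof of Corollary~\ref{corollary: global_extension}. With $a=2/3$ we have $q=\tfrac23 v$. Substituting $m=\tfrac12 g^{3}\phi$ (the formula for $m$ at $a=2/3$) into \eqref{eq:q_representation_in_cor} shows the integrand equals $\tfrac12\,y\,g^{3/2}\phi^{1/2}$. This is bounded on the finite interval $(\ell,L)$, since $0<g<1$ and $0<\phi\le1$. Hence the bracket in \eqref{eq:q_representation_in_cor} is bounded, and
\[
|v(x)|\le C\sqrt{\psi(x)}\qquad\text{on }(\ell,L).
\]

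The second step computes the decay of $\psi$. From $xg_x=v-g$ and $\phi'/\phi=(g-1)/(xg)$ we get
\[
\frac{\psi'}{\psi}=\frac{v-1}{xg}=\frac{v-1}{h}.
\]
Pick $x_0$ with $v\le\tfrac12$ on $[x_0,L)$. Then $\psi'\le-\psi/(2h)<0$, so $\psi$ is strictly decreasing. Moreover $\psi\to0$, since $g\to0$ and $\phi\le1$.

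The key step compares $h$ and $\psi$ directly rather than in $x$. Since $\psi$ is strictly monotone, we may view $h$ as a function of $\psi$. Combining the two previous steps,
\[
\Bigl|\frac{d\ln h}{d\psi}\Bigr|=\Bigl|\frac{h'}{h\,\psi'}\Bigr|=\frac{|v|}{\psi\,|v-1|}\le\frac{2C\sqrt\psi}{\psi}=2C\psi^{-1/2}.
\]
This bound is integrable down to $\psi=0$. Therefore $|\ln h(x)-\ln h(x_0)|\le4C\sqrt{\psi(x_0)}$ for all $x\in[x_0,L)$. So $h$ stays bounded below by a positive constant, contradicting $h\to0$. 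Hence $L=+\infty$.

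The main obstacle is this last comparison. Bounding $h'$ only by $C\sqrt{h}$ is not enough, because that still allows $h$ to reach $0$ in finite time. One has to exploit that $\phi$ (hence $\psi$) collapses like $\exp\bigl(-\int dx/(2h)\bigr)$, and the change of variables to $\psi$ captures this cleanly. The points to check carefully are the uniform boundedness of the bracket in \eqref{eq:q_representation_in_cor} and the choice of $x_0$ so that $v-1\le-\tfrac12$.
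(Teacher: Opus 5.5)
Your proof is correct. Part (1) is the paper's argument; the paper only uses $\limsup_{x\to\infty}v\le-(3a-2)/a$, which is all you need.

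For part (2) you take a genuinely different route. The paper splits according to the sign of $v$:
\begin{itemize}
\item If $v>0$ throughout, then $u=xg$ increases and cannot vanish.
\item Otherwise $v<0$ after its first zero. Assuming $L<\infty$, the paper uses the limit $v\to0$ and the monotonicity of $m/v^2$ to get $x^2m=o(|v|)$. This gives $v_x\ge -v/(4u)$. Integrating $d(-v)/du\ge 1/(4u)$ then shows that $v$ would have to increase by at least $\frac14\log(u(x_1)/u(x))\to\infty$ while remaining negative.
\end{itemize}
So the paper's contradiction comes from the local ODE pushing $v$ upward too strongly as $u\downarrow0$, using only qualitative endpoint information.

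Yours comes instead from the integral representation \eqref{eq:q_representation_in_cor}. At $a=2/3$ its integrand $y\,g^{3/2}\phi^{1/2}$ is bounded, which gives the quantitative bound $|v|\le C\sqrt{g\phi}$. Combined with $(\log(g\phi))'=(v-1)/(xg)$, this shows $|v|$ is too small for $h=xg$ to reach zero.

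What your version buys:
\begin{itemize}
\item It needs no case distinction on the sign of $v$ and no auxiliary quantity $m/v^2$.
\item It needs only $v\le\frac12$ near $L$, which already follows from $v<g\to0$, rather than the exact limit of $v$.
\end{itemize}
What the paper's version buys is independence from the favorable exponents in the integrand, which are what make your bracket bound immediate.

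One simplification: the change of variables to $\psi$ is unnecessary. The estimate integrates directly in $x$ as $|(\log h)'|\le -4C(\sqrt{\psi})'$.
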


\begin{proof}
We first prove \((1)\), namely that \(L<+\infty\) for \(a\in(2/3,1]\).
Arguing by contradiction, assume that
\[
L=+\infty.
\]
From the previous analysis we already know that
\[
\lim_{x\to+\infty} g(x)=0
\]
and
\[
\limsup_{x\to+\infty} v(x)\leq -\frac{3a-2}{a}.
\]
Since \(a\in(2/3,1]\), we have
\[
\frac{3a-2}{a}>0.
\]
Choose \(\varepsilon>0\) such that
\[
-\frac{3a-2}{a}+\varepsilon<0.
\]
Then there exists \(X>0\) such that
\[
v(x)\leq -\frac{3a-2}{a}+\varepsilon<0,
\qquad x\geq X.
\]

Recalling that \(u=xg\) and \(u_x=v\), we obtain
\[
u(x)=u(X)+\int_X^x v(y)\idiff y
\leq
u(X)+\Bigl(-\frac{3a-2}{a}+\varepsilon\Bigr)(x-X).
\]
Since the coefficient of \(x-X\) is negative, the right-hand side becomes
negative for all sufficiently large \(x\), contradicting \(u=xg>0\).
Therefore \(L<+\infty\) for all \(a\in(2/3,1]\).

\medskip

We now prove \((2)\), namely that \(L=+\infty\) when \(a=\frac23\).
In this case the system becomes
\begin{equation}\label{eq:critical_system_for_L}
\begin{aligned}
m_x &= \frac{3v-2g-1}{xg}\,m,\\
v_x &= \frac{v(v-1)-6x^2m}{2xg},\\
g_x &= \frac{v-g}{x}.
\end{aligned}
\end{equation}
Recall that
\[
u=xg,\qquad u_x=v.
\]
We distinguish two cases according to whether \(v\) stays positive or changes sign.

\medskip
\noindent
\textit{Case 1: \(v(x)>0\) for all \(x\in(\ell,L)\).}

Since \(u_x=v>0\), the function \(u=xg\) is increasing on \((\ell,L)\).
Hence
\[
u(x)\geq u(\ell)=\ell g(\ell)>0
\qquad\text{for all }x\in(\ell,L).
\]
In particular \(xg=u\) stays bounded away from zero on every bounded
subinterval. Since also \(m>0\) and \(0<v<g<1\), the right-hand side of
\eqref{eq:critical_system_for_L} remains bounded there. By
Proposition~\ref{proposition: local_existence}, the solution can be
continued as long as \(x\) stays finite. Hence \(L=+\infty\).

\medskip
\noindent
\textit{Case 2: \(v\) vanishes somewhere on \((\ell,L)\).}

Let \(x_0\in(\ell,L)\) be the first point such that
\[
v(x_0)=0.
\]
Evaluating the equation for \(v_x\) at \(x_0\) gives
\[
v_x(x_0)
=
\frac{-6x_0^2m(x_0)}{2x_0g(x_0)}
=
-\frac{3x_0m(x_0)}{g(x_0)}
<0.
\]
Hence \(v\) crosses the axis strictly downward at \(x_0\), and therefore
\[
v(x)<0
\qquad\text{for all }x\in(x_0,L).
\]

We claim that \(L=+\infty\). Suppose for contradiction that \(L<+\infty\).
From the previous analysis for the finite maximal interval case we know
\[
\lim_{x\to L^-} g(x)=0,
\qquad
\lim_{x\to L^-} v(x)=0.
\]
Since \(u=xg\), this implies
\[
\lim_{x\to L^-}u(x)=0.
\]

On \((x_0,L)\) we compute
\[
\left(\log\frac{m}{v^2}\right)_x
=
\frac{m_x}{m}-2\frac{v_x}{v}
=
\frac{2(v-g)}{xg}
+
\frac{6x^2m}{xgv}.
\]
Because \(v<0\), \(g>0\), and \(v<g\), both terms on the right-hand side
are negative. Hence
\[
\left(\log\frac{m}{v^2}\right)_x<0
\qquad\text{on }(x_0,L),
\]
so \(m/v^2\) is decreasing there. Consequently, for any fixed
\(x_1\in(x_0,L)\) there exists \(C_0>0\) such that
\[
m(x)\leq C_0 v(x)^2,
\qquad x\in[x_1,L).
\]

Since \(L<+\infty\) and \(v(x)\to0\), it follows that
\[
x^2m(x)=O(v(x)^2)=o(-v(x))
\qquad\text{as }x\to L^-.
\]
Thus by choosing \(x_1\) sufficiently close to \(L\) we may assume that
\[
6x^2m(x)\leq -\frac{v(x)}2,
\qquad
-\frac12\leq v(x)<0,
\qquad x\in[x_1,L).
\]
Then
\[
v(x)-1\leq -1,
\qquad
v(x)(v(x)-1)\geq -v(x).
\]
Using the equation for \(v_x\), we obtain
\[
v_x
=
\frac{v(v-1)-6x^2m}{2u}
\geq
\frac{-v-6x^2m}{2u}
\geq
\frac{-v}{4u}.
\]
Since \(u_x=v<0\), we have
\[
\frac{dv}{du}
=
\frac{v_x}{u_x}
\leq
-\frac{1}{4u},
\qquad
\frac{d(-v)}{du}\geq\frac{1}{4u}.
\]
Integrating from \(x\) to \(x_1\) gives
\[
-v(x_1)+v(x)
\geq
\frac14\log\frac{u(x_1)}{u(x)}.
\]
As \(x\to L^-\), we have \(u(x)\to0\) and \(v(x)\to0\), so the left-hand
side tends to
\[
-v(x_1)<+\infty,
\]
while the right-hand side tends to \(+\infty\), a contradiction.
Therefore \(L=+\infty\).

This completes the proof.
\end{proof}

\subsection{Regularity of the solution}
We have so far constructed a \(C^1\) solution and determined its maximal extension. We now upgrade the regularity. Near the origin this requires using the fixed-point formulation, which captures the cancellation of the apparent singular terms. Away from the origin, smoothness follows from the ODE system because \(x\) and \(g\) stay away from zero on compact subintervals of \((-L,L)\).

\begin{proposition}
\label{proposition: regularity_solution}
Let $a\in[2/3,1]$, and let $(m,v,g)\in C^1((-L,L))$ be the solution obtained in
Theorem~\ref{theorem: existence of fixed point} and
Corollary~\ref{corollary: global_extension}. Then
\[
m,\ v,\ g\in C^\infty((-L,L)).
\]
\end{proposition}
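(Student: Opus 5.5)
We split the interval into the region away from the origin and a neighborhood of the origin. Away from the origin, the argument is a standard ODE bootstrap. On any compact subinterval $K\subset(-L,L)\setminus\{0\}$ we have $|x|\geq c>0$, and by Corollary~\ref{corollary: global_extension} and Lemma~\ref{lemma: monotonicity_extension} also $g\geq c>0$. Hence the right-hand side $F(x,m,v,g)$ of \eqref{eqt:gmu_for_extension} is a $C^\infty$ (indeed real-analytic) function of its arguments there. If $(m,v,g)\in C^k(K)$, then $(m,v,g)'=F(x,m,v,g)\in C^k(K)$, so $(m,v,g)\in C^{k+1}(K)$. Induction from $k=1$ gives smoothness on $(-L,L)\setminus\{0\}$. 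By evenness it suffices to treat $x>0$.

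Near the origin we instead work with the fixed-point representation on $[0,\ell]$, where $v=\calR_a(v)$, $g=\calG(v)$, and $\phi=\Phi(v)$. The key device is that the averaging operators preserve smoothness. If $f\in C^k([0,\ell])$, then
\[
\frac1x\int_0^x f(y)\idiff y=\int_0^1 f(xs)\idiff s\in C^k([0,\ell]),
\]
and similarly
\[
x^{-\alpha}\int_0^x y^{\alpha-1}h(y)\idiff y=\int_0^1 s^{\alpha-1}h(xs)\idiff s
\]
inherits the $C^k$ regularity of $h$. We rewrite the singular pieces in this form.

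\begin{itemize}
\item Since $g(0)=1$, we have $g-1=\int_0^1 (v(xs)-1)\idiff s$. Because $v$ is even with $v(x)=1+O(x^2)$, write $v-1=x\,w$ with $w$ as regular as $v'$ (via $w(x)=\int_0^1 v'(xs)\idiff s$). Then $(g-1)/(yg)$ has the same regularity as $v'$, and $\log\phi$ gains one derivative by integration.
\item Substituting $y=xs$ in $\calR_a$ gives
\[
1-v(x)=\frac{x^2}{a}\,g(x)^{1/2}\phi(x)^{\frac{2-3a}{2a}}\int_0^1 s^{\frac1a-1}\,g(xs)^{\frac{4-3a}{2a}}\phi(xs)^{\frac{2-a}{2a}}\idiff s .
\]
The powers are harmless because $g,\phi\in[\tfrac12,1]$, and $s^{1/a-1}$ is integrable.
\end{itemize}

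Thus, if $v\in C^k([0,\ell])$ (starting from $C^1$), then $g\in C^k$, $\phi\in C^{k}$ (in fact $C^{k+1}$ off the worry about $w$), and the right-hand side above is $C^k$. This only returns $C^k$, so a more careful bootstrap is required.

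The hard step is gaining a derivative at $x=0$. For this we use the differentiated equation
\[
x\,v'=\frac{(av+3a-2)(v-1)}{2ag}-\frac{2x^2 m}{ag}.
\]
We write $v-1=x^2 h$ and treat it as a singular linear ODE of Fuchs type for $h$, namely
\[
x h'=-\lambda(x)\,h+\text{smooth},
\]
where, using $v(0)=g(0)=1$, the indicial coefficient is $\lambda(0)=2-\frac{4a-2}{2a}=\frac1a>0$. The solution bounded at $0$ is then given by
\[
h(x)=x^{-\Lambda}\int_0^x y^{\Lambda-1}(\ldots)\idiff y,
\]
which is exactly the averaging form. An operator of the form $f\mapsto\int_0^1 s^{c-1}f(xs)\idiff s$ with $c>0$ commutes with $x\partial_x$, and hence preserves $C^k$. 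Since the coefficients depend on $g$, $\phi$, and $m$, which are one degree smoother in the averaged sense, we expect to gain one derivative per iteration. If $v\in C^k$, then $g$, $m$ and the coefficients lie in $C^k$, and then $x h'\in C^k$ with $h$ given by the averaging formula yields $h\in C^k$ and $v\in C^{k+1}$.

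Alternatively, one can differentiate $k$ times and show by induction that $v^{(k)}$ satisfies an equation of the same Fuchs type,
\[
x\,(v^{(k)})'+(k-\mu)\,v^{(k)}=C^{0}\text{ terms}.
\]
This is solvable with a bounded, continuous $v^{(k+1)}$ because $k+1$ exceeds the indicial root. Verifying that the indicial exponents never resonate, i.e.\ that the relevant power $x^{2-1/a}$ coming from the homogeneous solution is excluded by choosing the bounded solution, is the main obstacle. We will handle it by noting that the fixed point $v$ is already known to be the particular solution without the $x^{2-1/a}$ component, since $1-v=O(x^2)$ while $2-1/a<2$ for $a<1$.

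Gluing the two regions, we obtain $m,v,g\in C^\infty((-L,L))$, with $m$ following from its explicit formula in $g$ and $\phi$.
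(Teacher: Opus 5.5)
Your argument away from the origin is fine and matches the paper. At $x=0$, however, there is a genuine gap: no step in the proposal actually gains a derivative.

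The inference ``$h\in C^k$, hence $v=1+x^2h\in C^{k+1}$'' is false, because multiplying by $x^2$ adds no derivatives. The Fuchs inversion cannot supply the gain either. The operator $F\mapsto\int_0^1 s^{c-1}F(xs)\,\mathrm{d}s$ preserves $C^k$ but never improves it; it sends $|x|^{k+1/2}$ to a multiple of itself. So $h$ is exactly as regular as the coefficient and forcing in its equation. These are not ``one degree smoother'': the factor $av+3a-2$ contains $v$ itself, and written out the equation is of Riccati type,
\[
xh'=\Bigl(\frac{4a-2}{2ag}-2\Bigr)h+\frac{x^2h^2}{2g}-\frac{2m}{ag},
\]
whose right-hand side is only as regular as $h$.

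The $k$-times differentiated variant fails for the same reason. The equation $xw'+(k-\mu)w=R_k$ with $R_k\in C^0$ gives only a bounded continuous $w=v^{(k)}$. Continuity of $v^{(k+1)}=\bigl(R_k-(k-\mu)v^{(k)}\bigr)/x$ at $0$ needs $R_k\in C^1$, i.e.\ $g,m\in C^{k+1}$, which is precisely the gain in question. Knowing only $v\in C^k$ does not give it: for $v=1-|x|^{k+1/2}$ with $k\geq 2$, the average $g=\int_0^1 v(xs)\,\mathrm{d}s$ is no smoother than $v$. The indicial-root discussion only selects which solution you have; it says nothing about its regularity.

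The missing idea, which is how the paper closes the bootstrap, has two parts. First, keep the integrated formula from your second bullet. Its right-hand side involves only $g$ and $\phi$, because the $v$ in $av+3a-2$ has been absorbed via $v=(xg)'$. Second, induct on $f:=(1-v)/x^2$ instead of on $v$. If $f\in C^k$ and $F(x):=\int_0^x f$, integrating $\int_0^x y^2 f$ by parts gives
\[
g(x)=1-xF(x)+2x\int_0^1 tF(tx)\,\mathrm{d}t,\qquad
\frac{g(x)-1}{x}=-F(x)+2\int_0^1 tF(tx)\,\mathrm{d}t .
\]
Both are in $C^{k+1}$, hence $\phi\in C^{k+2}$, and
\[
f(x)=\frac1a\,g(x)^{\frac12}\phi(x)^{\frac{2-3a}{2a}}\int_0^1 s^{\frac1a-1}g(xs)^{\frac{4-3a}{2a}}\phi(xs)^{\frac{2-a}{2a}}\,\mathrm{d}s\in C^{k+1}.
\]
The base case $f\in C^0$ is read off from the same formula. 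Your first pass concluded ``only $C^k$'' because you measured $g$ against $v$. Measured against $f$, $g$ is one derivative smoother, and that is the whole gain.
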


\begin{proof}
We establish the regularity near the origin using the fixed-point equation
and away from the origin using the ODE system.

\medskip
\noindent
\textit{Step 1: smoothness on $(-\ell,\ell)$.}

On $[-\ell,\ell]$, the function $v$ is a fixed point of $\calR_a$, hence
\[
v(x)=1-\frac1a\,x^{2-\frac1a} g(x)^{\frac12}\phi(x)^{\frac{2-3a}{2a}}
\int_0^x y^{\frac1a-1}g(y)^{\frac{4-3a}{2a}}\phi(y)^{\frac{2-a}{2a}}\idiff y,
\]
where
\[
g(x)=\frac1x\int_0^x v(y)\idiff y,
\qquad
\phi(x)=\exp\!\left(\int_0^x\frac{g(y)-1}{y\,g(y)}\idiff y\right),
\qquad
m(x)=\frac12\,g(x)^{\frac2a}\phi(x)^{\frac{2(1-a)}{a}}.
\]

Since $v\in\bbD_1$, we know that $v$ is even and Lipschitz on $[-\ell,\ell]$, and
\[
1-\eta x^2\leq v(x)\leq 1.
\]
In particular,
\[
1-v(x)=O(x^2)\qquad (x\to0).
\]
Hence the function
\[
f(x):=\frac{1-v(x)}{x^2}
\]
extends continuously to $x=0$ by its limit there, so that
\[
v(x)=1-x^2f(x),\qquad f\in C([-\ell,\ell]).
\]

We prove by induction that
\[
f\in C^k([-\ell,\ell])\quad\Longrightarrow\quad f\in C^{k+1}([-\ell,\ell]).
\]
Since the case $k=0$ already holds, the above induction argument will imply
$f\in C^\infty([-\ell,\ell])$ and therefore
\[
v(x)=1-x^2f(x)\in C^\infty([-\ell,\ell]).
\]

Assume now that $f\in C^k([-\ell,\ell])$ for some $k\geq0$.
Using
\[
v(x)=1-x^2f(x),
\]
we rewrite $g$ as
\[
g(x)=\frac1x\int_0^x v(y)\idiff y
=1-\frac1x\int_0^x y^2f(y)\idiff y.
\]
Let
\[
F(x):=\int_0^x f(y)\idiff y.
\]
Since $f\in C^k([-\ell,\ell])$, we have
\[
F\in C^{k+1}([-\ell,\ell]).
\]
Integrating by parts gives
\[
\int_0^x y^2f(y)\idiff y
=
x^2F(x)-2\int_0^x yF(y)\idiff y.
\]
Hence
\[
g(x)
=
1-xF(x)+\frac{2}{x}\int_0^x yF(y)\idiff y.
\]
After the change of variables $y=tx$ in the last integral we obtain
\[
g(x)=1-xF(x)+2x\int_0^1 t\,F(tx)\,dt.
\]
Since $F\in C^{k+1}([-\ell,\ell])$, differentiation under the integral sign
shows that
\[
\int_0^1 t\,F(tx)\,dt\in C^{k+1}([-\ell,\ell]),
\]
hence
\[
g\in C^{k+1}([-\ell,\ell]),\qquad \frac{g-1}{x}\in C^{k+1}([-\ell,\ell]).
\]

Next consider $\phi$. Note that
\[
\frac{g(x)-1}{xg(x)}
\]
belongs to $C^{k+1}([-\ell,\ell])$ since $g>0$ on $[-\ell,\ell]$.
Therefore
\[
\phi(x)=\exp\!\left(\int_0^x\frac{g(y)-1}{y\,g(y)}\idiff y\right)
\in C^{k+2}([-\ell,\ell]),
\]
and in particular
\[
\phi\in C^{k+1}([-\ell,\ell]).
\]

Since $g,\phi>0$, the identity
\[
m(x)=\frac12\,g(x)^{\frac2a}\phi(x)^{\frac{2(1-a)}{a}}
\]
gives
\[
m\in C^{k+1}([-\ell,\ell]).
\]

We now return to the fixed-point equation. Define
\[
A(x):=g(x)^{\frac{4-3a}{2a}}\phi(x)^{\frac{2-a}{2a}}.
\]
Then
\[
A\in C^{k+1}([-\ell,\ell]).
\]
Since $v$ is even, the functions $g$, $\phi$, and $A$ are also even.
Define
\[
B(x):=x^{-\frac1a}\int_0^x y^{\frac1a-1}A(y)\idiff y,
\]
with the value at $x=0$ understood through the limit.
After the change of variables $y=tx$ we obtain
\[
B(x)=\int_0^1 t^{\frac1a-1}A(tx)\,dt.
\]
Since $a\in[2/3,1]$, the weight $t^{\frac1a-1}$ is integrable, hence
\[
B\in C^{k+1}([-\ell,\ell]).
\]
The fixed-point formula becomes
\[
v(x)=1-\frac1a\,x^2 g(x)^{\frac12}\phi(x)^{\frac{2-3a}{2a}}B(x),
\]
so that
\[
f(x)=\frac{1-v(x)}{x^2}
=\frac1a\,g(x)^{\frac12}\phi(x)^{\frac{2-3a}{2a}}B(x).
\]
Since $g,\phi,B\in C^{k+1}([-\ell,\ell])$, we conclude that
\[
f\in C^{k+1}([-\ell,\ell]).
\]

This completes the bootstrap and shows that
\[
m,v,g\in C^\infty((-\ell,\ell)).
\]

\medskip
\noindent
\textit{Step 2: smoothness on $(\ell/2,L)$.}

On $(\ell/2,L)$ the functions $(m,v,g)$ satisfy the ODE system \eqref{eqt:gmu_for_extension}
\begin{equation}
\begin{aligned}
m_x &= \frac{2(v-g)+2(1-a)(g-1)}{axg}\,m,\\
v_x &= \frac{(av+3a-2)(v-1)-4x^2 m}{2axg},\\
g_x &= \frac{v-g}{x}.
\end{aligned}
\end{equation}
Since $x>\ell/2>0$ and $g(x)>0$ on $(\ell/2,L)$, the right-hand side of
\eqref{eqt:gmu_for_extension} is a smooth function of
$(x,m,v,g)$. Standard regularity theory for ODEs therefore implies that any
$C^1$ solution of this system is in fact $C^\infty$ on $(\ell/2,L)$.
Hence
\[
m,\ v,\ g\in C^\infty((\ell/2,L)).
\]

Now combining the two regions and using the symmetry of the construction gives
\[
m,\ v,\ g\in C^\infty((-L,L)),
\]
which is desired.
\end{proof}

For \(a>2/3\), the half-period endpoint \(L\) is finite. To understand the regularity of the periodic extension, we need more precise asymptotics as \(x\to L^-\). The following proposition gives the leading-order behavior of \(g\), \(m\), \(v\), and \(v_x\) at the endpoint.

\begin{proposition}
\label{proposition: boundary_asymptotics}

Let $a\in(2/3,1]$, and let $(m,v,g)\in C^\infty((-L,L))$ be the solution obtained in
Theorem~\ref{theorem: existence of fixed point} and
Corollary~\ref{corollary: global_extension}
with $L<+\infty$. Then, as $x\to L^-$,

\begin{enumerate}
\item The function $g$ has the asymptotic behavior
\[
g(x)=\frac{3a-2}{aL}(L-x)+o(L-x).
\]

\item
There exists a constant
\[
c_m>0
\]
such that
\[
m(x)=c_m\,(L-x)^{\frac2a+\frac{2(1-a)}{3a-2}}+o\big((L-x)^{\frac2a+\frac{2(1-a)}{3a-2}}\big).
\]

\item
There exists a constant $c_v\in\mathbb{R}$ such that
\[
v(x)=-\frac{3a-2}{a}+c_v\,(L-x)^{\frac{2a-1}{3a-2}}
+o\!\left((L-x)^{\frac{2a-1}{3a-2}}\right).
\]

\item
The same constant $c_v$ satisfies
\[
v_x(x)=-\frac{2a-1}{3a-2}\,c_v\,
(L-x)^{\frac{1-a}{3a-2}}
+o\!\left((L-x)^{\frac{1-a}{3a-2}}\right).
\]
In particular, if $a\in(2/3,1)$, then
\[
v_x(x)\to0
\qquad\text{as }x\to L^-.
\]
\end{enumerate}
\end{proposition}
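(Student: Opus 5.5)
We work with the quantities already introduced in the proof of Corollary~\ref{corollary: global_extension}: $u=xg$, $u_x=v$, $\phi$ and $q=av+3a-2$. We also use the representation
\[
m=\tfrac12 g^{2/a}\phi^{2(1-a)/a}.
\]

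\textbf{Step 1: asymptotics of $g$.}
By Corollary~\ref{corollary: global_extension}, $v(x)\to v_*:=-(3a-2)/a<0$ and $u(x)=xg(x)\to0$ as $x\to L^-$. Since $u(L^-)=0$,
\[
u(x)=-\int_x^L v(y)\idiff y=\frac{3a-2}{a}(L-x)+o(L-x).
\]
Dividing by $x\to L$ gives (1).

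\textbf{Step 2: asymptotics of $m$.}
Write $\log\phi(x)=\log\phi(x_1)+\int_{x_1}^x \bigl(1/y-1/(yg)\bigr)\idiff y$. The term $1/y$ is harmless, and by (1) the main part is
\[
1/(yg)=\frac{a}{(3a-2)(L-y)}\,(1+o(1)).
\]
So $\log\phi=\frac{a}{3a-2}\log(L-x)+O(1)+o(\log(L-x))$, which only gives the power up to $o(1)$ in the exponent. To obtain a genuine constant $c_m$ we need the remainder $1/(yg)-\frac{a}{(3a-2)(L-y)}$ to be integrable near $L$. That requires a rate $g=\frac{3a-2}{aL}(L-x)\bigl(1+O((L-x)^\delta)\bigr)$, which in turn follows from $v-v_*=O((L-x)^\delta)$.

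So we first obtain a rough power rate for $v-v_*$ from the $q$-representation. Set $s=L-x$. From \eqref{eq:q_representation_in_cor}, $q=\sqrt{g\phi}\,\bigl(C-\int_\ell^x(\cdots)\bigr)$. With the crude bounds $g\sim s$ and $\phi\lesssim s^{a/(3a-2)-\epsilon}$, we get $q=O(s^{\delta})$ for some $\delta>0$. Bootstrapping this into $g$ and $\phi$ yields $\phi(x)=c_\phi\, s^{a/(3a-2)}(1+o(1))$ with $c_\phi>0$. Then
\[
m=\tfrac12 g^{2/a}\phi^{2(1-a)/a}
\]
gives exactly the exponent $\frac2a+\frac{2(1-a)}{3a-2}$ in (2), with $c_m>0$.

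\textbf{Step 3: asymptotics of $v$.}
Now use $q=\sqrt{g\phi}\,\bigl(K-I(x)\bigr)$, where
\[
I(x)=\int_\ell^x y\,g^{2/a-3/2}\phi^{(4-5a)/(2a)}\idiff y .
\]
The integrand behaves like
\[
s^{\,2/a-3/2+\frac{4-5a}{2(3a-2)}}=s^{\frac{4-2a-a^2}{a(3a-2)}}\cdot(\text{const}).
\]
We check that this exponent exceeds $-1$ for $a\in(2/3,1]$, so $I(L)$ is finite. Hence $q=(K-I(L))\sqrt{g\phi}+o(\sqrt{g\phi})$. Since
\[
\sqrt{g\phi}\sim s^{\frac12\left(1+\frac{a}{3a-2}\right)}=s^{\frac{2a-1}{3a-2}},
\]
this gives (3) with $c_v=(K-I(L))\,c/a$; the sign of $c_v$ is not needed.

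The delicate point is that the remainder $\sqrt{g\phi}\,(I(L)-I(x))$ is of strictly higher order. It is bounded by a larger power of $s$, namely $s^{\frac{2a-1}{3a-2}+\frac{4-2a-a^2}{a(3a-2)}+1}$.

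\textbf{Step 4: asymptotics of $v_x$.}
Differentiating (3) termwise is not legitimate. Instead use the ODE
\[
v_x=\frac{(av+3a-2)(v-1)-4x^2m}{2axg}=\frac{q(v-1)-4x^2m}{2au}.
\]
Insert $q=ac_v s^{\frac{2a-1}{3a-2}}(1+o(1))$, $v-1\to-(4a-2)/a$, and $u\sim\frac{3a-2}{a}s$. The $m$ term is $O(s^{\frac2a+\dots})$; we check that it is of higher order than $s^{\frac{2a-1}{3a-2}}$. The leading part is
\[
\frac{ac_v s^{\frac{2a-1}{3a-2}}\cdot\left(-\frac{2(2a-1)}{a}\right)}{2(3a-2)s}=-\frac{2a-1}{3a-2}\,c_v\, s^{\frac{1-a}{3a-2}},
\]
which is (4). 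For $a<1$ the exponent is positive, so $v_x\to0$.

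The main obstacle is Step 2. It requires the bootstrap from the crude logarithmic asymptotics to an integrable error, so that the limit constants $c_\phi$, $c_m$ and $I(L)$ exist. Everything else is bookkeeping of exponents.
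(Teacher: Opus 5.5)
Your proposal is correct and follows the paper's proof essentially step for step: the endpoint slope of $g$, a crude rate $q=O((L-x)^{\delta})$ from the $q$-representation, and the bootstrap making $1/(xg)-\frac{a}{(3a-2)(L-x)}$ integrable so that $\phi=c_\phi(L-x)^{a/(3a-2)}(1+o(1))$, followed by $m$, the finite limit of the bracket for $v$, and $v_x$ read off from the ODE. Two harmless slips: the integrand exponent is $\frac2a-\frac32+\frac{4-5a}{2(3a-2)}=\frac{(1-a)(7a-4)}{a(3a-2)}\geq 0$ rather than $\frac{4-2a-a^2}{a(3a-2)}$, and for $a>4/5$ (where $\frac{4-5a}{2a}<0$) the crude rate also needs the lower bound $\phi\gtrsim(L-x)^{a/(3a-2)+\epsilon}$, which your logarithmic asymptotics already supplies.
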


\begin{proof}
By Corollary~\ref{corollary: global_extension}, we already know that
\[
g(x)\to0,\qquad m(x)\to0,\qquad v(x)\to-\frac{3a-2}{a}
\qquad\text{as }x\to L^-.
\]

We first prove \textup{(1)}. Since $g(x)\to0$ and $L-x\to0$, l'Hospital's rule gives
\[
\lim_{x\to L^-}\frac{g(x)}{L-x}
=
\lim_{x\to L^-}\frac{g_x(x)}{-1}.
\]
Using
\[
g_x=\frac{v-g}{x},
\]
we obtain
\[
\lim_{x\to L^-}\frac{g(x)}{L-x}
=
-\lim_{x\to L^-}\frac{v(x)-g(x)}{x}
=
-\frac{-\frac{3a-2}{a}-0}{L}
=
\frac{3a-2}{aL}.
\]
This proves \textup{(1)}.

We next introduce
\[
\phi(x):=\exp\!\left(\int_0^x\frac{g(y)-1}{y\,g(y)}\idiff y\right).
\]
Since
\[
\frac{\phi_x(x)}{\phi(x)}=\frac{g(x)-1}{x\,g(x)},
\]
and $g(x)\to0$, we have $\phi(x)\to0$ as $x\to L^-$. Hence both
$\log\phi(x)$ and $\log(L-x)$ tend to $-\infty$, so l'Hospital's rule yields
\[
\lim_{x\to L^-}\frac{\log\phi(x)}{\log(L-x)}
=
\lim_{x\to L^-}
\frac{\phi_x(x)/\phi(x)}{-1/(L-x)}
=
-\lim_{x\to L^-}(L-x)\frac{g(x)-1}{x\,g(x)}.
\]
By \textup{(1)}, we have
\[
x\,g(x)
=
L\cdot \frac{3a-2}{aL}(L-x)+o(L-x)
=
\frac{3a-2}{a}(L-x)+o(L-x),
\]
and therefore
\[
-\,(L-x)\frac{g(x)-1}{x\,g(x)}
\to
\frac{a}{3a-2}.
\]
Thus
\begin{equation}\label{equation: phi_boundary_log}
\lim_{x\to L^-}\frac{\log\phi(x)}{\log(L-x)}=\frac{a}{3a-2}.
\end{equation}

We now obtain a first estimate for
\[
q(x):=av(x)+3a-2.
\]
From the representation formula proved in
Corollary~\ref{corollary: global_extension}, we have
\[
q(x)
=
\sqrt{g(x)\phi(x)}
\left(
\frac{q(\ell)}{\sqrt{g(\ell)\phi(\ell)}}
-
\int_\ell^x
y\,g(y)^{\frac2a-\frac32}\phi(y)^{\frac{4-5a}{2a}}\idiff y
\right).
\]
By \textup{(1)} and \eqref{equation: phi_boundary_log},
\[
\lim_{x\to L^-}
\frac{
\log\!\left(g(x)^{\frac2a-\frac32}\phi(x)^{\frac{4-5a}{2a}}\right)
}{
\log(L-x)
}
=
\frac2a-\frac32+\frac{4-5a}{2(3a-2)}
=
\frac{(1-a)(7a-4)}{a(3a-2)}
\geq0.
\]
Hence the integral
\[
\int_\ell^x
y\,g(y)^{\frac2a-\frac32}\phi(y)^{\frac{4-5a}{2a}}\idiff y
\]
remains bounded as $x\to L^-$. It follows that there exists some $\delta>0$ such that
\[
q(x)=O\big((L-x)^\delta\big)
\qquad\text{as }x\to L^-.
\]

We now strengthen the asymptotics of $\phi$. Since
\[
(xg)'=v=-\frac{3a-2}{a}+\frac{q}{a},
\]
and
\[
\lim_{x\to L^-}xg(x)=0,
\]
we obtain
\[
xg(x)
=
-\int_x^L v(y)\idiff y
=
\frac{3a-2}{a}(L-x)+O\big((L-x)^{1+\delta}\big).
\]
Consequently,
\[
\frac{1}{xg(x)}
=
\frac{a}{(3a-2)(L-x)}+O\big((L-x)^{-1+\delta}\big).
\]
Since $\delta>0$, the error term is integrable near $x=L$.

Now
\[
\frac{d}{dx}\log\!\left(\frac{\phi(x)}{(L-x)^{\frac{a}{3a-2}}}\right)
=
\frac{\phi_x(x)}{\phi(x)}+\frac{a}{(3a-2)(L-x)}
=
\frac{g(x)-1}{xg(x)}+\frac{a}{(3a-2)(L-x)}.
\]
Using
\[
\frac{g(x)-1}{xg(x)}=\frac{1}{x}-\frac{1}{xg(x)},
\]
we get
\[
\frac{d}{dx}\log\!\left(\frac{\phi(x)}{(L-x)^{\frac{a}{3a-2}}}\right)
=
\frac{1}{x}
-\frac{1}{xg(x)}
+\frac{a}{(3a-2)(L-x)}
=
\frac{1}{x}+O\big((L-x)^{-1+\delta}\big).
\]
The right-hand side is integrable near $x=L$. Hence the logarithmic expression
\[
\log\!\left(\frac{\phi(x)}{(L-x)^{\frac{a}{3a-2}}}\right)
\]
has a finite limit as $x\to L^-$. Since $\phi(x)>0$, there exists a constant
\[
c_\phi\in(0,+\infty)
\]
such that
\[
\phi(x)=c_\phi\,(L-x)^{\frac{a}{3a-2}}\bigl(1+o(1)\bigr).
\]

We now prove \textup{(2)}. Since
\[
m(x)=\frac12\,g(x)^{\frac2a}\phi(x)^{\frac{2(1-a)}{a}},
\]
combining \textup{(1)} with the asymptotics of $\phi$ gives
\[
m(x)
=
\frac12
\left(
\frac{3a-2}{aL}(L-x)(1+o(1))
\right)^{\frac2a}
\left(
c_\phi (L-x)^{\frac{a}{3a-2}}(1+o(1))
\right)^{\frac{2(1-a)}{a}}.
\]
Therefore
\[
m(x)=c_m\,(L-x)^{\frac2a+\frac{2(1-a)}{3a-2}}\bigl(1+o(1)\bigr),
\]
where
\[
c_m=\frac12\left(\frac{3a-2}{aL}\right)^{\frac2a}c_\phi^{\frac{2(1-a)}{a}}>0.
\]
This proves \textup{(2)}.

We now prove \textup{(3)}. By \textup{(1)} and the asymptotics of $\phi$,
\[
\sqrt{g(x)\phi(x)}
=
\sqrt{\frac{3a-2}{aL}\,c_\phi}\,
(L-x)^{\frac12+\frac{a}{2(3a-2)}}(1+o(1))
=
\sqrt{\frac{3a-2}{aL}\,c_\phi}\,
(L-x)^{\frac{2a-1}{3a-2}}(1+o(1)).
\]
Moreover, by \textup{(1)} and the asymptotics of $\phi$, we have
\[
y\,g(y)^{\frac2a-\frac32}\phi(y)^{\frac{4-5a}{2a}}
=
O\!\left((L-y)^{\frac{(1-a)(7a-4)}{a(3a-2)}}\right)
\qquad\text{as }y\to L^-,
\]
and the exponent on the right-hand side is nonnegative. Hence the integral
\[
\int_\ell^L
y\,g(y)^{\frac2a-\frac32}\phi(y)^{\frac{4-5a}{2a}}\idiff y
\]
converges, so the bracket in the representation formula for $q$ has a finite limit:
\[
\frac{q(\ell)}{\sqrt{g(\ell)\phi(\ell)}}
-
\int_\ell^x
y\,g(y)^{\frac2a-\frac32}\phi(y)^{\frac{4-5a}{2a}}\idiff y
\longrightarrow c_*
\qquad\text{as }x\to L^-,
\]
for some $c_*\in\mathbb{R}$. Therefore
\[
q(x)
=
c_*
\sqrt{\frac{3a-2}{aL}\,c_\phi}\,
(L-x)^{\frac{2a-1}{3a-2}}
+
o\!\left((L-x)^{\frac{2a-1}{3a-2}}\right).
\]
Since
\[
q(x)=a\left(v(x)+\frac{3a-2}{a}\right),
\]
we conclude that there exists a constant $c_v\in\mathbb{R}$ such that
\[
v(x)=-\frac{3a-2}{a}+c_v\,(L-x)^{\frac{2a-1}{3a-2}}
+o\!\left((L-x)^{\frac{2a-1}{3a-2}}\right).
\]
This proves \textup{(3)}.

Finally we prove \textup{(4)}. Using
\[
v_x=\frac{(av+3a-2)(v-1)-4x^2m}{2axg},
\]
together with
\[
av+3a-2
=
a\,c_v\,(L-x)^{\frac{2a-1}{3a-2}}
+
o\!\left((L-x)^{\frac{2a-1}{3a-2}}\right),
\]
\[
v(x)-1\to -\frac{2(2a-1)}{a},
\]
and
\[
2axg(x)=2(3a-2)(L-x)+o(L-x),
\]
we obtain
\[
\frac{(av+3a-2)(v-1)}{2axg}
=
-\frac{2a-1}{3a-2}\,c_v\,
(L-x)^{\frac{1-a}{3a-2}}
+o\!\left((L-x)^{\frac{1-a}{3a-2}}\right).
\]
On the other hand, by \textup{(1)} and \textup{(2)},
\[
\frac{4x^2m}{2axg}
=
O\!\left((L-x)^{\frac2a+\frac{2(1-a)}{3a-2}-1}\right).
\]
Since
\[
\frac2a+\frac{2(1-a)}{3a-2}-1
>
\frac{1-a}{3a-2},
\]
the contribution of the $m$-term is of higher order. Therefore
\[
v_x(x)=-\frac{2a-1}{3a-2}\,c_v\,
(L-x)^{\frac{1-a}{3a-2}}
+o\!\left((L-x)^{\frac{1-a}{3a-2}}\right).
\]
This proves \textup{(4)}. In particular, if $a\in(2/3,1)$, then
\[
\frac{1-a}{3a-2}>0,
\]
and hence
\[
v_x(x)\to0
\qquad\text{as }x\to L^-.
\]
The proof is complete.
\end{proof}

We now return from the auxiliary variables \((m,v,g)\) to the original profile variables \((u,\theta,\omega)\). The endpoint asymptotics obtained above allow us to reflect and periodically extend the local profile, producing a symmetric periodic solution of the profile system.
\begin{corollary}\label{corollary: periodic_profile_extension}
Let \(a\in(2/3,1]\), and let \((m,v,g)\) be the solution obtained in
Theorem~\ref{theorem: existence of fixed point} and
Corollary~\ref{corollary: global_extension}. Define, for \(x\in(-L,L)\),
\[
    u(x):=xg(x),\qquad
    \theta(x):=x^2m(x),\qquad
    \omega(x):=-\frac12 u_{xx}(x),
\]
and set
\[
    c_l=0,\qquad c_\omega=a-1,\qquad c_\theta=2(a-1).
\]
Then \((u,\theta,\omega)\) gives a symmetric \(2L\)-periodic solution of equation \eqref{eqt: case1},
where \(u\) and \(\omega\) are odd and \(\theta\) is even. Moreover, after  periodic extension the following regularity holds. If \(a=1\), then the extended solution is smooth on the whole real line, \[ u,\theta,\omega\in C^\infty(\mathbb R). \]
If \(a\in(2/3,1)\), the periodic extension satisfies \[
    \omega\in
    C^{\lceil\beta\rceil-1,\beta-\lceil\beta\rceil+1}_{\mathrm{per}}(\mathbb R)
    \cap C^\infty\bigl(\mathbb R\setminus\{(2k+1)L:k\in\mathbb Z\}\bigr),
\]
\[
    u\in
    C^{1+\lceil\beta\rceil,\beta-\lceil\beta\rceil+1}_{\mathrm{per}}(\mathbb R)
    \cap C^\infty\bigl(\mathbb R\setminus\{(2k+1)L:k\in\mathbb Z\}\bigr),
\]
\[
    \theta\in
    C^{\lceil\gamma\rceil-1,\gamma-\lceil\gamma\rceil+1}_{\mathrm{per}}(\mathbb R)
    \cap C^\infty\bigl(\mathbb R\setminus\{(2k+1)L:k\in\mathbb Z\}\bigr),
\]
\[ \beta:=\frac{1-a}{3a-2}, \qquad \gamma:=\frac2a+\frac{2(1-a)}{3a-2}.\]
\end{corollary}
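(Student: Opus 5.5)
We will first check that \((u,\theta,\omega)\) solves \eqref{eqt: case1} on \((-L,L)\), then reflect and periodize, and finally read off the regularity at the junction points \((2k+1)L\) from the endpoint asymptotics in Proposition~\ref{proposition: boundary_asymptotics}.

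\emph{Step 1: the equations on \((-L,L)\).} By Proposition~\ref{proposition: regularity_solution}, \(m,v,g\in C^\infty((-L,L))\), so \(u=xg\) and \(\theta=x^2m\) are smooth there, with \(u_x=(xg)_x=v\). Reversing the algebra that led from \eqref{eqt: case1} to \eqref{eqt:gmu} with \(c_\theta=2(a-1)\), the first line of \eqref{eqt:gmu_for_extension} becomes \eqref{case1a} after multiplication by \(x^2\). The second line is the integrated form of \eqref{case1b}; differentiating it once, with \(\omega=-\tfrac12 u_{xx}\), recovers \eqref{case1b} with \(c_\omega=c_\theta/2=a-1\). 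The ansatz relation \(2c_\omega+2c_l=c_\theta\) is then consistent. At \(x=0\) the identities hold by continuity, using the fixed-point representation. Since \(v\) is even (it lies in \(\bbD_1\), and \eqref{eqt:gmu_for_extension} respects the symmetry), \(g\) and \(m\) are even. Hence \(u\) is odd, \(\theta\) is even and \(\omega\) is odd.

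\emph{Step 2: reflection and periodization.} We define the extension on \((L,2L)\) by \(u(L+s)=-u(L-s)\), \(\theta(L+s)=\theta(L-s)\), \(\omega(L+s)=-\omega(L-s)\), and then extend \(2L\)-periodically. This is compatible with oddness about \(0\), because oddness about \(0\) together with \(2L\)-periodicity forces exactly this reflection rule about \(L\). Since \(c_l=0\), the system \eqref{eqt: case1} is invariant under \(x\mapsto 2L-x\), \(u\mapsto -u\), \(\omega\mapsto-\omega\), \(\theta\mapsto\theta\): each of \(a u\,\partial_x\), \(u_x\), \(u_{xx}\) transforms consistently. Therefore the extension solves the equations away from the points \((2k+1)L\). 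At these points, Corollary~\ref{corollary: global_extension} and Proposition~\ref{proposition: boundary_asymptotics} give \(u\to0\), \(\theta\to0\), and \(u_x=v\to-\frac{3a-2}{a}\) from both sides (\(u_x\) is even about \(L\)). For \(a<1\) they also give \(u_{xx}=v_x\to0\), so \(\omega\to0\). Hence the glued functions are continuous, and the equations hold at \(L\) in the limiting (classical) sense.

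\emph{Step 3: regularity at \(x=L\), the main obstacle.} By Proposition~\ref{proposition: boundary_asymptotics}, \(\omega=-\tfrac12 v_x\sim c\,(L-x)^{\beta}\) and \(\theta=x^2m\sim c'(L-x)^{\gamma}\). This identifies the Hölder exponents \(\beta\) and \(\gamma\), but only at leading order. The classes \(C^{\lceil\beta\rceil-1,\beta-\lceil\beta\rceil+1}\) and \(C^{\lceil\gamma\rceil-1,\gamma-\lceil\gamma\rceil+1}\) require derivative asymptotics \(\partial_x^j\omega=O((L-x)^{\beta-j})\) and \(\partial_x^j\theta=O((L-x)^{\gamma-j})\), together with matching behaviour under the odd/even reflection. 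This is the step I expect to be hardest.

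To obtain the derivative bounds, I would set \(s=L-x\) and write \(xg=\frac{3a-2}{a}s\,(1+h)\) and \(q=av+3a-2=s^{\beta+1}p\). In these variables \eqref{eqt:gmu_for_extension} becomes a system \(s\,Y_s=A Y+N(s,s^{\beta+1},Y)\) of Briot--Bouquet type with a regular singular point at \(s=0\). The representation formulas from Corollary~\ref{corollary: global_extension} show that \(h,p\) are continuous up to \(s=0\), and that the \(\theta\)-contribution is of relative size \(s^{\gamma-\beta-1}\). Inductively differentiating this system should give that \(s^j\partial_s^j\) of each unknown stays bounded. This yields \(v-v_*=s^{\beta+1}\times\)(a function with \(s^j\partial^j\) bounded), and similarly \(\theta=s^\gamma\times\)(same type). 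From this, \(\omega\in C^{\lceil\beta\rceil-1,\beta-\lceil\beta\rceil+1}\) near \(L\), \(u\) gains two derivatives, and \(\theta\in C^{\lceil\gamma\rceil-1,\gamma-\lceil\gamma\rceil+1}\). The one-sided Taylor coefficients up to these orders vanish or match under reflection, by the parity about \(L\), so the glued functions inherit the same regularity.

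\emph{The case \(a=1\).} Here \(\beta=0\) and \(\gamma=2\), so the exponents are integers. Instead of bounding \(s^j\partial_s^j\), I would show that the solution is smooth in \(s\) up to \(s=0\), so that \(v\), \(g\) and \(m\) extend smoothly past \(L\). This uses that the indicial exponents are integers and carry no logarithmic resonance at this order. Then the reflected profile coincides with the unique smooth continuation, and the periodic extension is \(C^\infty(\mathbb R)\).
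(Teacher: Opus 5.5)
For $a\in(2/3,1)$ your argument follows the paper's route: equations on $(-L,L)$, reflection about $L$, junction regularity from Proposition~\ref{proposition: boundary_asymptotics}. It is in fact more complete than the paper's proof. The paper only cites the leading-order asymptotics of $m$, $v$, $v_x$, and these alone do not give the uniform derivative bounds needed for the stated H\"older classes. Your bootstrap does close. After your substitution the system has the form $s\,Y_s=F(Y,s,s^{\beta+1},s^{\gamma-\beta-1})$ with $F$ smooth near the bounded range of $Y$, so each $(s\partial_s)^jY$ is bounded by induction. Since all one-sided derivatives of order below $\beta$ (resp.\ $\gamma$) vanish at $L$, the reflection is harmless.

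The gap is the case $a=1$.

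\emph{The problem at $L$.} Here $\beta=0$, and Proposition~\ref{proposition: boundary_asymptotics}(4) gives only $v_x\to-c_v$, i.e.\ $\omega(L^-)=c_v/2$. Nothing in your Step~2 guarantees that the odd reflection of $\omega$ is continuous at $L$. Your closing claim that the reflected profile ``coincides with the unique smooth continuation'' is exactly the missing content, and it is not a local fact at $L$. Smoothness in $s$ up to $s=0$ says nothing about whether the Taylor series of $u$ at $L$ is odd in $s$.

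\emph{An explicit family showing this.} For $a=1$ one has $c_\theta=c_\omega=0$, and \eqref{case1a} gives $\theta=\tfrac12u^2$. The $v$-equation then shows that $\kappa:=(1-v^2-2u^2)/u$ is constant in $x$ and that $\omega=u+\kappa/4$. For every $\kappa$ the local solution is analytic in $s$ at $L$, since $u_s=\sqrt{1-\kappa u-2u^2}$. Yet $\omega(L^-)=\kappa/4$, so the reflected profile is smooth only when $\kappa=0$. That value is inherited from the origin ($\omega(0)=0$, equivalently $v-1=O(x^2)$), not from the structure at $L$.

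Your assertion of ``no logarithmic resonance'' is also unchecked. The exponents $\beta+1=1$ and $\gamma=2$ satisfy the resonance $\gamma=2(\beta+1)$; this is harmless only because $\theta$ is exactly $\tfrac12u^2$.

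\emph{How to close it.} The paper supplies the needed global input by exhibiting $u=\omega=\tfrac1{\sqrt2}\sin(\sqrt2x)$, $\theta=\tfrac14\sin^2(\sqrt2x)$, $L=\pi/\sqrt2$, and identifying this with the constructed profile through Proposition~\ref{proposition: uniqueness}. Equivalently, once $\kappa=0$ the system reduces to $u_{xx}=-2u$, whose solution is visibly smooth and $2L$-periodic with the required parities.
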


\begin{proof}
On \((-L,L)\), the definitions
\[
    u=xg,\qquad \theta=x^2m,\qquad \omega=-\frac12u_{xx}
\]
together with the system \eqref{eqt:gmu} for \((m,v,g)\) imply
\[
    a u\,\theta_x=(c_\theta+2u_x)\theta,
    \qquad
    2a u\,\omega_x=c_\theta\omega+2\theta_x,
    \qquad
    -u_{xx}=2\omega .
\]
Thus \((u,\theta,\omega)\) solves \eqref{eqt: case1} on
\((-L,L)\). Proposition~\ref{proposition: regularity_solution} gives
smoothness in the interior of each period, and
Proposition~\ref{proposition: boundary_asymptotics} gives the endpoint
behavior needed for the \(2L\)-periodic extension. It remains to discuss \(a=1\). In this case we can construct an explicit smooth periodic solution via \[ u(x)=\frac1{\sqrt2}\sin(\sqrt2 x), \qquad \omega(x)=\frac1{\sqrt2}\sin(\sqrt2 x), \qquad \theta(x)=\frac14\sin^2(\sqrt2 x), \]
where $ L=\pi/\sqrt{2}$, $c_l=c_\omega=c_\theta=0$. This satisfies the same normalization conditions
\[
    u_x(0)=1,\qquad \omega_x(0)=1,\qquad \theta_{xx}(0)=1.
\] In the next subsection, we prove that this is the unique periodic
profile satisfying the above normalization conditions. This completes the
proof.
\end{proof}

\subsection{Uniqueness of the solution}

The previous subsections established the existence, extension, and regularity of the profile. We now prove that the normalized profile is unique. The only delicate point is the origin, where the ODE system is singular. We first prove uniqueness in a small neighborhood of the origin by using the prescribed quadratic behavior of \(v-1\) and \(g-1\), and then extend the uniqueness to the whole maximal interval by standard ODE uniqueness away from the origin.

\begin{proposition}
\label{proposition: uniqueness}

Let $(m_1,v_1,g_1)$ and $(m_2,v_2,g_2)$ be two solutions of
\eqref{eqt:gmu} on $(-L,L)$, where
\[
v_i:=u_{i,x}.
\]
Assume that they satisfy the same initial data
\[
2m_i(0)=1,
\qquad
g_i(0)=1,
\]
and the regularity conditions
\[
v_i(x)-1=O(x^2),
\qquad
g_i(x)-1=O(x^2)
\qquad \text{as }x\to0.
\]
Then
\[
m_1\equiv m_2,
\qquad
v_1\equiv v_2,
\qquad
g_1\equiv g_2
\qquad \text{on }(-L,L).
\]

\end{proposition}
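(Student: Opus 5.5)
We will prove uniqueness in two stages: first on a small interval $[0,\delta]$ around the origin, using the integral (fixed-point) form of the equations, and then on the rest of $(-L,L)$ by ODE uniqueness. Throughout, the assumptions $v_i-1=O(x^2)$, $g_i-1=O(x^2)$ and $2m_i(0)=1$ do the work. They force each solution to satisfy the integrated representation \eqref{eqt:gmu_Iter} with zero integration constants. Indeed, the homogeneous solutions of the second line of \eqref{eqt:gmu_for_fixedpoint} behave like $x^{(3a-2+a)/(2a)}\cdot(\text{const})$, i.e.\ like $x^{2-1/a}$. This is $O(x^2)$ only if the constant vanishes, because $2-1/a<2$. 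Similarly, the first line forces $m=\tfrac12 g^{2/a}\phi^{2(1-a)/a}$ once $m(0)=\tfrac12$ is imposed. So $v_i=\calR_a(v_i)$ near $0$, where $\calR_a$ is applied on $[0,\delta]$ with the same formula. By evenness/oddness symmetry (or the same argument on $[-\delta,0]$), it suffices to treat $x\ge0$.

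Next, we set $w:=v_1-v_2$ and estimate in the weighted norm $\|w\|_\delta:=\sup_{0<x\le\delta}|w(x)|/x^2$, which is finite by hypothesis. The goal is a contraction estimate $\|\calR_a(v_1)-\calR_a(v_2)\|_\delta\le C\delta^{\kappa}\|w\|_\delta$ for some $\kappa>0$. The ingredients are as follows.
\begin{enumerate}
\item The difference of the averages obeys $|g_1-g_2|(x)\le \tfrac13\|w\|_\delta x^2$.
\item For the exponents, $\left|\int_0^x\bigl(\tfrac{g_1-1}{yg_1}-\tfrac{g_2-1}{yg_2}\bigr)dy\right|\le C\int_0^x y\|w\|_\delta\,dy\le C x^2\|w\|_\delta$, using that $g_i$ are close to $1$. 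Hence $|\phi_1-\phi_2|\le Cx^2\|w\|_\delta$.
\item Since $g_i,\phi_i\in[1/2,2]$ near $0$, the power maps are Lipschitz. Therefore the integrand of $\calR_a$ differs by $Cy^{1/a-1}y^2\|w\|_\delta$, and the prefactor differs by $Cx^{2-1/a}x^2\|w\|_\delta$.
\item Putting these together, $|\calR_a(v_1)-\calR_a(v_2)|(x)\le C x^{2-1/a}\bigl(x^{1/a}\cdot x^2+x^2\cdot x^{1/a}\bigr)\|w\|_\delta=Cx^4\|w\|_\delta$.
\end{enumerate}
Dividing by $x^2$ gives $\|w\|_\delta\le C\delta^2\|w\|_\delta$, so $w\equiv0$ on $[0,\delta]$ once $\delta$ is small. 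Then $g_1=g_2$ via $g=\calG(v)$, and $m_1=m_2$ via the explicit formula for $m$.

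The delicate point, and the expected main obstacle, is justifying that an arbitrary solution of \eqref{eqt:gmu} with the stated behavior really satisfies the integral formulas with zero homogeneous parts. Concretely, we must integrate the singular linear equations for $m$ and for $v-1$ from $\epsilon$ to $x$ and let $\epsilon\to0$. For $v-1$, solving the linear first-order equation with integrating factor $x^{-(2-1/a)}g^{-1/2}\phi^{-(2-3a)/(2a)}$ gives $v-1=x^{2-1/a}(\dots)\bigl(C-\tfrac1a\int_0^x\cdots\bigr)$. The bound $v-1=O(x^2)$ kills $C$, because $2-1/a<2$ for $a\le1$. (For $a=1$ we have $2-1/a=1<2$, which is still fine.) For $m$, the exponential factor converges since $(g-1)/(yg)=O(y)$, and $m(0)=\tfrac12$ fixes the constant. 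Some care is needed with $g\phi$ positive near zero, which holds since $g\to1$.

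Finally, we extend uniqueness to all of $(-L,L)$. On any compact subinterval of $(0,L)$ both solutions satisfy the regular system \eqref{eqt:gmu_for_extension}, because $x>0$ and $g_i>0$ there. The solutions agree at $x=\delta$, so Proposition~\ref{proposition: local_existence} (Picard--Lindelöf uniqueness), together with a standard continuation argument on the set where they coincide, gives agreement on $[\delta,L)$. The negative side follows identically.
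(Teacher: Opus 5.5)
Your argument is correct. Its overall structure matches the paper's: uniqueness on a small interval at the singular point, then Picard--Lindel\"of uniqueness for the regular system \eqref{eqt:gmu_for_extension} away from it. The difference is in how the local step is closed.

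The paper works directly with the difference system for $(\delta m,\delta v,\delta g)$ and keeps $m$ as an independent unknown. It sets $E(x)=\sup_{0<s\le x}\bigl(|\delta m(s)|+|\delta v(s)|/s^2\bigr)$, derives $E(x)\le C\int_0^x E(y)\,dy$, and concludes by Gronwall.

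You proceed in two steps. First you show that any solution with the stated behaviour satisfies $v_i=\calR_a(v_i)$ near $0$. This slaves $m_i$ to $g_i$ through the explicit formula fixed by $m_i(0)=\tfrac12$, and discards the homogeneous mode $x^{2-1/a}g^{1/2}\phi^{(2-3a)/(2a)}$ because it is not $O(x^2)$. Then you show that $\calR_a$ contracts in the weighted norm $\sup_{0<x\le\delta}|w|/x^2$ with constant $C\delta^2$, so smallness of the interval replaces Gronwall.

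Your route makes explicit where the hypothesis $v_i-1=O(x^2)$ is used: it is exactly what excludes the singular homogeneous branch of the linear equation for $v-1$. The paper compresses that step into ``one obtains.'' Your route also reuses the integral formulas already established for existence.

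The paper's route needs only local bounds on the coefficients of the difference equations, not explicit integrating factors. That is why it can be presented as adapting with little change to Proposition~\ref{proposition: uniqueness2}.

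As in the paper, your continuation step tacitly uses $g_i>0$ on $(-L,L)$, which is implicit in working on the maximal interval.
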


\begin{proof}

We first prove uniqueness in a small neighborhood of the origin.
Choose $\eps>0$ sufficiently small such that for $i=1,2$,
\[
g_i(x)\geq c_0>0,
\qquad
|m_i(x)|\leq C,
\qquad
|v_i(x)-1|+|g_i(x)-1|\leq Cx^2
\qquad \text{for } |x|\leq \eps .
\]
Define the differences
\[
\delta m:=m_1-m_2,
\qquad
\delta v:=v_1-v_2,
\qquad
\delta g:=g_1-g_2 .
\]

On $(0,\eps]$, the equations \eqref{eqt:gmu} become
\[
axg_i\,m_{i,x}
=
2\bigl(v_i-1-a(g_i-1)\bigr)m_i,
\]
\[
2axg_i\,v_{i,x}
=
(av_i+3a-2)(v_i-1)-4x^2m_i,
\]
\[
(xg_i)_x=v_i.
\]
Hence the difference \(\delta g\) satisfies
\[
x(\delta g)_x+\delta g=\delta v,
\]
and therefore,
\[
\delta g(x)=\frac1x\int_0^x \delta v(y)\idiff y .
\]
Introduce
\[
E(x)
:=
\sup_{0<s\leq x}
\left(
|\delta m(s)|
+
\frac{|\delta v(s)|}{s^2}
\right),
\]
with $E(0)=0$. Using the above equations together with the bounds near $x=0$,
one obtains
\[
|\delta g(x)|
\leq
Cx^2E(x),\qquad  
|\delta m(x)|
\leq
C\int_0^x E(y)\idiff y,\qquad
\frac{|\delta v(x)|}{x^2}
\leq
C\int_0^x E(y)\idiff y .
\]
Consequently
\[
E(x)
\leq
C\int_0^x E(y)\idiff y .
\]
By Gronwall's inequality we conclude that
\[
E(x)\equiv0
\qquad \text{on }[0,\eps].
\]
Hence
\[
m_1\equiv m_2,
\qquad
v_1\equiv v_2,
\qquad
g_1\equiv g_2
\qquad \text{on }[0,\eps].
\]
The same argument applied on $[-\eps,0]$ yields coincidence on $[-\eps,\eps]$.

It remains to extend uniqueness to the whole interval. Fix any \(L_0\in(\eps,L)\). On \([\eps,L_0]\), we have \(x\geq\eps>0\) and \(g_i\) is bounded away from zero. Therefore \eqref{eqt:gmu_for_extension} is a regular first-order ODE system whose right-hand side is locally Lipschitz in \((m,v,g)\). Since the two solutions coincide at \(x=\eps\), the standard uniqueness theorem for ODEs implies that they coincide on \([\eps,L_0]\). Letting \(L_0\uparrow L\), we obtain coincidence on \([\eps,L)\). The same argument on \((-L,-\eps]\) shows that the solutions agree on the entire interval \((-L,L)\).
\end{proof}

This completes the construction and analysis of the periodic profiles with \(c_l=0\). In the next section we turn to the whole-space setting, where the Neumann condition at infinity leads to a different family of profiles and allows \(c_l\) to have either sign.

\section{Self-Similar Profiles with 
\texorpdfstring{$u_x(\infty)=0$}{u_x(infty)=0}}

We next construct whole-space solutions to the profile system \eqref{eqt:main} subject to the Neumann condition \(u_x(\infty)=0\). Unlike the periodic case, the self-similar scaling parameter \(c_l\) is not fixed to be zero; as \(a\) varies in \(0<a\leq1\), the corresponding profiles cover the regimes \(c_l>0\), \(c_l=0\), and \(c_l<0\). The construction follows the same general strategy as in the periodic case: we first obtain a local profile by a fixed-point argument near the origin, and then use the associated ODE system to extend the solution and analyze its far-field or endpoint behavior.

\subsection{Reduced and normalized formulation}
We begin by deriving the normalized profile equations in the whole-space setting. The Neumann condition at infinity imposes an additional compatibility relation among the self-similar scaling parameters, which distinguishes this case from the periodic construction.
Integrating the second equation of \eqref{eqt:main} yields
\[
2(c_l x+au)\,u_{xx} = (a u_x+a u_x(0)+c_{\theta})\,(u_x-u_x(0)) - 4\theta.
\]
Letting $x\to\infty$ and using the far–field decay of $\theta$ and $u_x$ gives
\[
a\,u_x(0)+c_{\theta}=0.
\]
By scaling and nondegeneracy, we fix $u_x(0)=1$ and $\omega_x(0)=1$, which imply
\[
c_\theta=-a,\qquad
c_l=\tfrac{1}{2}c_\theta-(a-1)u_x(0)=1-\tfrac{3}{2}a,\qquad
c_\omega=\tfrac{1}{2}c_\theta-c_l=a-1,
\]
\[
\theta_{xx}(0)=(c_l-c_\omega+a\,u_x(0))\,\omega_x(0)=2-\tfrac{3}{2}a.
\]
Consequently, the system simplifies to
\begin{equation}\label{eqt: case2}
\begin{aligned}
(c_l x+au)\,\theta_x &= (c_{\theta}+2u_x)\,\theta,\\
2(c_l x+au)\,u_{xx} &= a u_x\left(u_x-1\right) - 4\theta,
\end{aligned}
\end{equation}
where $c_l=1-\tfrac{3}{2}a$ and $c_\theta=-a$. For \(a>0\), define $g:=c_l/a+u/x$ and $m:=\theta/x^2$. Then $g$, $m$, and $u_x$ satisfy
\begin{equation}\label{eqt:gmu2}
\begin{aligned}
axg\,m_x &= \big(2xg_x+2(1-a)(g-g(0))\big)\,m,\\
2axg\,(u_x)_x &= a u_x\!\left(u_x-1\right) - 4x^2 m,\\
(xg)_x &= u_x+\tfrac{1}{a}-\tfrac{3}{2},
\end{aligned}
\end{equation}
with $2m(0)=2-\tfrac{3}{2}a$ and $g(0)=\tfrac{1}{a}-\tfrac{1}{2}>0$.

Although the main fixed-point construction below is carried out for \(0<a\leq1\), it is useful to record the limiting case \(a=0\), where the normalized system degenerates to an explicitly solvable two-dimensional system. In the limiting case \(a=0\), the above definition of \(g\) is singular and
should be avoided. Instead, from \(c_l=1\) and \(c_\theta=0\),
the \(a=0\) formulation is the closed two-dimensional system
\begin{equation}\label{eqt:gmu2_a0}
\begin{aligned}
x m_x &= 2(u_x-1)m,\\
(u_x)_x &= -2xm,
\end{aligned}
\end{equation}
with $m(0)=1,\,u_x(0)=1.$
This system can be solved explicitly. One checks directly that
\[
    m(x)=\frac{1}{\left(1+x^2/2\right)^2},
    \qquad
    u_x(x)=\frac{1-x^2/2}{1+x^2/2}.
\]
satisfy \eqref{eqt:gmu2_a0} and the initial conditions \(m(0)=1\),
\(u_x(0)=1\). 
Thus, with the normalization \(u(0)=0\),
\[
    u(x)
    =
    -x+2\sqrt{2}\arctan\left(\frac{x}{\sqrt{2}}\right).
\]

\subsection{Fixed-point argument} 
We now construct a local solution near the origin via fixed-point argument. 
The equation \eqref{eqt:gmu2} is equivalent to
\begin{equation}\label{eqt:gmu2_for_fixedpoint}
\begin{aligned}
axg\,m_x &= \big(2xg_x+2(1-a)(g-g(0))\big)\,m,\\
2axg\,(u_x-1)_x &= a\Big((xg)_x-\tfrac{1}{a}+\tfrac{3}{2}\Big)(u_x-1)-4x^2 m,\\
(xg)_x &= u_x+\tfrac{1}{a}-\tfrac{3}{2},
\end{aligned}
\end{equation}
with $2m(0)=2-\tfrac{3}{2}a$ and $g(0)=\tfrac{1}{a}-\tfrac{1}{2}$.
From the first line of \eqref{eqt:gmu2_for_fixedpoint} we obtain
\begin{equation}\label{eq:m_representation2}
m(x)
=
\frac{4-3a}{4}
\left(\frac{g(x)}{g(0)}\right)^{\frac{2}{a}}
\exp\!\left(
\frac{2(1-a)}{a}
\int_0^x
\frac{g(y)-g(0)}{y\,g(y)}
\,\mathrm{d}y
\right).
\end{equation}
From the second line of \eqref{eqt:gmu2_for_fixedpoint}, viewing it as a linear equation for $u_x-1$ with $g,m$ given,
we obtain
\[
\begin{aligned}
    u'(x)
&= 1 - \frac{4-3a}{2-a}\,x^{\frac{a}{2-a}}\,\bigg(\frac{g(x)}{g(0)}\bigg)^{\frac{1}{2}}
\exp\!\left(\frac{2-3a}{4-2a}\int_0^x\frac{g(y)-g(0)}{y\,g(y)}\,\mathrm{d}y\right) \\
&\quad \cdot \int_0^x y^{\frac{2-2a}{2-a}}\,\bigg(\frac{g(y)}{g(0)}\bigg)^{\frac{4-3a}{2a}} 
\exp\!\left(\frac{7a^2-14a+8}{2a(2-a)}\int_0^y\frac{g(z)-g(0)}{z\,g(z)}\,\mathrm{d}z\right)\mathrm{d}y .
\end{aligned}
\]
Finally, integrating the third line yields
\[
g(x)=\frac{1}{x}\int_0^x u'(y)\,\mathrm{d}y+\Big(\frac{1}{a}-\frac{3}{2}\Big).
\]
 Denote
\[
\phi(x):=\exp\!\left(\int_0^x\frac{g(y)-g(0)}{y\,g(y)}\,\mathrm{d}y\right).
\]
Then $g$, $\phi$, and $u'$ satisfy
\begin{equation}\label{eqt:Iter2}
\begin{aligned}
\phi(x)
&= \exp\!\left(\int_0^x\frac{g(y)-g(0)}{y\,g(y)}\,\mathrm{d}y\right),\\
u'(x)
&= 1 - \frac{4-3a}{2-a}\,x^{\frac{a}{2-a}}
\left(\frac{g(x)}{g(0)}\right)^{\frac12}
\phi(x)^{\frac{2-3a}{4-2a}}
\int_0^x
y^{\frac{2-2a}{2-a}}
\left(\frac{g(y)}{g(0)}\right)^{\frac{4-3a}{2a}}
\phi(y)^{\frac{7a^2-14a+8}{2a(2-a)}}
\,\mathrm{d}y,\\
\frac{g(x)}{g(0)}
&=
\frac{2a}{2-a}\,
\frac{1}{x}\int_0^x u'(y)\idiff y
+
\frac{2-3a}{2-a}=:\widetilde{g}(x).
\end{aligned}
\end{equation}
This representation leads naturally to a fixed–point approach for constructing self–similar profiles. As before, the singularity at \(x=0\) is handled by rewriting the system in integral form and using \(v=u_x\) as the fixed-point variable. Once \(v\) is obtained, the functions \(g\), \(\phi\), and \(m\) are recovered from the integral above.

Let
\[
\mathbb{V}_2 := \big\{v\in C([-d,d]) : v(x)=v(-x)\big\}.
\]
Consider the closed, convex subset
\begin{equation*}
\begin{split}
\mathbb{D}_2 := \Big\{v\in \mathbb{V}_2:\ \quad &0\leq1-\mu x^2\leq v(x)\leq1\ \text{ on } [-d,d] ,\\\quad
 &v \text{ is nonincreasing on }[0,d] ,\quad
 |v(x)-v(y)|\leq K|x-y| \text{ for all } x,y\in[-d,d]
 \Big\},
\end{split}
\end{equation*}
where $d$, $\mu$ and $K$ are chosen as follows.
If $0<a\leq \frac23$, we set
\[
\mu:=1,
\qquad
d:=1.
\]
If $2/3<a\leq 1$, we set
\[
\mu:=\left(\frac{6-5a}{3(2-a)}\right)^{-\frac{3a-2}{4(2-a)}},
\qquad
d:=\mu^{-1/2}.
\]
In both cases, we have
\[
\mu d^2=1.
\]
Finally, we define
\[
K:=
\left(
\frac{3a\mu}{6-5a}
+
\frac{4-3a}{2-a}
\right)d.
\]
As in the periodic case, the function space for the fixed-point argument is chosen to encode the expected sign, monotonicity, and Lipschitz bounds of \(v=u_x\) near the origin. The parameters are selected differently on the two sides of \(a=2/3\), reflecting the change in the sign of \(c_l\).

Define the operators
\[
\frG(v)(x):=\frac{2a}{2-a}\,
\frac{1}{x}\int_0^x v(y)\idiff y
+
\frac{2-3a}{2-a}.
\]
\[
\Psi(v)(x):=\exp\!\left(\int_0^x\frac{\frG(v)(y)-1}{y\,\frG(v)(y)}\,\mathrm{d}y\right),
\]
\[
\frR_a(v)(x):=1 - \frac{4-3a}{2-a}\,x^{\frac{a}{2-a}}
\frG(v)(x)^{\frac12}
\Psi(v)(x)^{\frac{2-3a}{4-2a}}
\int_0^x
y^{\frac{2-2a}{2-a}}
\frG(v)(y)^{\frac{4-3a}{2a}}
\Psi(v)(y)^{\frac{7a^2-14a+8}{2a(2-a)}}
\,\mathrm{d}y.
\]


The rest of this subsection is devoted to proving that \(\frR_a\) has a fixed point in \(\bbD_2\) for every \(0<a\leq1\). The estimates below show that \(\frR_a\) preserves the admissible set, is continuous in the \(L^\infty\)-topology, and acts on a compact convex set.

\begin{lemma}\label{lemma: g_properties2}
For any $v\in \bbD_2$,
\[
1-\frac{2a}{3(2-a)}\,\mu x^2
\leq
\frG(v)(x)
\leq
1
\qquad \text{for all } x\in[0,d].
\]
Moreover, $\frG(v)$ is nonincreasing on $[0,d]$.
\end{lemma}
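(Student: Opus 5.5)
The plan mirrors the proof of Lemma~\ref{lemma: g_properties}: write \(\frG(v)\) as an affine image of the running average \(\calG(v)(x)=\frac1x\int_0^x v(y)\idiff y\), namely
\[
\frG(v)(x)=\frac{2a}{2-a}\,\calG(v)(x)+\frac{2-3a}{2-a}.
\]
Since \(0<a\leq1\), the coefficient \(\frac{2a}{2-a}\) is positive and the two coefficients sum to \(1\). Hence bounds and monotonicity of \(\calG(v)\) transfer directly to \(\frG(v)\).

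First, for \(v\in\bbD_2\) we have \(1-\mu y^2\leq v(y)\leq 1\) on \([0,d]\). Averaging over \([0,x]\) gives
\[
1-\frac{\mu x^2}{3}\leq \calG(v)(x)\leq 1.
\]
Applying the affine map, the upper bound becomes \(\frac{2a}{2-a}+\frac{2-3a}{2-a}=1\). The lower bound becomes
\[
\frac{2a}{2-a}\Bigl(1-\frac{\mu x^2}{3}\Bigr)+\frac{2-3a}{2-a}=1-\frac{2a}{3(2-a)}\mu x^2,
\]
which is exactly the claimed estimate.

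Second, for monotonicity we compute
\[
\calG(v)'(x)=\frac{1}{x^2}\int_0^x\bigl(v(x)-v(y)\bigr)\idiff y\leq 0,
\]
using that \(v\) is nonincreasing on \([0,d]\). To avoid differentiating a merely Lipschitz \(v\), one can instead note that \(\calG(v)(x)=\int_0^1 v(tx)\,dt\) is nonincreasing in \(x\) because each \(v(tx)\) is. Multiplying by the positive factor \(\frac{2a}{2-a}\) preserves this, so \(\frG(v)\) is nonincreasing on \([0,d]\).

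There is no real obstacle here. The only point to check is the sign of \(\frac{2a}{2-a}\), which is positive for \(a\in(0,1]\); the constant \(\frac{2-3a}{2-a}\) may be negative for \(a>2/3\), but it plays no role in the inequalities. One may also remark that, since \(\mu d^2=1\), the lower bound stays at least \(1-\frac{2a}{3(2-a)}\geq\frac13>0\), so \(\frG(v)\) is positive. This positivity will be needed for the powers appearing in \(\frR_a\).
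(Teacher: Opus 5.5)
Your proposal is correct and follows the paper's proof essentially line for line. The paper also averages $1-\mu y^2\le v(y)\le 1$ over $[0,x]$, substitutes into the affine formula for $\frG(v)$, and obtains monotonicity from $\bigl(\frac1x\int_0^x v\bigr)'=\frac1{x^2}\int_0^x (v(x)-v(y))\,\mathrm{d}y\le 0$. Your $\int_0^1 v(tx)\,\mathrm{d}t$ variant and the positivity remark are harmless extras; differentiating is legitimate anyway, since $v$ is continuous.
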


\begin{proof}
Let $\widetilde g:=\frG(v)$. By definition,
\[
\widetilde g(x)
=
\frac{2a}{2-a}\,
\frac{1}{x}\int_0^x v(y)\idiff y
+
\frac{2-3a}{2-a}.
\]

Since $v\in\bbD_2$, we have
\[
1-\mu y^2 \leq v(y) \leq 1
\qquad \text{for } y\in[0,d].
\]
Integrating over $[0,x]$ gives
\[
x-\frac{\mu x^3}{3}
\leq
\int_0^x v(y)\idiff y
\leq
x.
\]
Dividing by $x$ yields
\[
1-\frac{\mu x^2}{3}
\leq
\frac{1}{x}\int_0^x v(y)\idiff y
\leq
1.
\]

Substituting into the definition of $\widetilde g$ gives
\[
\widetilde g(x)
\leq
\frac{2a}{2-a}
+
\frac{2-3a}{2-a}
=
1,
\]
and
\[
\widetilde g(x)
\geq
\frac{2a}{2-a}\left(1-\frac{\mu x^2}{3}\right)
+
\frac{2-3a}{2-a}
=
1-\frac{2a}{3(2-a)}\,\mu x^2.
\]

Next we show that $\widetilde g$ is nonincreasing. Differentiating gives
\[
\widetilde g'(x)
=
\frac{2a}{2-a}
\left(\frac{1}{x}\int_0^x v(y)\idiff y\right)'.
\]
Using the identity
\[
\left(\frac{1}{x}\int_0^x v(y)\idiff y\right)'
=
\frac{1}{x^2}\int_0^x \big(v(x)-v(y)\big)\idiff y,
\]
and the fact that $v$ is nonincreasing on $[0,d]$, we obtain
\[
\widetilde g'(x)
=
\frac{2a}{2-a}\cdot
\frac{1}{x^2}
\int_0^x (v(x)-v(y))\idiff y
\leq 0.
\]
Therefore $\widetilde g$ is nonincreasing on $[0,d]$.
\end{proof}

\begin{lemma}\label{lemma: phi_properties2}
For any $v\in \bbD_2$,
\[
\sqrt{1-\frac{2a}{3(2-a)}\,\mu x^2}
\leq
\Psi(v)(x)
\leq
1
\qquad \text{for all } x\in[0,d].
\]
Moreover, $\Psi(v)$ is nonincreasing on $[0,d]$.
\end{lemma}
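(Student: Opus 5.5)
The argument mirrors the proof of Lemma~\ref{lemma: phi_properties}, with $\frG(v)$ in place of $\calG(v)$. Set $\widetilde g=\frG(v)$ and $\psi=\Psi(v)$, and write
\[
\psi(x)=\exp\!\left(\int_0^x \frac{1}{y}\Big(1-\frac{1}{\widetilde g(y)}\Big)\idiff y\right).
\]
We take from Lemma~\ref{lemma: g_properties2} the two-sided bound
\[
1-\kappa\mu y^2\leq \widetilde g(y)\leq 1,\qquad \kappa:=\frac{2a}{3(2-a)},
\]
valid on $[0,d]$. Before using it, we check that $\widetilde g$ stays positive, so that $\psi$ is well defined. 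Since $\mu d^2=1$ and $\kappa\le 2/3$ for $a\le 1$, we get $1-\kappa\mu y^2\geq 1/3>0$. In addition, the integrand is $O(y)$ near $0$, so the integral converges.

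\textbf{Upper bound.} Because $\widetilde g\le 1$, we have $1-1/\widetilde g\le 0$. The exponent is therefore nonpositive, which gives $\psi\le 1$.

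\textbf{Lower bound.} Using $\widetilde g(y)\ge 1-\kappa\mu y^2$, we get
\[
1-\frac{1}{\widetilde g(y)}\ \geq\ 1-\frac{1}{1-\kappa\mu y^2}=-\frac{\kappa\mu y^2}{1-\kappa\mu y^2}.
\]
Dividing by $y$ and integrating from $0$ to $x$ yields
\[
\int_0^x -\frac{\kappa\mu y}{1-\kappa\mu y^2}\idiff y=\tfrac12\log\big(1-\kappa\mu x^2\big).
\]
Exponentiating gives $\psi(x)\ge\sqrt{1-\kappa\mu x^2}$, which is exactly the claimed bound.

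\textbf{Monotonicity.} Differentiating,
\[
\psi'(x)=\psi(x)\,\frac{1}{x}\Big(1-\frac{1}{\widetilde g(x)}\Big).
\]
Here $\psi>0$ and $\widetilde g\le1$, so $\psi'\le 0$ on $(0,d]$, and $\psi$ is nonincreasing there.

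No step is a real obstacle. The only point needing attention is the positivity of $\widetilde g$ on all of $[0,d]$, which the normalization $\mu d^2=1$ guarantees in both parameter regimes.
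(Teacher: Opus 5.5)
Your proposal is correct and follows the paper's proof essentially step for step: the upper bound comes from $\widetilde g\le 1$, the lower bound from integrating $-\kappa\mu y/(1-\kappa\mu y^2)$ to get $\tfrac12\log(1-\kappa\mu x^2)$, and monotonicity from the sign of $\psi'$. Your explicit check that $1-\kappa\mu y^2\ge 1/3$ (using $\mu d^2=1$ and $\kappa\le 2/3$) is a small improvement. The paper relies on this positivity of $\widetilde g$ in the lower-bound step without stating it.
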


\begin{proof}
Let $\widetilde g:=\frG(v)$ and $\phi:=\Psi(v)$. By Lemma~\ref{lemma: g_properties2},
\[
1-\frac{2a}{3(2-a)}\,\mu x^2
\leq
\widetilde g(x)
\leq
1.
\]

From the definition
\[
\phi(x)
=
\exp\!\left(
\int_0^x
\frac{\widetilde g(y)-1}{y\,\widetilde g(y)}\idiff y
\right),
\]
we immediately obtain $\phi(x)\leq 1$ since $\widetilde g(y)\leq 1$.

Next we estimate the exponent from below. Since
\[
\widetilde g(y)\geq 1-\frac{2a}{3(2-a)}\,\mu y^2,
\]
we have
\[
\frac{\widetilde g(y)-1}{y\,\widetilde g(y)}
\geq
-\frac{\frac{2a}{3(2-a)}\,\mu y^2}{y\left(1-\frac{2a}{3(2-a)}\,\mu y^2\right)}
=
-\frac{\frac{2a}{3(2-a)}\,\mu\,y}{1-\frac{2a}{3(2-a)}\,\mu y^2}.
\]
Integrating yields
\[
\int_0^x
\frac{\widetilde g(y)-1}{y\,\widetilde g(y)}\idiff y
\geq
\frac12
\log\!\left(
1-\frac{2a}{3(2-a)}\,\mu x^2
\right).
\]
Therefore
\[
\phi(x)
\geq
\sqrt{
1-\frac{2a}{3(2-a)}\,\mu x^2
}.
\]

Finally,
\[
\phi'(x)
=
\phi(x)\frac{\widetilde g(x)-1}{x\,\widetilde g(x)}
\leq
0,
\]
since $\widetilde g(x)\leq 1$. Hence $\phi$ is nonincreasing on $[0,d]$.
\end{proof}

\begin{lemma}\label{lemma: r_properties2}
Let $a\in(0,1]$ and $v\in\bbD_2$. Then for all $x\in[0,d]$,
\[
1-\mu x^2
\leq
\frR_a(v)(x)
\leq
1-x^2\left(1-\frac{2a}{3(2-a)}\,\mu x^2\right)^{\frac{3-2a}{a}}
\leq
1 .
\]
Moreover, $\frR_a(v)$ is nonincreasing on $[0,d]$.
\end{lemma}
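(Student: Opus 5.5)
The plan is to follow the proof of Lemma~\ref{lemma: r_properties} essentially line by line. Write $\widetilde g=\frG(v)$, $\phi=\Psi(v)$, $r=\frR_a(v)$ and
\[
s(x):=1-\frac{2a}{3(2-a)}\mu x^2 .
\]
Lemmas~\ref{lemma: g_properties2}--\ref{lemma: phi_properties2} give, on $[0,d]$,
\[
s(x)\leq\widetilde g(x)\leq1,\qquad \sqrt{s(x)}\leq\phi(x)\leq1,
\]
and both $\widetilde g$ and $\phi$ are nonincreasing. Since $\mu d^2=1$, we have $s\geq s(d)=\frac{6-5a}{3(2-a)}>0$ on $[0,d]$, so all the powers below are well defined. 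The exponent bookkeeping consists of four identities, each a direct computation:
\[
\frac{a}{2-a}+\frac{4-3a}{2-a}=2,\qquad
\frac12+\frac{4-3a}{2a}=\frac2a-1,\qquad
\frac{2-3a}{4-2a}+\frac{7a^2-14a+8}{2a(2-a)}=\frac{2(1-a)}{a},\qquad
\Bigl(\frac2a-1\Bigr)+\frac{1-a}{a}=\frac{3-2a}{a}.
\]

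\emph{Upper bound.} The exponents $\frac{4-3a}{2a}$ and $\frac{7a^2-14a+8}{2a(2-a)}$ are both positive for $a\in(0,1]$; the second because $7a^2-14a+8$ has negative discriminant. So for $0\leq y\leq x$ we may replace $\widetilde g(y)$, $\phi(y)$ inside the integral by $\widetilde g(x)$, $\phi(x)$, which only decreases it. Using $\int_0^x y^{\frac{2-2a}{2-a}}\idiff y=\frac{2-a}{4-3a}x^{\frac{4-3a}{2-a}}$ and the identities above, this gives
\[
r(x)\leq 1-x^2\,\widetilde g(x)^{\frac2a-1}\phi(x)^{\frac{2(1-a)}{a}} .
\]
Inserting $\widetilde g\geq s$ and $\phi\geq s^{1/2}$ yields the stated bound $1-x^2 s^{\frac{3-2a}{a}}$.

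\emph{Lower bound.} Bound $\widetilde g^{1/2}$, $\widetilde g(y)^{\frac{4-3a}{2a}}$ and $\phi(y)^{\frac{7a^2-14a+8}{2a(2-a)}}$ above by $1$. The integral then evaluates exactly, and $r(x)\geq1-x^2\phi(x)^{\frac{2-3a}{4-2a}}$. This is the only place where the two regimes of $a$ separate.
\begin{itemize}
\item If $0<a\leq2/3$, the exponent $\frac{2-3a}{4-2a}$ is nonnegative, so the last factor is at most $1$ and $r\geq1-x^2=1-\mu x^2$.
\item If $2/3<a\leq1$, the exponent is negative. Monotonicity of $\phi$ and Lemma~\ref{lemma: phi_properties2} then give
\[
\phi(x)^{\frac{2-3a}{4-2a}}\leq\phi(d)^{\frac{2-3a}{4-2a}}\leq s(d)^{\frac{2-3a}{4(2-a)}}=\Bigl(\frac{6-5a}{3(2-a)}\Bigr)^{-\frac{3a-2}{4(2-a)}}=\mu,
\]
which is precisely how $\mu$ was defined.
\end{itemize}

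\emph{Monotonicity.} This is the step I expect to need the most care, because $\widetilde g'\leq0$ enters with an unfavorable sign. Differentiate the logarithm of $1-r$:
\[
r'(x)=(r-1)\Bigl[\frac{a}{(2-a)x}+\frac{\widetilde g'}{2\widetilde g}+\frac{2-3a}{4-2a}\,\frac{\widetilde g-1}{x\widetilde g}\Bigr]-\frac{4-3a}{2-a}\,x\,\widetilde g^{\frac2a-1}\phi^{\frac{2(1-a)}{a}} .
\]
To handle the bracket, use the identity $x\widetilde g'=\frac{2a}{2-a}v+\frac{2-3a}{2-a}-\widetilde g$, which follows from the definition of $\frG$. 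Multiplying the bracket by $x$, the constant terms cancel, since
\[
\frac{a}{2-a}-\frac12+\frac{2-3a}{2(2-a)}=0 .
\]
The $1/\widetilde g$ terms also cancel, leaving exactly $\frac{a\,v}{(2-a)\widetilde g}$. Hence
\[
r'=\frac{a\,v}{(2-a)x\widetilde g}(r-1)-\frac{4-3a}{2-a}\,x\,\widetilde g^{\frac2a-1}\phi^{\frac{2(1-a)}{a}} .
\]
This is nonpositive because $v\geq0$, $\widetilde g>0$ and $r\leq1$ by the upper bound. This is the analogue of the formula for $r'$ in Lemma~\ref{lemma: r_properties}, and it is also what will feed the Lipschitz estimate with constant $K$ afterwards.
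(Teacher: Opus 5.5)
Your proposal is correct and follows the paper's proof step for step: the monotone replacement inside the integral gives the upper bound, the crude bounds $\widetilde g,\phi\le 1$ together with the definition of $\mu$ give the lower bound, and the sign of an explicit formula for $r'$ gives monotonicity. Your derivation of that formula via $x\widetilde g'=\frac{2a}{2-a}v+\frac{2-3a}{2-a}-\widetilde g$ fills in a computation the paper only asserts (its $\frac{v}{2xg}$ equals your $\frac{av}{(2-a)x\widetilde g}$, since $g=g(0)\widetilde g$), and your case split in the lower bound is cleaner than the paper's intermediate step $\phi(x)^{\frac{2-3a}{4-2a}}\le\phi(d)^{\frac{2-3a}{4-2a}}$, which is valid only when that exponent is negative.
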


\begin{proof}
Denote
\[
\widetilde g:=\frG(v),\qquad \phi:=\Psi(v),\qquad r:=\frR_a(v).
\]
By Lemmas~\ref{lemma: g_properties2} and \ref{lemma: phi_properties2},
\[
1-\frac{2a}{3(2-a)}\,\mu x^2
\leq \widetilde g(x)\leq 1,
\qquad
\sqrt{1-\frac{2a}{3(2-a)}\,\mu x^2}
\leq \phi(x)\leq 1,
\]
and both $\widetilde g$ and $\phi$ are nonincreasing on $[0,d]$.

\medskip
\noindent
\textit{Upper bound.}
Since $\widetilde g$ and $\phi$ are nonincreasing, for $0\leq y\leq x$ we have
\[
\widetilde g(y)\geq \widetilde g(x),\qquad \phi(y)\geq\phi(x).
\]
Moreover, for $a\in(0,1]$,
\[
\frac{4-3a}{2a}\geq0,
\qquad
\frac{7a^2-14a+8}{2a(2-a)}\geq0 .
\]
Hence
\[
\widetilde g(y)^{\frac{4-3a}{2a}}
\phi(y)^{\frac{7a^2-14a+8}{2a(2-a)}}
\geq
\widetilde g(x)^{\frac{4-3a}{2a}}
\phi(x)^{\frac{7a^2-14a+8}{2a(2-a)}} .
\]
Substituting into the definition of $r$ yields
\[
\begin{aligned}
r(x)
&\leq
1-\frac{4-3a}{2-a}
x^{\frac{a}{2-a}}
\widetilde g(x)^{\frac12}
\phi(x)^{\frac{2-3a}{4-2a}}
\widetilde g(x)^{\frac{4-3a}{2a}}
\phi(x)^{\frac{7a^2-14a+8}{2a(2-a)}}
\int_0^x y^{\frac{2-2a}{2-a}}\idiff y .
\end{aligned}
\]
Since
\[
\int_0^x y^{\frac{2-2a}{2-a}}\idiff y
=
\frac{2-a}{4-3a}x^{\frac{4-3a}{2-a}},
\]
we obtain
\[
r(x)
\leq
1-x^2 \widetilde g(x)^{\frac{2-a}{a}}\phi(x)^{\frac{2(1-a)}{a}} .
\]
Using the lower bounds for $\widetilde g$ and $\phi$ gives
\[
r(x)
\leq
1-x^2
\left(1-\frac{2a}{3(2-a)}\,\mu x^2\right)^{\frac{3-2a}{a}}
\leq 1 .
\]

\medskip
\noindent
\textit{Lower bound.}
Using $\widetilde g(x)\leq1$ and $\phi(x)\leq1$, we obtain
\[
\begin{aligned}
r(x)
&\geq
1-\frac{4-3a}{2-a}
x^{\frac{a}{2-a}}
\phi(d)^{\frac{2-3a}{4-2a}}
\int_0^x y^{\frac{2-2a}{2-a}}\idiff y  
=
1-x^2\phi(d)^{\frac{2-3a}{4-2a}} .
\end{aligned}
\]
By Lemma~\ref{lemma: phi_properties2}, we have
\[
\phi(d)\geq
\sqrt{1-\frac{2a}{3(2-a)}\,\mu d^2}.
\]
If $0<a\leq\frac23$, we choose $\mu=1$ and $d=1$, and obtain
\[
r(x)\geq 1-x^2 = 1-\mu x^2 .
\]
If $\frac23<a\leq1$, the parameters $\mu$ and $d$ are chosen so that
\[
\left(1-\frac{2a}{3(2-a)}\,\mu d^2\right)^{\frac{2-3a}{8-4a}}
\leq \mu,
\]
which yields
\[
r(x)\geq 1-\mu x^2 .
\]

\medskip
\noindent
\textit{Monotonicity.}
Differentiating the equation satisfied by $r$ gives
\[
r'(x)
=
\frac{v(x)}{2xg(x)}(r(x)-1)
-\frac{4-3a}{2-a}x \widetilde g(x)^{\frac{2-a}{a}}
\phi(x)^{\frac{2(1-a)}{a}} .
\]
Since $v(x)\geq0$, $\widetilde g(x)>0$, and $r(x)\leq1$, we conclude that
\[
r'(x)\leq0 .
\]
Hence $r$ is nonincreasing on $[0,d]$.
\end{proof}

\begin{lemma}\label{lemma: r_Lipschitz2}
Let $a\in(0,1]$ and $v\in\bbD_2$. Then $\frR_a(v)$ is Lipschitz on $[-d,d]$. In particular,
\[
|\frR_a(v)(x)-\frR_a(v)(y)|
\leq
K|x-y|
\qquad \text{for all } x,y\in[-d,d].
\]
\end{lemma}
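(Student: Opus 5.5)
We follow the proof of Lemma~\ref{lemma: r_Lipschitz}: bound $|r'|$ pointwise on $[0,d]$ by $K$, then extend to $[-d,d]$ by evenness. Write $\widetilde g=\frG(v)$, $\phi=\Psi(v)$ and $r=\frR_a(v)$. By the monotonicity part of Lemma~\ref{lemma: r_properties2}, $r$ is differentiable on $(0,d]$ with
\[
r'(x)=\frac{v(x)}{2xg(x)}(r(x)-1)-\frac{4-3a}{2-a}\,x\,\widetilde g(x)^{\frac{2-a}{a}}\phi(x)^{\frac{2(1-a)}{a}},
\]
where $g=g(0)\widetilde g=\frac{2-a}{2a}\widetilde g$. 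So the first coefficient is $\frac{a\,v(x)}{(2-a)x\widetilde g(x)}$. Both terms are nonpositive, hence
\[
|r'(x)|=\frac{a\,v(x)}{(2-a)x\widetilde g(x)}\bigl(1-r(x)\bigr)+\frac{4-3a}{2-a}\,x\,\widetilde g(x)^{\frac{2-a}{a}}\phi(x)^{\frac{2(1-a)}{a}}.
\]

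For the second term we use $\widetilde g,\phi\leq 1$ (Lemmas~\ref{lemma: g_properties2}--\ref{lemma: phi_properties2}). This bounds it by $\frac{4-3a}{2-a}x\leq\frac{4-3a}{2-a}d$.

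For the first term we use three bounds:
\begin{itemize}
\item $v\leq 1$;
\item $1-r(x)\leq\mu x^2$, from Lemma~\ref{lemma: r_properties2};
\item $\widetilde g(x)\geq\widetilde g(d)\geq 1-\frac{2a}{3(2-a)}\mu d^2=1-\frac{2a}{3(2-a)}=\frac{6-5a}{3(2-a)}$, using $\mu d^2=1$ and the monotonicity of $\widetilde g$.
\end{itemize}
This gives
\[
\frac{a\,\mu x}{(2-a)}\cdot\frac{3(2-a)}{6-5a}=\frac{3a\mu}{6-5a}\,x\leq \frac{3a\mu}{6-5a}\,d.
\]
Note $6-5a>0$ for $a\leq1$. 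Adding the two bounds yields $|r'(x)|\leq K$ on $(0,d]$, which matches the definition of $K$ exactly.

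It remains to handle continuity at $0$ and symmetry. The representation shows $r$ is continuous on $[0,d]$ with $r(0)=1$, and $|1-r|\leq \mu x^2$ gives $r'(0)=0$. By the mean value theorem, $r$ is $K$-Lipschitz on $[0,d]$. Since $v$ is even, $\widetilde g$, $\phi$ and the integrand are even, so $r$ is even. For $x\leq0\leq y$ we have $|r(x)-r(y)|\leq |r(|x|)-r(0)|+|r(0)-r(y)|\leq K(|x|+y)=K|x-y|$, which gives the Lipschitz bound on $[-d,d]$.

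The main obstacle is just the bookkeeping in the derivative formula. One must check that the coefficient $\frac{v}{2xg}$ in Lemma~\ref{lemma: r_properties2} is expressed via $g=g(0)\widetilde g$, so that the factor $\frac{a}{2-a}$ appears. One must also verify that differentiating the product in $\frR_a$ reproduces that formula; this uses $\frac{d}{dx}\log\bigl(x^{\frac{a}{2-a}}\widetilde g^{1/2}\phi^{\frac{2-3a}{4-2a}}\bigr)$ together with the ODE relation $(x\widetilde g)'\propto v$.
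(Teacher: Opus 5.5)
Your proposal is correct and follows the same route as the paper. Like the paper, you differentiate $\frR_a(v)$, observe that both terms of $r'$ are nonpositive, bound them using $v\le 1$, $1-r\le\mu x^2$, $\widetilde g\ge\widetilde g(d)$ and $\widetilde g,\phi\le 1$, and then pass to $[-d,d]$ by evenness. Your explicit conversion $g=\frac{2-a}{2a}\widetilde g$, which yields $\widetilde g(d)\ge\frac{6-5a}{3(2-a)}$ and hence exactly the constant $\frac{3a\mu}{6-5a}$ in $K$, fills in the step the paper leaves implicit in its bound $\frac{\mu}{2g(d)}\,d$.
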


\begin{proof}
Denote
\[
\widetilde g:=\frG(v),\qquad
\phi:=\Psi(v),\qquad
r:=\frR_a(v).
\]
From Lemma~\ref{lemma: r_properties2},
\[
r'(x)
=
\frac{v(x)}{2xg(x)}(r(x)-1)
-\frac{4-3a}{2-a}x \widetilde g(x)^{\frac{2-a}{a}}
\phi(x)^{\frac{2(1-a)}{a}} .
\]
Since $r(x)\leq1$, we obtain
\[
|r'(x)|
=
\frac{v(x)}{2xg(x)}(1-r(x))
+
\frac{4-3a}{2-a}x \widetilde g(x)^{\frac{2-a}{a}}
\phi(x)^{\frac{2(1-a)}{a}} .
\]
Using
\[
v(x)\leq1,\qquad
1-r(x)\leq \mu x^2,\qquad
\widetilde g(x)\geq \widetilde g(d),
\]
and
\[
\widetilde g(x)\leq1,\qquad
\phi(x)\leq1,
\]
we obtain
\[
|r'(x)|
\leq
\frac{\mu}{2g(d)}x
+
\frac{4-3a}{2-a}x .
\]
Since $x\leq d$, this yields
\[
|r'(x)|
\leq
\left(\frac{\mu}{2g(d)}+\frac{4-3a}{2-a}\right)d \leq K.
\]
Hence $\frR_a(v)$ is Lipschitz on $[0,d]$.  
Since $\frR_a(v)$ is even, the same bound holds on $[-d,d]$.
\end{proof}

\begin{proposition}\label{proposition: r_selfmap2}
Let $a\in(0,1]$. The operator $\frR_a$ maps $\bbD_2$ into itself.
\end{proposition}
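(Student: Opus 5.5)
The plan is to mirror the proof of Proposition~\ref{proposition: r_selfmap} in the periodic case. Fix \(v\in\bbD_2\) and set \(r=\frR_a(v)\). Membership \(r\in\bbD_2\) requires four things: \(r\in\mathbb{V}_2\) (continuity and evenness), the pointwise bounds \(1-\mu x^2\leq r\leq 1\) on \([-d,d]\), monotonicity on \([0,d]\), and the Lipschitz bound with constant \(K\).

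The second and third items are exactly Lemma~\ref{lemma: r_properties2}. The fourth is Lemma~\ref{lemma: r_Lipschitz2}, both valid for every \(a\in(0,1]\). In each case we only need to note that the estimates are proved on \([0,d]\) and then transported to \([-d,d]\) by evenness.

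For evenness, I observe that if \(v\) is even, then \(\frac1x\int_0^x v\) is even, so \(\frG(v)\) is even. The integrand \((\frG(v)(y)-1)/(y\,\frG(v)(y))\) is then odd, so \(\Psi(v)\) is even. In the formula for \(\frR_a(v)\), the prefactor \(x^{a/(2-a)}\) combined with the outer integral of \(y^{(2-2a)/(2-a)}\times(\text{even})\) is even. Indeed, the product behaves like \(x^{2}\cdot B(x)\) with \(B(x)=\int_0^1 t^{(2-2a)/(2-a)}(\cdots)(tx)\,dt\), as in Step~1 of Proposition~\ref{proposition: regularity_solution}. We therefore interpret \(\frR_a(v)\) on \([-d,0)\) through this even extension.

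Continuity on \([-d,d]\), including at \(x=0\), follows from the same rescaled representation \(y=tx\). This gives \(r(x)=1-\frac{4-3a}{2-a}x^2\,\frG(v)(x)^{1/2}\Psi(v)(x)^{(2-3a)/(4-2a)}B(x)\) with \(B\) continuous, since the weight \(t^{(2-2a)/(2-a)}\) is integrable. Here \(\frG(v)\geq 1-\frac{2a}{3(2-a)}>0\) keeps all powers well defined.

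The only point needing care is the Lipschitz step, i.e.\ that the bound obtained in Lemma~\ref{lemma: r_Lipschitz2} is really at most \(K\). The coefficient there involves \(\frac{\mu}{2\widetilde g(d)}\), while \(K\) uses \(\frac{3a\mu}{6-5a}\). Since \(\mu d^2=1\), Lemma~\ref{lemma: g_properties2} gives
\[
\widetilde g(d)\geq 1-\frac{2a}{3(2-a)}=\frac{6-5a}{3(2-a)},
\]
so
\[
\frac{\mu}{2\widetilde g(d)}\leq \frac{3(2-a)\mu}{2(6-5a)}.
\]
I would also use that the first term in \(r'\) carries the factor \(a\) from the \(2axg\) denominator, with \(g=g(0)\widetilde g\) and \(g(0)=\frac1a-\frac12\). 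This should reduce the coefficient to \(\frac{3a\mu}{6-5a}\), and I would double-check this normalization constant against the definition of \(K\). Since \(x\in[0,d]\), the bound \(\sup|r'|\leq K\) then gives the Lipschitz estimate on \([0,d]\), and evenness extends it to \([-d,d]\). Combining the four items gives \(\frR_a(\bbD_2)\subset\bbD_2\).
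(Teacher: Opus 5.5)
Your proposal is correct and follows the paper's own argument: the paper's proof likewise combines Lemma~\ref{lemma: r_properties2} and Lemma~\ref{lemma: r_Lipschitz2} with the evenness of $\frR_a(v)$. Your additional checks are sound and supply details the paper leaves implicit:
\begin{itemize}
\item The substitution $y=tx$ gives $r(x)=1-\frac{4-3a}{2-a}x^2(\cdots)B(x)$, which makes evenness and continuity at $x=0$ clear.
\item The first term of $r'$ is $\frac{v}{2xg}(r-1)$ with $g=g(0)\widetilde g\geq \frac{2-a}{2a}\cdot\frac{6-5a}{3(2-a)}=\frac{6-5a}{6a}$. So its coefficient is at most $\frac{3a\mu}{6-5a}$, and the Lipschitz constant is indeed at most $K$.
\end{itemize}
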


\begin{proof}
Let $v\in\bbD_2$ and denote
\[
r:=\frR_a(v).
\]
By Lemma~\ref{lemma: r_properties2}, we have
\[
1-\mu x^2 \leq r(x)\leq 1
\qquad \text{for all } x\in[0,d],
\]
and $r$ is nonincreasing on $[0,d]$. Since $\frR_a(v)$ is even, the same bounds hold on $[-d,d]$.
Moreover, by Lemma~\ref{lemma: r_Lipschitz2}, $\frR_a(v)$ is Lipschitz on $[-d,d]$.
Therefore $r$ satisfies all the defining properties of $\bbD_2$, and hence
\[
\frR_a(v)\in\bbD_2 .
\]
\end{proof}

\begin{proposition}\label{proposition: r_continuity2}
Let $a\in(0,1]$. $\frR_a:\bbD_2\to\bbD_2$ is continuous with respect to the $L^\infty$ norm.
\end{proposition}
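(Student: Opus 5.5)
We follow the proof of Proposition~\ref{proposition: r_continuity} essentially verbatim, with $\calG,\Phi,\calR_a$ replaced by $\frG,\Psi,\frR_a$ and the interval $[0,\ell]$ replaced by $[0,d]$. Fix $v_0\in\bbD_2$ and let $v\in\bbD_2$ with $\|v-v_0\|_{L^\infty}\leq\delta$. Write $\widetilde g=\frG(v)$, $\phi=\Psi(v)$, $r=\frR_a(v)$, and define $\widetilde g_0,\phi_0,r_0$ in the same way for $v_0$.

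\textbf{Step 1: the map $v\mapsto\widetilde g$.} Directly from the definition of $\frG$,
\[
\|\widetilde g-\widetilde g_0\|_{L^\infty}\leq \tfrac{2a}{2-a}\,\delta .
\]
By Lemma~\ref{lemma: g_properties2} and $\mu d^2=1$, both $\widetilde g$ and $\widetilde g_0$ are bounded below by
\[
1-\tfrac{2a}{3(2-a)}\geq \tfrac13 \qquad\text{for } a\in(0,1],
\]
so they lie in the fixed interval $[1/3,1]$. This lower bound holds for every element of $\bbD_2$, so no smallness of $\delta$ is needed to keep the denominators away from zero.

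\textbf{Step 2: the map $v\mapsto\phi$.} This is the main (mild) obstacle, because the integrand of $\Psi$ carries the factor $1/y$. We split the integral at $y=\sqrt\delta$.
\begin{itemize}[label={}]
\item On $[0,\sqrt\delta]$, Lemma~\ref{lemma: g_properties2} gives $|\widetilde g-1|,|\widetilde g_0-1|\leq Cy^2$. Hence the difference of the integrands is at most $Cy$, and its integral is $O(\delta)$.
\item On $[\sqrt\delta,d]$, we use
\[
\frac{\widetilde g-1}{y\widetilde g}-\frac{\widetilde g_0-1}{y\widetilde g_0}=\frac{\widetilde g-\widetilde g_0}{y\,\widetilde g\,\widetilde g_0},
\]
which is at most $9\delta/y$. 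Its integral is $O(\delta|\log\delta|)$.
\end{itemize}
Together these show that the exponents differ by $o(1)$. Since both exponents are nonpositive, and $\exp$ is $1$-Lipschitz on $(-\infty,0]$, we get $\|\phi-\phi_0\|_{L^\infty}\to0$. Lemma~\ref{lemma: phi_properties2} also gives $\phi,\phi_0\geq\sqrt{1/3}$, uniformly over $\bbD_2$.

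\textbf{Step 3: the map $v\mapsto r$.} Write $r=1-\frac{4-3a}{2-a}AB$, where
\[
A=x^{\frac{a}{2-a}}\,\widetilde g^{1/2}\,\phi^{\frac{2-3a}{4-2a}},\qquad
B=\int_0^x y^{\frac{2-2a}{2-a}}\,\widetilde g^{\frac{4-3a}{2a}}\,\phi^{\frac{7a^2-14a+8}{2a(2-a)}}\,dy .
\]
The exponent of $\phi$ in $A$ may be negative, but $\widetilde g$ and $\phi$ range over compact subintervals of $(0,1]$. On such intervals every power map is Lipschitz, so
\[
\|A-A_0\|_{L^\infty}\lesssim \|\widetilde g-\widetilde g_0\|_{L^\infty}+\|\phi-\phi_0\|_{L^\infty}.
\]
The weight $y^{\frac{2-2a}{2-a}}$ has a nonnegative exponent and is therefore bounded on $[0,d]$, so $\|B-B_0\|_{L^\infty}$ obeys the same bound. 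Both $A$ and $B$ are bounded uniformly over $\bbD_2$, since $x^{a/(2-a)}\leq\max(1,d)$ and the integrands are bounded. The product estimate from Proposition~\ref{proposition: r_continuity} then gives $\|r-r_0\|_{L^\infty}\to0$ as $\delta\to0$ on $[0,d]$. Because $r$ and $r_0$ are even, the same holds on $[-d,d]$.

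Combined with Proposition~\ref{proposition: r_selfmap2}, this shows that $\frR_a:\bbD_2\to\bbD_2$ is continuous in the $L^\infty$ norm.
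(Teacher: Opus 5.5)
Your proposal is correct and follows the paper's proof essentially step for step. It bounds $\|\widetilde g-\widetilde g_0\|_{L^\infty}$ directly from the definition of the averaging operator, splits the exponent of $\Psi$ at $y=\sqrt\delta$ to get an $O(\delta+\delta|\log\delta|)$ bound, and then applies the product estimate to $r=1-\frac{4-3a}{2-a}AB$ using Lipschitz power maps on compact subintervals of $(0,1]$. Your uniform bound $\widetilde g\geq 1-\frac{2a}{3(2-a)}\geq\frac13$ on all of $\mathbb{D}_2$ is slightly more accurate than the paper's claim that $\widetilde g,\widetilde g_0\in[\frac12,1]$ for small $\delta$: Lemma~\ref{lemma: g_properties2} does not guarantee this when $a>6/7$, although any positive lower bound is enough for the argument.
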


\begin{proof}
Fix $v_0\in\bbD_2$ and let $v\in\bbD_2$ with
\[
\|v-v_0\|_{L^\infty([-d,d])}\leq\delta .
\]
Denote
\[
\widetilde {g_0}=\frG(v_0),\qquad \phi_0=\Psi(v_0),\qquad r_0=\frR_a(v_0),
\]
and define $\widetilde g,\phi,r$ similarly for $v$.

\medskip
\noindent
\textit{Control of $\widetilde g$ and $\phi$ near $v_0$.}
From the definition of $\frG$,
\[
\|\widetilde g-\widetilde {g_0}\|_{L^\infty([-d,d])}
\leq
\frac{2a}{2-a}\|v-v_0\|_{L^\infty([-d,d])}
\leq C\delta .
\]
By Lemma~\ref{lemma: g_properties2},
\[
1-\frac{2a}{3(2-a)}\,\mu d^2
\leq \widetilde {g_0}(x)\leq1
\qquad\text{on }[0,d].
\]
Hence for $\delta$ sufficiently small,
\[
\widetilde g(x),\widetilde {g_0}(x)\in\Big[\frac12,1\Big]
\quad\text{on }[0,d].
\]
Using the identity
\[
\frac{\widetilde g(y)-1}{y\,\widetilde g(y)}-\frac{\widetilde {g_0}(y)-1}{y\,\widetilde {g_0}(y)}
=\frac{\widetilde g(y)-\widetilde {g_0}(y)}{y\,\widetilde g(y)\,\widetilde {g_0}(y)},
\]
and the bounds $\frac12\leq \widetilde g,\widetilde {g_0}\leq1$, we obtain
\[
\left|\frac{\widetilde g(y)-1}{y\,\widetilde g(y)}-\frac{\widetilde {g_0}(y)-1}{y\,\widetilde {g_0}(y)}\right|
\leq \frac{4}{y}\,|\widetilde g(y)-\widetilde {g_0}(y)|.
\]
On the other hand, by Lemma~\ref{lemma: g_properties2},
\[
|\widetilde g(y)-1|,\,|\widetilde {g_0}(y)-1|
\leq
C y^2 .
\]
Hence
\[
|\widetilde g(y)-\widetilde {g_0}(y)|
\leq
|\widetilde g(y)-1|+|\widetilde {g_0}(y)-1|
\leq
C y^2 .
\]
Let $\delta=\|\widetilde g-\widetilde {g_0}\|_{L^\infty([-d,d])}$.  
For any $x\in[0,d]$, split the integral at $y=\sqrt{\delta}$.
For the first part,
\[
\left|
\int_0^{\min(x,\sqrt{\delta})}
\left(
\frac{\widetilde g-1}{y\widetilde g}-\frac{\widetilde {g_0}-1}{y\widetilde {g_0}}
\right)\idiff y
\right|
\leq
C\int_0^{\sqrt{\delta}} y\idiff y
\leq
C\delta .
\]
For the second part,
\[
\left|
\int_{\sqrt{\delta}}^x
\left(
\frac{\widetilde g-1}{y\widetilde g}-\frac{\widetilde {g_0}-1}{y\widetilde {g_0}}
\right)\idiff y
\right|
\leq
C\delta\int_{\sqrt{\delta}}^d\frac{\diff y}{y}
\leq
C\delta|\log\delta|.
\]
Combining the two bounds gives
\[
\sup_{x\in[0,d]}
\left|
\int_0^x
\left(
\frac{\widetilde g(y)-1}{y\,\widetilde g(y)}-\frac{\widetilde {g_0}(y)-1}{y\,\widetilde {g_0}(y)}
\right)\idiff y
\right|
\leq
C(\delta+\delta|\log\delta|)
\lesssim
\sqrt{\delta},
\]
for $\delta$ sufficiently small.
Therefore,
\[
\|\phi-\phi_0\|_{L^\infty([-d,d])}
\lesssim
\sqrt{\delta}.
\]

\medskip
\noindent
\textit{Continuity of $r=\frR_a(v)$.}
Write
\[
r=1-\frac{4-3a}{2-a}\,A\,B,
\]
where
\[
A(x)=x^{\frac{a}{2-a}}
\widetilde g(x)^{\frac12}
\phi(x)^{\frac{2-3a}{4-2a}},
\qquad
B(x)=\int_0^x
y^{\frac{2-2a}{2-a}}
\widetilde g(y)^{\frac{4-3a}{2a}}
\phi(y)^{\frac{7a^2-14a+8}{2a(2-a)}}
\idiff y,
\]
and define $A_0,B_0$ analogously.
Since $\widetilde g,\widetilde {g_0},\phi,\phi_0$ remain in fixed compact intervals by Lemmas~\ref{lemma: g_properties2}--\ref{lemma: phi_properties2}, the power maps are Lipschitz there. Hence
\[
\|A-A_0\|_{L^\infty}
\lesssim
\|\widetilde g-\widetilde {g_0}\|_{L^\infty}
+
\|\phi-\phi_0\|_{L^\infty}
\lesssim
\delta+\sqrt{\delta}.
\]
Similarly,
\[
\|B-B_0\|_{L^\infty}
\lesssim
\delta+\sqrt{\delta}.
\]
Finally,
\[
\|r-r_0\|_{L^\infty}
\leq
C\Big(
\|A-A_0\|_{L^\infty}\|B\|_{L^\infty}
+
\|A_0\|_{L^\infty}\|B-B_0\|_{L^\infty}
\Big)
\lesssim
\delta+\sqrt{\delta}.
\]
Therefore
\[
\|\frR_a(v)-\frR_a(v_0)\|_{L^\infty([-d,d])}\to0
\qquad\text{as }\delta\to0.
\]
Hence $\frR_a$ is continuous on $\bbD_2$ with respect to the $L^\infty$ norm.
\end{proof}

\begin{lemma}\label{lemma: D_compactness2}
    Let $a\in(0,1]$. The set $\bbD_2$ is compact with respect to the $L^\infty$-norm.
\end{lemma}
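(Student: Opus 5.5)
The argument mirrors Lemma~\ref{lemma: D_compactness} and rests on the Arzel\`a--Ascoli theorem. We take an arbitrary sequence $\{v_n\}\subset\bbD_2$ and extract an $L^\infty$-convergent subsequence whose limit again lies in $\bbD_2$. Since $L^\infty([-d,d])$ is a metric space, sequential compactness is equivalent to compactness.

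First, $\bbD_2$ is uniformly bounded. Every $v\in\bbD_2$ satisfies $0\leq 1-\mu x^2\leq v(x)\leq 1$ on $[-d,d]$, where $\mu d^2=1$ guarantees the lower bound is nonnegative on the whole interval. Hence $\|v\|_{L^\infty([-d,d])}\leq 1$. Second, $\bbD_2$ is equicontinuous, because every element is $K$-Lipschitz with the same constant $K<\infty$ fixed in the definition. Since $[-d,d]$ is compact, the Arzel\`a--Ascoli theorem gives a subsequence $v_{n_k}$ and a limit $v\in C([-d,d])$ with $\|v_{n_k}-v\|_{L^\infty}\to0$.

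It remains to check that $\bbD_2$ is closed under uniform (indeed pointwise) limits. Each defining condition passes to pointwise limits.
\begin{itemize}
\item Evenness: $v_{n_k}(x)=v_{n_k}(-x)$ for every $k$.
\item Two-sided bounds: $1-\mu x^2\leq v_{n_k}(x)\leq 1$ for every $k$.
\item Monotonicity on $[0,d]$: for $x\leq y$ in $[0,d]$, $v_{n_k}(y)\leq v_{n_k}(x)$ for every $k$.
\item Lipschitz bound: $|v_{n_k}(x)-v_{n_k}(y)|\leq K|x-y|$ for every $k$.
\end{itemize}
Letting $k\to\infty$ in each relation yields the same relation for $v$, so $v\in\bbD_2$.

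There is no genuine obstacle. The only point needing care is that the constants $\mu$, $d$, $K$ are independent of the sequence, and they are fixed in terms of $a$ alone, in both regimes $0<a\leq 2/3$ and $2/3<a\leq1$. This uniformity gives the common equicontinuity modulus and uniform bound that Arzel\`a--Ascoli requires.
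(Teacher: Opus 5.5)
Your proof is correct and follows the paper's argument: a uniform bound plus equicontinuity from the common $K$-Lipschitz constant, then Arzel\`a--Ascoli, then closedness of $\bbD_2$ under uniform limits. The only difference is that you check closedness condition by condition, while the paper simply asserts it.
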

\begin{proof}
Let $\{v_n\}\subset \bbD_2$. Since every $v_n$ is $K$-Lipschitz on $[-d,d]$, the family $\bbD_2$ is equicontinuous. 
Moreover, $\bbD_2$ is uniformly bounded in $L^\infty([-d,d])$. By the Arzel\`a--Ascoli theorem, there exists a subsequence $v_{n_k}$ and a function $v\in C([-d,d])$ such that
\[
\|v_{n_k}-v\|_{L^\infty([-d,d])}\to0 .
\]Since $\bbD_2$ is closed in $L^\infty$, we have $v\in\bbD_2$. Thus every sequence in $\bbD_2$ has a convergent subsequence in $L^\infty$, so $\bbD_2$ is compact in the $L^\infty$ norm.
\end{proof}
\begin{theorem}\label{theorem: existence of fixed point2}
Let $a\in(0,1]$. The map $\frR_a$ has a fixed point $v\in\bbD_2$, i.e. $\frR_a(v)=v.$
\end{theorem}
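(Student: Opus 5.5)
The argument is the same Schauder fixed-point scheme used for Theorem~\ref{theorem: existence of fixed point}, now applied to $\frR_a$ on $\bbD_2$. We verify the three hypotheses of Schauder's theorem: $\bbD_2$ is nonempty, convex, and compact in $C([-d,d])$ with the $L^\infty$-norm; $\frR_a$ maps $\bbD_2$ into itself; and $\frR_a$ is continuous on $\bbD_2$.

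Nonemptiness holds because $v\equiv 1$ belongs to $\bbD_2$: it is even, nonincreasing, satisfies $1-\mu x^2\leq 1\leq 1$, and is $0$-Lipschitz. The lower constraint $1-\mu x^2\geq 0$ on $[-d,d]$ is consistent because $\mu d^2=1$. Convexity is immediate, since each defining condition is preserved under convex combinations. Evenness, the two-sided pointwise bounds and the Lipschitz bound are convex conditions, and a convex combination of nonincreasing functions is nonincreasing. Closedness in $L^\infty$ follows because all of these conditions pass to uniform limits. Compactness is Lemma~\ref{lemma: D_compactness2}.

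The self-map property is Proposition~\ref{proposition: r_selfmap2}. It combines the bounds and monotonicity of Lemma~\ref{lemma: r_properties2} with the Lipschitz estimate of Lemma~\ref{lemma: r_Lipschitz2}. Two points need checking here.

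\emph{Evenness and regularity at the origin.} One should confirm that $\frR_a(v)$ is even and continuous at $x=0$. Evenness follows because $\frG(v)$ and $\Psi(v)$ are even when $v$ is. Continuity at $0$ follows from the bound $1-\mu x^2\leq\frR_a(v)\leq 1$, which gives $\frR_a(v)(0)=1$.

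\emph{The Lipschitz constant.} One should check that the bound obtained in Lemma~\ref{lemma: r_Lipschitz2} really is dominated by $K$. This requires a lower bound on $\frG(v)(d)$, which Lemma~\ref{lemma: g_properties2} supplies:
\[
\frG(v)(d)\geq 1-\frac{2a}{3(2-a)}=\frac{6-5a}{3(2-a)},
\]
so that
\[
\frac{\mu}{2\frG(v)(d)}\leq\frac{3(2-a)\mu}{2(6-5a)}.
\]
I expect this to be the only delicate point. The comparison of this constant with $K$ may need a small adjustment: either enlarge $K$, or note that $v\leq 1$ combined with a sharper bound on the factor $v/(2x\widetilde g)$ closes the estimate. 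In either case compactness is unaffected, since any finite Lipschitz constant suffices for Arzel\`a--Ascoli.

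Continuity is Proposition~\ref{proposition: r_continuity2}. Schauder's fixed-point theorem then gives $v\in\bbD_2$ with $\frR_a(v)=v$.
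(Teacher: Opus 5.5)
Your argument is the paper's own: Schauder's theorem on $\bbD_2$, using Proposition~\ref{proposition: r_selfmap2}, Proposition~\ref{proposition: r_continuity2}, Lemma~\ref{lemma: D_compactness2}, and the convexity and closedness of $\bbD_2$.

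The Lipschitz step you flag as delicate needs no adjustment. The coefficient in $r'$ is $v/(2xg)$, where $g=\frac{2-a}{2a}\frG(v)$ is the unnormalized $g$, not $v/(2x\frG(v))$. Hence $g(d)\geq\frac{6-5a}{6a}$, which gives $\frac{\mu}{2g(d)}\leq\frac{3a\mu}{6-5a}$, exactly the first term of $K$. Enlarging $K$ instead would change $\bbD_2$, so on its own it would not prove the statement as posed.
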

\begin{proof}
By Proposition~\ref{proposition: r_selfmap2}, the map $\frR_a$ maps $\bbD_2$ into itself.  
By Proposition~\ref{proposition: r_continuity2}, the map $\frR_a$ is continuous with respect to the $L^\infty$ norm.  
By Lemma~\ref{lemma: D_compactness2}, the set $\bbD_2$ is compact in $L^\infty([-d,d])$.
Moreover, $\bbD_2$ is convex and closed in $L^\infty([-d,d])$.
Therefore, by the Schauder fixed-point theorem, $\frR_a$ admits a fixed point in $\bbD_2$.
\end{proof}

The fixed point gives a local normalized profile on \([-d,d]\). To obtain a whole-space profile, or a compactly supported profile in the expanding regime, we must continue this local solution beyond \(d\). Away from the origin, the system is a regular first-order ODE as long as \(g>0\), so the main question is whether \(g\) remains positive for all \(x\), or reaches zero at a finite endpoint.

\subsection{Extension of the solution}

We have obtained a solution of \eqref{eqt:gmu2} on \([-d,d]\). We now extend it to the region \(|x|>d\). Since \(x\) is bounded away from zero in this region, the profile equations can be written as the ODE system below as long as \(g>0\). The behavior of this ODE system depends strongly on the sign of \(c_l=1-\frac32a\), and we therefore treat the cases \(a>2/3\) and \(a\leq2/3\) separately.

Writing $v:=u_x$, the system \eqref{eqt:gmu2} is equivalent, whenever $xg(x)\neq0$, to the first-order ODE system
\begin{equation}\label{eqt:gmu2_for_extension}
\begin{aligned}
m_x
&=
\frac{2v-a-2ag}{axg}m,\\
v_x
&=
\frac{a v(v-1)-4x^2m}{2axg},\\
g_x
&=
\frac{v+\frac1a-\frac32-g}{x}.
\end{aligned}
\end{equation}
The initial data at $x=d$ are inherited from the fixed-point solution constructed on $[-d,d]$.

\begin{proposition}\label{proposition: local_existence2}
Let $x_0>0$ and let $m_0,v_0,g_0$ satisfy $g_0>0$. Then there exists $\eps>0$ such that the system \eqref{eqt:gmu2_for_extension} admits a unique solution $(m,v,g)\in C^1([x_0,x_0+\eps])$ satisfying
\[
m(x_0)=m_0,\qquad v(x_0)=v_0,\qquad g(x_0)=g_0.
\]
\end{proposition}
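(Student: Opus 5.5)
This is the direct analogue of Proposition~\ref{proposition: local_existence}, and I will prove it the same way: verify local Lipschitz continuity of the right-hand side near the initial point, then invoke Picard--Lindel\"of. Denote by $F(x,m,v,g)$ the right-hand side of \eqref{eqt:gmu2_for_extension}, so that
\[
F_1=\frac{2v-a-2ag}{axg}\,m,\qquad
F_2=\frac{av(v-1)-4x^2m}{2axg},\qquad
F_3=\frac{v+\frac1a-\frac32-g}{x}.
\]
Each component is a rational function of $(x,m,v,g)$. The only denominators are $a$, $x$ and $g$, where $a\in(0,1]$ is a fixed positive constant.

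Since $x_0>0$ and $g_0>0$, I choose the compact neighborhood
\[
\mathcal N:=\Bigl\{(x,m,v,g):\ |x-x_0|\leq \tfrac{x_0}{2},\ |m-m_0|\leq 1,\ |v-v_0|\leq 1,\ |g-g_0|\leq \tfrac{g_0}{2}\Bigr\}.
\]
On $\mathcal N$ we have $x\geq x_0/2$ and $g\geq g_0/2$, so all denominators are bounded below by positive constants. Hence $F$ is $C^\infty$ on $\mathcal N$. Its partial derivatives in $(m,v,g)$ are continuous on this compact set, so they are bounded there, and $F$ is Lipschitz in $(m,v,g)$ uniformly in $x$ on $\mathcal N$. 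The function $F$ is also bounded on $\mathcal N$, say $|F|\leq C_F$.

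The Picard--Lindel\"of theorem then applies with time step
\[
\eps:=\min\Bigl(\tfrac{x_0}{2},\,\tfrac{1}{C_F},\,\tfrac{g_0}{2C_F}\Bigr),
\]
which keeps the trajectory inside $\mathcal N$. It gives a unique solution $(m,v,g)\in C^1([x_0,x_0+\eps])$ with the prescribed data at $x_0$.

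No step is a genuine obstacle. The only point needing care is that the bound $g\geq g_0/2$ must persist along the trajectory, so that the ODE remains nonsingular; the choice of $\eps$ above guarantees this.
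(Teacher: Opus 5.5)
Your proof is correct and follows the same route as the paper. The paper's proof simply refers back to Proposition~\ref{proposition: local_existence}: keep $x\geq x_0/2$ and $g\geq g_0/2$ near the initial point, conclude that the right-hand side is locally Lipschitz in $(m,v,g)$, and apply Picard--Lindel\"of. Your explicit box $\mathcal N$ and choice of $\eps$ make precise the step the paper only asserts, namely that $\eps$ can be chosen so the trajectory stays where $x$ and $g$ are bounded away from zero.
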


\begin{proof}
The proof is similar to that of Proposition~\ref{proposition: local_existence}, with \eqref{eqt:gmu_for_extension} replaced by \eqref{eqt:gmu2_for_extension}, and is therefore omitted here.
\end{proof}

The local continuation criterion shows that the solution can be extended as long as \(g\) stays positive and the solution remains bounded. We first consider the case \(a>2/3\), where \(c_l<0\). In this regime the solution reaches a finite endpoint, and the resulting profile will be compactly supported after extension.

\begin{proposition}\label{proposition: global_extension_supercritical}
Let $\frac23<a<1$. Then the solution constructed on $[-d,d]$ extends uniquely to a maximal interval $(-L,L)$, where $d<L<+\infty$, such that
\[
(m,v,g)\in C^1((-L,L)),
\qquad
g(x)>0 \quad \text{for } x\in(-L,L).
\]
Moreover,
\[
\lim_{x\to L^-}g(x)=0,\qquad
\lim_{x\to L^-}m(x)=0,\qquad
\lim_{x\to L^-}v(x)=0 .
\]
In particular, the profile reaches a finite endpoint at which $g(L)=0$.
\end{proposition}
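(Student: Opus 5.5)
The plan mirrors the periodic analysis (Lemma~\ref{lemma: monotonicity_extension}, Corollary~\ref{corollary: global_extension}, Proposition~\ref{proposition: maximal_interval}), adapted to \eqref{eqt:gmu2_for_extension}. Throughout, write \(\kappa:=\frac1a-\frac32=c_l/a\); note \(\kappa<0\) for \(a>2/3\), and \((xg)_x=v+\kappa\).

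\emph{Step 1: maximal continuation and sign information.} By Proposition~\ref{proposition: local_existence2} the fixed-point solution extends uniquely to a maximal \((-L,L)\) on which \(g>0\).
\begin{itemize}
\item Positivity of \(m\) follows by integrating the \(m\)-equation, which is linear in \(m\).
\item The bound \(v<1\) follows by the first-touching argument: at a first point with \(v=1\) one gets \(v_x=-2xm/(ag)<0\).
\item While \(0\le v<1\) the \(v\)-equation gives \(v_x<0\). If \(v\) reaches \(0\), it crosses strictly downward, since there \(v_x=-2xm/(ag)<0\).
\end{itemize}
As in Lemma~\ref{lemma: monotonicity_extension}, I would use \(xg(x)=dg(d)+\int_d^x(v+\kappa)\) together with the monotonicity of \(v\) while it is positive, and \(v(d)+\kappa\le g(d)\) from the fixed-point bounds. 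This should give \(g>v+\kappa\), hence \(g_x<0\), so \(g\) is decreasing on \((d,L)\).

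\emph{Step 2: integral representation for \(v\).} The \(v\)-equation is linear in \(v\) with a forcing term:
\[
v_x-\frac{v-1}{2xg}\,v=-\frac{2xm}{ag}.
\]
Substituting \(v-1=(xg)_x-\kappa-1\) and \(\kappa+1=g(0)\) gives \(\frac{v-1}{2xg}=\tfrac12(\log(g\phi))_x\) with \(\phi=\exp\int_0^x\frac{g-g(0)}{yg}\). Hence
\[
v(x)=\sqrt{g\phi}\Big(\frac{v(d)}{\sqrt{g(d)\phi(d)}}-\int_d^x\frac{2ym}{a\,g^{3/2}\phi^{1/2}}\Big).
\]
Since \(g<g(0)\) beyond \(d\), \(\phi\) is decreasing. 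Consequently \(v\le C\sqrt{g\phi}\).

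\emph{Step 3: \(L<\infty\).} Suppose \(L=\infty\). Since \(g\) is decreasing and bounded by \(g(d)<g(0)\), we have \(\phi\le Cx^{-(g(0)/g(d)-1)}\to0\). Then Step 2 gives \(\limsup v\le0\). Hence \((xg)_x\le\kappa+\varepsilon<0\) for large \(x\), forcing \(xg<0\) eventually, a contradiction.

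\emph{Step 4: identification of the limits at \(L\).}
\begin{itemize}
\item \(g\to0\). If \(g_*=\lim g>0\), then \(\phi\) is bounded above and below on \([x_0,L)\), and \(m\) is bounded because \(m_x/m\) is bounded there. Step 2 then bounds \(v\), so the right-hand side of \eqref{eqt:gmu2_for_extension} stays bounded, contradicting maximality.
\item \(m\to0\). A direct computation gives
\[
\frac{m_x}{m}=\frac{2g_x}{ag}+\frac{2(1-a)}{ax}-\frac{(1-a)(2-a)}{a^2\,xg},
\]
and the last term is \(\le0\). Since \(L<\infty\), it follows that \(m\le Cg^{2/a}\), so \(m\to0\).
\item \(v\to0\). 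In Step 2, \(\sqrt{g\phi}\to0\). It remains to show the integral term times \(\sqrt{g\phi}\) vanishes. Using \(m\le Cg^{2/a}\) and \(\phi(x)\le\phi(y)\) for \(y\le x\), this term is bounded by \(C\sqrt{g(x)}\int_d^L y\,g^{2/a-3/2}\). That integral is finite, since \(2/a-3/2>0\) and \(L<\infty\). Hence \(v\to0\).
\end{itemize}

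The main obstacle I expect is Step 1, namely the monotonicity of \(g\) once \(v\) becomes negative. Unlike the periodic case, \(av(v-1)>0\) for \(v<0\), so \(v\) need not stay monotone. I would handle this by noting that \(g_x=(v+\kappa-g)/x<0\) holds trivially whenever \(v\le0\), because \(\kappa<0<g\). The comparison argument is therefore only needed on the initial stretch where \(v>0\) and \(v\) is decreasing. If Step 1 fails, a weaker substitute suffices for Steps 3--4: it is enough that \(g\) stays below \(g(d)\), so that \(\phi\) is decreasing.
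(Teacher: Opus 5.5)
Your proposal is correct and follows essentially the same route as the paper. It establishes positivity of $m$, $v<1$ and monotonicity of $g$ by the argument of Lemma~\ref{lemma: monotonicity_extension}, uses the integrating-factor representation $v=\sqrt{g\phi}\,(\cdots)$, excludes $L=+\infty$ via $\phi\to0$, $\limsup v\le 0$ and $(xg)_x=v+\frac1a-\frac32$, then obtains $g_*=0$ by maximality and derives the endpoint limits of $m$ and $v$.

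The only difference is cosmetic. You bound $m\le Cg^{2/a}$ from your decomposition of $m_x/m$ (using $L<\infty$), whereas the paper uses the monotonicity of $m/g^2$ to get $m\le Cg^2$. Either bound suffices both for bounding $m$ when $g_*>0$ and for killing the integral term in the representation of $v$.
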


\begin{proof}
    By Proposition~\ref{proposition: local_existence2}, the solution constructed on \([-d,d]\) extends uniquely to a maximal interval \((-L,L)\), with \(d<L\leq+\infty\), such that \((m,v,g)\in C^1((-L,L))\) and \(g>0\). By symmetry, it suffices to study the solution on \((d,L)\). We will show below that the same argument as in the previous section can be adapted to prove both the finiteness of \(L\) and the limiting behavior at the endpoint. The proof has three steps. We first obtain monotonicity and a representation formula for \(v\). We then rule out an infinite interval of existence. Finally, we show that the only possible finite endpoint is characterized by \(g,m,v\to0\).

    The same argument as in Lemma~\ref{lemma: monotonicity_extension}, applied to
\eqref{eqt:gmu2_for_extension}, implies the following properties on \((d,L)\):
\(m(x)>0\), \(v(x)<1\), \(g_x(x)<0\), and \((m/g^2)_x(x)\leq0\).
In particular, \(g\) and \(m/g^2\) are monotonically decreasing on \((d,L)\).
Hence the limit \[g_*:=\lim_{x\to L^-}g(x)\] exists with \(g_*\geq0\).

We derive a representation formula for \(v\), which will be used in the two contradiction arguments below. From \eqref{eqt:gmu2_for_extension}
we have
\[
    v_x
    =
    \frac{v(v-1)}{2xg}
    -
    \frac{2xm}{ag}
    =
    \left(
        \frac{g_x}{2g}
        +
        \frac{g-g(0)}{2xg}
    \right)v
    -
    \frac{2xm}{ag}.
\]
This is a linear equation for \(v\). Define
\[
    \phi(x)
    :=
    \exp\left(
        \int_0^x
        \frac{g(y)-g(0)}{y g(y)}\,\mathrm dy
    \right).
\]
Then
\[
    \left(\frac{v}{\sqrt{g\phi}}\right)_x
    =
    -\frac{2xm}{a g^{3/2}\phi^{1/2}}.
\]
Integrating from \(d\) to \(x\), for \(d<x<L\), gives
\begin{equation}\label{eq:v_representation2}
v(x)
=
\sqrt{g(x)\phi(x)}
\left(
\frac{v(d)}{\sqrt{g(d)\phi(d)}}
-
\frac{2}{a}\int_d^x
\frac{y\,m(y)}{g(y)^{3/2}\phi(y)^{1/2}}
\,\mathrm dy
\right).
\end{equation}

We first rule out \(L=+\infty\). Suppose, for contradiction, that
\(L=+\infty\). Since \(g_x<0\), for \(x\geq d\),
\[
    \frac{g(x)-g(0)}{g(x)}
    =
    1-\frac{g(0)}{g(x)}
    \leq
    1-\frac{g(0)}{g(d)}
    <0,
    \qquad x\geq d.
\]
It follows from the definition of \(\phi\) that
\[
\begin{aligned}\label{eq:global_extension_supercritical_decay_phi}
    \phi(x)
    &=
    \phi(d)
    \exp\left(
        \int_d^x
        \frac{g(y)-g(0)}{y g(y)}\,\mathrm dy
    \right)  \\
    &\leq
    \phi(d)
    \left(\frac{x}{d}\right)^{1-\frac{g(0)}{g(d)}} ,
\end{aligned}
\]
and hence
\[
    \lim_{x\to+\infty}\phi(x)=0.
\]
Using \eqref{eq:v_representation2} and \(m>0\), we get
\[
    v(x)
    \leq
    \sqrt{g(x)\phi(x)}
    \frac{v(d)}{\sqrt{g(d)\phi(d)}} .
\]
Since \(0<g(x)\leq g(d)\) and \(\phi(x)\to0\), this implies
\[
    \limsup_{x\to+\infty}v(x)\leq0.
\]
On the other hand, integrating
\[
    (xg)_x=v+\frac1a-\frac32
\]
from \(0\) to \(x\) gives
\begin{equation}\label{eq:xg_integral_identity2}
    xg(x)
    =
    \int_0^x v(y)\,\mathrm dy
    -
    \left(\frac32-\frac1a\right)x
    =
    \int_0^x v(y)\,\mathrm dy
    +
    \left(\frac1a-\frac32\right)x .
\end{equation}
Since \(a>\frac23\), we have \(\frac32-\frac1a>0\). By
\(\limsup_{x\to+\infty}v(x)\leq0\), for \(X>0\) sufficiently large,
\[
    v(x)\leq \frac12\left(\frac32-\frac1a\right),
    \qquad x\geq X.
\]
Thus, for \(x\geq X\),
\[
\begin{aligned}
    xg(x)
    &=
    \int_0^X v(y)\,\mathrm dy
    +
    \int_X^x v(y)\,\mathrm dy
    -
    \left(\frac32-\frac1a\right)x  \\
    &\leq
    \int_0^X v(y)\,\mathrm dy
    +
    \frac12\left(\frac32-\frac1a\right)(x-X)
    -
    \left(\frac32-\frac1a\right)x .
\end{aligned}
\]
Dividing by \(x\) and letting \(x\to+\infty\), we obtain
\[
    \limsup_{x\to+\infty}g(x)
    \leq
    -\frac12\left(\frac32-\frac1a\right)<0,
\]
which contradicts \(g(x)>0\). Hence $L<+\infty$.

We now prove \(g_*=0\). Suppose, for contradiction, that \(g_*>0\). Since
\(L<+\infty\), for \(x\) sufficiently close to \(L\), \(g(x)\) is bounded
above and bounded away from zero. The bound
\[
    0<m(x)\leq \frac{m(d)}{g(d)^2}g(x)^2
\]
then implies that \(m\) remains bounded. Moreover, by the definition of
\(\phi\), both \(\phi\) and \(1/\phi\) are bounded on \([d,L)\). Therefore the
integrand in \eqref{eq:v_representation2} is bounded on \([d,L)\), and the
integral remains finite as \(x\to L^-\). Hence \(v\) remains bounded on
\([d,L)\). Consequently \(m\), \(v\), and \(g\) remain bounded, while \(g\)
stays bounded away from zero. The ODE system is therefore nondegenerate near
\(L\), so the local existence theorem extends the solution past \(L\),
contradicting the maximality of \(L\). Thus   $g_*=0.$

As observed above, this immediately gives
\[
    \lim_{x\to L^-}m(x)=0.
\]
It remains to prove the endpoint limit of \(v\). Since \(L<+\infty\) and
\(g(x)\to0\), while \(\phi\) is positive and decreasing on \((d,L)\), we have
\[
    \sqrt{g(x)\phi(x)}\to0.
\]
We claim that the integral term in \eqref{eq:v_representation2}, after
multiplication by \(\sqrt{g(x)\phi(x)}\), also tends to zero. Indeed, using
\(m\leq Cg^2\), we have
\[
    \frac{y\,m(y)}{g(y)^{3/2}\phi(y)^{1/2}}
    \leq
    C\frac{g(y)^{1/2}}{\phi(y)^{1/2}},
    \qquad d<y<L.
\]
Since \(\phi\) is decreasing, for \(d<y<x<L\),
    $\phi(y)\geq \phi(x).$
Therefore,
\[
\begin{aligned}
&\sqrt{g(x)\phi(x)}
\int_d^x
\frac{y\,m(y)}{g(y)^{3/2}\phi(y)^{1/2}}
\,\mathrm dy  \\
&\qquad\leq
C\sqrt{g(x)\phi(x)}
\int_d^x
\frac{g(y)^{1/2}}{\phi(y)^{1/2}}
\,\mathrm dy  \\
&\qquad\leq
C\sqrt{g(x)}
\int_d^x g(y)^{1/2}\,\mathrm dy.
\end{aligned}
\]
Since \(g\) is decreasing and \(L<+\infty\),
\[
    \sqrt{g(x)}
    \int_d^x g(y)^{1/2}\,\mathrm dy
    \leq
    \sqrt{g(x)}\,\sqrt{g(d)}\,(L-d)
    \to0
    \qquad \text{as } x\to L^-.
\]
Thus the integral term in \eqref{eq:v_representation2} vanishes after
multiplication by \(\sqrt{g(x)\phi(x)}\). Since the homogeneous term also
vanishes, \eqref{eq:v_representation2} gives
\[
    \lim_{x\to L^-}v(x)=0.
\]
This completes the proof.
\end{proof}

We next turn to the case \(0<a\leq2/3\), where \(c_l\geq0\). In contrast to the previous regime, the solution does not terminate at a finite endpoint. Instead, the ODE extension is global, and the Neumann condition at infinity emerges from the far-field limits of \(v=u_x\) and \(g\).

\begin{proposition}\label{proposition: global_extension_subcritical}
Let $0<a\leq \frac23$. Then the solution constructed on $[-d,d]$ extends uniquely to a global solution
\[
(m,v,g)\in C^1(\mathbb R)
\]
of \eqref{eqt:gmu2_for_extension}. Moreover, on $(0,+\infty)$, we have $m(x)>0$ and $g(x)>0$, and
\[
\lim_{x\to+\infty}m(x)=0,\qquad
\lim_{x\to+\infty}v(x)=0,\qquad
\lim_{x\to+\infty}g(x)=\frac1a-\frac32 .
\]
Equivalently, the corresponding profile satisfies
\[
\lim_{x\to+\infty}u_x(x)=0,
\qquad
\lim_{x\to+\infty}\frac{u(x)}{x}=0 .
\]
\end{proposition}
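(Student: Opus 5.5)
The plan is to follow the proof of Proposition~\ref{proposition: global_extension_supercritical} and the $a=2/3$ part of Proposition~\ref{proposition: maximal_interval}, using throughout that $c:=\frac1a-\frac32\geq0$. By Proposition~\ref{proposition: local_existence2}, the fixed point of Theorem~\ref{theorem: existence of fixed point2} extends to a maximal interval $(-L,L)$ on which $g>0$. By evenness of $v$ it suffices to work on $(d,L)$.

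\emph{Step 1: basic facts on $(d,L)$.} From \eqref{eq:m_representation2}, i.e. $m=\frac{4-3a}{4}(g/g(0))^{2/a}\phi^{2(1-a)/a}$, we get $m>0$. The first-crossing argument of Lemma~\ref{lemma: monotonicity_extension} gives $v<1$: at a first point where $v=1$ one has $v_x=-2xm/(ag)<0$. The representation \eqref{eq:v_representation2} remains valid, with the integral term strictly decreasing. From $(xg)_x=v+c$ and $g(0)=c+1>c+v$, I expect $g$ to dominate $v+c$ as long as $v$ is decreasing, which gives $g_x<0$. Whenever $v$ crosses $0$, it does so strictly downward, since there $v_x=-2xm/(ag)<0$.

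\emph{Step 2: ruling out $L<\infty$ (the main obstacle).} Suppose $L<\infty$. The argument of Proposition~\ref{proposition: global_extension_supercritical} still shows that $g_*=\lim g=0$; otherwise the solution would stay bounded and nondegenerate, and we could continue past $L$. Hence $u=xg\to0$, and since $m\leq Cg^2$ also $m\to0$. The multiplied-integral estimate from that proof then gives $v\to0$.
\begin{itemize}
\item If $c>0$ (i.e. $a<2/3$), then $u_x=v+c\to c>0$ near $L$, so $u$ is increasing and cannot tend to $0$. This is a contradiction.
\item If $a=2/3$, I will copy Case~2 of Proposition~\ref{proposition: maximal_interval}. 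First, $(\log(m/v^2))_x<0$ gives $m\lesssim v^2=o(-v)$. Then $v_x\geq -v/(4u)$. Integrating $dv/du\leq -1/(4u)$ yields a logarithmic divergence, which contradicts $v\to0$.
\end{itemize}
In the case $a=2/3$ one must also check that $v$ must cross $0$ before $u\to0$ is possible; if $v>0$ throughout, then $u$ is increasing. The delicate point in both cases is to ensure that $v$ stays bounded below, so that the estimates are legitimate. I will try to get this from the representation \eqref{eq:v_representation2} together with $m\leq Cg^2$ and the monotonicity of $\phi$.

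\emph{Step 3: far-field limits.} Now $L=+\infty$, and $g$ is decreasing and positive, so $g_*=\lim g$ exists.
\begin{itemize}
\item Since $g(x)\leq g(d)<g(0)$, the definition of $\phi$ gives $\phi(x)\lesssim x^{1-g(0)/g(d)}\to0$. Then \eqref{eq:v_representation2} gives $\limsup v\leq0$.
\item I will argue that $\liminf v\geq0$. Near a negative value, $av(v-1)>0$ dominates the term $4x^2m$, because $m\lesssim g^{2/a}\phi^{2(1-a)/a}$ decays polynomially fast enough. This forces $v$ back upward, and with $u>0$ it excludes a negative limit. Hence $v\to0$.
\item Since $g$ is the average $x^{-1}\int_0^x(v+c)\,\mathrm{d}y$, it follows that $g\to c$. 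Then $m\to0$ from $\phi\to0$ when $a<1$ and $g$ is bounded.
\item For $a=2/3$ we have $g\to0$, so $m\leq Cg^2\to0$.
\end{itemize}
Finally, $u/x=g-c\to0$ and $u_x=v\to0$, which is the Neumann condition.
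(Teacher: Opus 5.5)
Your architecture matches the paper's: maximal extension with $g>0$, propagation of $m>0$ and $v<1$, the representation \eqref{eq:v_representation2}, ruling out a finite endpoint because small $g$ forces small $v$ and hence $(xg)_x=v+c\approx c>0$, the case $a=2/3$ via Proposition~\ref{proposition: maximal_interval}, and far-field limits from $\phi\to0$ and averaging. Two steps fail as written, though both can be repaired.

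\textbf{First gap: $g$ is not globally decreasing.} Your Step~1 only gives $g_x<0$ while $v$ is nonincreasing on $[0,x]$. Once $v<0$, the term $av(v-1)>0$ can push $v$ back up. Moreover $g-c=u/x=\frac1x\int_0^x v$ is a running average that can become negative, after which $g$ must increase to reach its limit $c$. This is expected for small $a$, where the profile is close to the $a=0$ one and $u_0(x)=-x+2\sqrt2\arctan(x/\sqrt2)<0$ for large $x$. That is why the paper's Case~2 uses only $g\le g(d)<g(0)$. This bound comes from a barrier: $v<g(d)-c$ on $(d,L)$, so $xg_x=v+c-g<0$ wherever $g=g(d)$.

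Without monotonicity, at a hypothetical finite $L$ you only get $\liminf g=0$. So the chain ``$g_*=\lim g=0$, hence $xg\to0$ and $v\to0$, hence $(xg)_x\to c$'' is unjustified. Likewise, $m\le Cg^2$ must come from \eqref{eq:m_representation2} (namely $m\le Cg^{2/a}$, since $\phi\le1$), not from monotonicity of $m/g^2$. The fix is the paper's inward-pointing argument. Using \eqref{eq:v_representation2} with $m\le Cg^2$ and $\phi$ decreasing gives $|v|\le C\sqrt g$ on $(d,L)$. Hence $(xg)_x\ge c/2$ wherever $g$ is small, which keeps $g$ away from zero.

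\textbf{Second gap: the far-field bound $\liminf v\ge0$.} Your barrier idea is right, but it needs $x^2m\to0$, and the bound you set up does not give this. The function $g$ does not decay, and you only have $\phi\lesssim x^{1-g(0)/g(d)}$ with $g(0)/g(d)-1\le 1/(3c+2)$, because $g(d)\ge c+\frac23$. This yields at best $x^2m\lesssim x^{(8-6a)/(6-5a)}$, which grows.

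Here is a route that works. The only nontrivial case is $v<0$ on $(x_0,\infty)$, and there \eqref{eqt:gmu2_for_extension} gives
\[
(\log m)_x=\frac{2v}{axg}-\frac{1}{xg}-\frac{2}{x}\le-\frac{2}{x}-\frac{1}{g(d)\,x},
\]
so $x^2m\lesssim x^{-1/g(d)}\to0$. Alternatively, you can bootstrap your own line: $\limsup v\le0$ and averaging give $\limsup g\le c$, which improves the decay of $\phi$ to nearly $x^{-1/c}$ and gives $x^2m\lesssim x^{-2a/(2-3a)+\delta}$.

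With $x^2m\to0$ in hand, for large $x$ every point with $v\le-\eps$ satisfies $v_x\ge \eps/(4g(d)x)>0$. So $\{v\le-\eps\}$ cannot be re-entered, and it cannot contain a half-line since $\int^\infty dx/x=\infty$. This gives $\liminf v\ge0$.

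Your remark that positivity of $u$ excludes a negative limit does not do this job. A limit in $(-c,0)$ is compatible with $xg>0$, and $v$ is not known to converge.

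Once repaired, this ODE barrier is actually more robust than the paper's step at this point. The paper deduces $v\to0$ in Case~2 from \eqref{eq:global_extension_subcritical_decay_integral}, but that right-hand side grows like $x^2$ when $g\to c>0$.
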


\begin{proof}
We first consider the endpoint case \(a=\frac23\). Since \(c_l=0\), the system
\eqref{eqt:gmu2_for_extension} coincides with the system studied in the
previous section. The desired global existence and uniqueness therefore
follow from Corollary~\ref{corollary: global_extension} and Proposition~\ref{proposition: maximal_interval}. The stated limits also agree with the limits proved there, since \(\frac1a-\frac32=0\) when \(a=\frac23\). Thus it remains
to treat the case \(0<a<\frac23\).

By Proposition~\ref{proposition: local_existence2}, the solution constructed
on \([-d,d]\) extends uniquely to a maximal interval \((-L,L)\), with
\(d<L\leq+\infty\), such that
\[
    (m,v,g)\in C^1((-L,L)),
    \qquad
    g(x)>0 \quad \text{for } x\in(-L,L).
\]
By symmetry, it suffices to study the solution on \((d,L)\).

We first record the basic sign properties that persist on \((d,L)\). Since
the equation for \(m\) has the form
\[
    m_x
    =
    \left(
        \frac{2xg_x+2(1-a)(g-g(0))}{axg}
    \right)m,
\]
and \(m(d)>0\), we have
\[
    m(x)>0,\qquad d<x<L.
\]
Next, \(v<1\) also propagates. Indeed, at any point where \(v=1\), the
equation
\[
    2axg\,v_x=a v(v-1)-4x^2m
\]
gives
\[
    v_x=-\frac{2xm}{ag}<0.
\]
Since \(v(d)<1\), this prevents \(v\) from crossing the level \(1\) from
below. Hence
\[
    v(x)<1,\qquad d<x<L.
\]
The only obstruction to continuing the solution past a finite endpoint
\(L\) is either the loss of positivity of \(g\), or the unboundedness of the
solution components as \(x\to L^-\).

The proof is organized according to the sign of \(v\). If \(v\) remains nonnegative, monotonicity gives direct decay estimates. If \(v\) becomes negative, we use the one-sided invariance of the region \(\{v<0\}\) and an inward-pointing argument at the boundary \(g=0\) to rule out finite-time loss of positivity.

\textit{Case 1: \(v\) remains nonnegative.}
We consider the case
\[
    v(x)\geq0 \qquad \text{for all } x\in(d,L).
\]
Since \(0<a<\frac23\), we have \(\frac1a-\frac32>0\). By the identity \eqref{eq:xg_integral_identity2}, we have
\begin{equation}\label{eq:global_extension_subcritical_decay_g}
    g(x)
    =
    \frac1x\int_0^x v(y)\,\mathrm dy
    +
    \frac1a-\frac32
    >0,
    \qquad d<x<L .
\end{equation}
Thus \(g\) is bounded from below by a positive constant.
Moreover, since \(0\leq v<1\), \(m>0\), and \(g>0\), the equation
\[
    v_x
    =
    \frac{v(v-1)}{2xg}
    -
    \frac{2xm}{ag}
\]
implies
   $ v_x\leq0 .$
Therefore \(v\) is decreasing on \((d,L)\). As in the proof of
Proposition~\ref{proposition: global_extension_supercritical}, the same sign
argument gives that \(g\) and \(m/g^2\) are decreasing. By the representation formula
\eqref{eq:m_representation2}, we have
\begin{equation}\label{eq:global_extension_subcritical_decay_m_over_g}
        m(x)
    \leq
    C g(x)^{\frac{2}{a}}\phi^{\frac{2(1-a)}{a}} .
\end{equation}
Therefore \(m\), \(v\), and \(g\) remain bounded on any finite interval before
\(L\), while \(g\) stays positive. The local extension theorem then rules out
\(L<+\infty\). Hence \(L=+\infty\).
Using the monotonicity,
we obtain
\[
    v_x
    \leq
    \frac{v(v-1)}{2xg}
    \leq
    \frac{v\,(v(d)-1)}{2xg(0)}
    =
    -\frac{1-v(d)}{2g(0)}\,\frac{v}{x}.
\]
and consequently, for \(d<x<L\),
\begin{equation}\label{eq:global_extension_subcritical_decay_v}
    0\leq v(x)
    \leq
    v(d)\left(\frac{d}{x}\right)^{\frac{1-v(d)}{2g(0)}},
    \qquad d<x<L .
\end{equation}
The above decay estimate \eqref{eq:global_extension_subcritical_decay_g}, 
\eqref{eq:global_extension_subcritical_decay_m_over_g} and \eqref{eq:global_extension_subcritical_decay_v} gives
\[
\lim_{x\to+\infty}m(x)=0,\qquad
\lim_{x\to+\infty}v(x)=0,\qquad
\lim_{x\to+\infty}g(x)=\frac1a-\frac32 .
\]

 \textit{Case 2: \(v\) becomes negative.}   We next consider the case where \(v\) becomes negative somewhere on
\((d,L)\). We first show that, once this happens, \(v\) remains negative
afterwards. Suppose that \(v(x_0)<0\) for some \(x_0\in(d,L)\), then \(v(x)<0\) for all
\(x\in[x_0,L)\). Indeed, if \(v\) returned to zero, at the first such point
\(x_1>x_0\) one would have \(v_x(x_1)\geq0\). But evaluating
\[
    v_x
    =
    \frac{v(v-1)}{2xg}
    -
    \frac{2xm}{ag}
\]
at \(v(x_1)=0\) gives
\[
    v_x(x_1)=-\frac{2x_1m(x_1)}{ag(x_1)}<0,
\]
which is a contradiction. Moreover, before \(v\) becomes negative, the argument of the first case applies. In particular, \(g\) is bounded above by \(g(d)\) on \((d,x_0]\). Since \(v<0\) on \((x_0,L)\), the identity \[ xg_x=v-g+\frac1a-\frac32 \] implies, in particular, that \(g\) cannot increase above its previous upper bound. Hence \[ g(x)\leq g(d)<g(0),\qquad x_0<x<L . \]
Consequently, \[ \frac{g(x)-g(0)}{xg(x)}<\frac{g(x)-g(d)}{xg(x)}<0, \qquad x_0<x<L, \] and therefore the function \[ \phi(x) = \exp\left( \int_0^x \frac{g(y)-g(0)}{y g(y)}\,\mathrm dy \right) \] is decreasing on \((x_0,L)\) and satisfies
\begin{equation}\label{eq:global_extension_subcritical_decay_phi}
\begin{aligned}
    \phi(x)
    &=
    \phi(x_0)
    \exp\left(
        \int_{x_0}^x
        \frac{g(y)-g(0)}{y g(y)}\,\mathrm dy
    \right)  \\
    &\leq
    \phi(x_0)
    \left(\frac{x}{x_0}\right)^{1-\frac{g(0)}{g(d)}} .
\end{aligned}
\end{equation}

We claim that \(L<+\infty\) is impossible. Suppose, for contradiction, that \(L<+\infty\). Since \(\phi\) is positive and decreasing on \((x_0,L)\), it has a finite nonnegative limit as \(x\to L^-\). By the estimate \eqref{eq:global_extension_subcritical_decay_m_over_g}, we have
\begin{equation}
        m(x)
    \leq
    C g(x)^{\frac{2}{a}}\phi^{\frac{2(1-a)}{a}},
    \qquad x_0<x<L .
\end{equation}
In particular, if \(g\) is bounded on \((x_0,L)\), then \(m\) is also bounded
there.
Moreover, using \eqref{eq:v_representation2} and the above bound on \(m\), we
obtain, as in the proof of
Proposition~\ref{proposition: global_extension_supercritical},
\begin{equation}\label{eq:global_extension_subcritical_decay_integral}
\begin{aligned}
&\sqrt{g(x)\phi(x)}
\int_d^x
\frac{y\,m(y)}{g(y)^{3/2}\phi(y)^{1/2}}
\,\mathrm dy \\
&\qquad\leq
C\sqrt{g(x)\phi(x)}
+
C\sqrt{g(x)}
\int_{x_0}^x
y\,g(y)^{\frac2a-\frac32}
\,\mathrm dy .
\end{aligned}
\end{equation}
Since \(0<a<\frac23\), we have \(\frac2a-\frac32>0\). Therefore, if
\(L<+\infty\) and \(g\) is bounded on \((x_0,L)\), the right-hand side is
bounded. Hence the integral term in \eqref{eq:v_representation2}, after
multiplication by \(\sqrt{g(x)\phi(x)}\), remains bounded. The homogeneous
term in \eqref{eq:v_representation2} is also bounded under the same
assumption. Consequently \(v\) remains bounded on \((x_0,L)\).
Thus, if a finite endpoint \(L\) occurs and \(g\) remains bounded above and
bounded away from zero, then both \(m\) and \(v\) remain bounded. The ODE
system is then nondegenerate near \(L\), so the solution can be continued past
\(L\), contradicting maximality. Hence the only possible finite-endpoint
obstruction is
\[
    \liminf_{x\to L^-}g(x)=0 .
\]
However, even this remaining possibility leads to a contradiction. Since \(L<+\infty\), \(g\) is bounded on \((x_0,L)\). Using
the estimate \eqref{eq:global_extension_subcritical_decay_integral} above  we obtain
\[
\sqrt{g(x)\phi(x)}
\int_d^x
\frac{y\,m(y)}{g(y)^{3/2}\phi(y)^{1/2}}
\,\mathrm dy 
\to0
\]
as \(g(x)\to0\). Hence, by \eqref{eq:v_representation2},
\[
    v(x)\to0
    \qquad \text{as } g(x)\to0 .
\]
Therefore, when \(g(x)\) is sufficiently small, we have
\[
    xg_x=v(x)-g(x)+\frac1a-\frac32
    >
    \frac12\left(\frac1a-\frac32\right)>0 .
\]
Since \(x>0\), this gives \(g_x(x)>0\) whenever \(g(x)\) is sufficiently
small. This is the inward-pointing property of the boundary \(g=0\), and it
prevents \(g\) from approaching zero from within the region \(g>0\). This
contradicts \(\liminf_{x\to L^-}g(x)=0\).

Hence
 we obtain $L=+\infty$ and $g(x)>0$ on $(0,+\infty)$. We now derive the far-field limits. Recall the estimate \eqref{eq:global_extension_subcritical_decay_phi} we have
\[
\begin{aligned}
\phi(x)
\leq
\phi(x_0)
\left(\frac{x}{x_0}\right)^{1-\frac{g(0)}{g(d)}} \to 0,\qquad \text{ as } x\to +\infty.
\end{aligned}
\] Using that \(g\) is bounded on \((x_0,+\infty)\), the estimate \eqref{eq:global_extension_subcritical_decay_integral} and  the representation formula \eqref{eq:v_representation2} we have
\[
    \lim_{x\to+\infty}v(x)=0 .
\]
The equation \eqref{eq:global_extension_subcritical_decay_g}
and the estimate \eqref{eq:global_extension_subcritical_decay_m_over_g} gives 
\[
\lim_{x\to+\infty}m(x)=0,\qquad
\lim_{x\to+\infty}v(x)=0,\qquad
\lim_{x\to+\infty}g(x)=\frac1a-\frac32 .
\]

This completes the proof in both cases.
\end{proof}

\subsection{Regularity of the solution}
After the extension step, we upgrade the regularity of the constructed profiles. The argument is the same as in the periodic case: near the origin, the fixed-point formulation removes the apparent singularity, while away from the origin the equations form a smooth ODE system as long as \(g>0\).

\begin{proposition}
\label{proposition: regularity_solution2}
Let \(a\in(2/3,1]\), and let \((m,v,g)\in C^1((-L,L))\) be the solution
constructed above for \eqref{eqt:gmu2_for_extension}. Then
\[
    m,\ v,\ g\in C^\infty((-L,L)).
\]
\end{proposition}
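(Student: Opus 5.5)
We mirror the proof of Proposition~\ref{proposition: regularity_solution}. On \([-d,d]\) we use the fixed-point identity \(v=\frR_a(v)\) to bootstrap regularity. On \((d/2,L)\) we use that \eqref{eqt:gmu2_for_extension} is a smooth ODE system. Evenness of \(v\) then gives the result on \((-L,L)\).

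\emph{Step 1 (near the origin).} Since \(v\in\bbD_2\), we have \(1-\mu x^2\le v\le 1\), so \(f:=(1-v)/x^2\) extends continuously to \([-d,d]\). We assume \(f\in C^k\) and show \(f\in C^{k+1}\). First set \(F:=\int_0^x f\). Integration by parts then gives
\[
\frac1x\int_0^x v\,\mathrm dy = 1-xF(x)+2x\int_0^1 tF(tx)\,\mathrm dt,
\]
so \(\widetilde g=\frG(v)\in C^{k+1}\). Moreover \((\widetilde g-1)/x\in C^{k+1}\), since \(\widetilde g-1\) is an affine image of the same expression. By Lemma~\ref{lemma: g_properties2}, \(\widetilde g\ge 1-\frac{2a}{3(2-a)}\mu d^2>0\) on \([0,d]\), so \((\widetilde g-1)/(x\widetilde g)\in C^{k+1}\). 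Hence \(\phi=\Psi(v)\in C^{k+2}\), and \(\phi\) is bounded below by Lemma~\ref{lemma: phi_properties2}. Next write
\[
B(x):=x^{-\frac{4-3a}{2-a}}\int_0^x y^{\frac{2-2a}{2-a}}A(y)\,\mathrm dy=\int_0^1 t^{\frac{2-2a}{2-a}}A(tx)\,\mathrm dt,
\]
where \(A=\widetilde g^{\frac{4-3a}{2a}}\phi^{\frac{7a^2-14a+8}{2a(2-a)}}\in C^{k+1}\). The weight is integrable because \(\frac{2-2a}{2-a}\ge 0\), so \(B\in C^{k+1}\). The exponents satisfy \(\frac{a}{2-a}+\frac{4-3a}{2-a}=2\), so the fixed-point equation becomes
\[
f=\frac{4-3a}{2-a}\,\widetilde g^{1/2}\phi^{\frac{2-3a}{4-2a}}B\in C^{k+1}.
\]
This closes the induction, so \(v\in C^\infty([-d,d])\). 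Then \(g=g(0)\widetilde g\in C^\infty\). Finally, \(m\) is given by \eqref{eq:m_representation2} as a smooth function of \(\widetilde g\) and \(\phi\), so \(m\in C^\infty\) as well.

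\emph{Step 2 (away from the origin).} On \((d/2,L)\) we have \(x>d/2\), and \(g>0\) by Proposition~\ref{proposition: global_extension_supercritical}, or by the construction when \(a=1\). There the right-hand side of \eqref{eqt:gmu2_for_extension} is a \(C^\infty\) function of \((x,m,v,g)\). Any \(C^1\) solution of such a system is \(C^\infty\), by repeatedly differentiating the equation. The two regions overlap and the solution is even, which gives \(m,v,g\in C^\infty((-L,L))\).

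\emph{Main obstacle.} The delicate point is Step 1. We must verify that every singular weight is integrable and that the powers of \(x\) combine exactly to \(x^2\), so that the bootstrap gains one derivative. This must hold uniformly for \(a\in(2/3,1]\), including the endpoint \(a=1\), where the weight exponent is \(0\). We must also keep \(\widetilde g\) and \(\phi\) bounded away from zero on \([-d,d]\), so that their fractional powers remain smooth. Beyond this, the argument is a direct transcription of Proposition~\ref{proposition: regularity_solution}.
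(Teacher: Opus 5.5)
Your proposal is correct and is the argument the paper intends. The paper's proof only says to repeat the bootstrap of Proposition~\ref{proposition: regularity_solution} (the fixed-point formula near $0$, and smooth ODE regularity where $x>0$ and $g>0$) and omits the details; you supply them, including the exponent check $\frac{a}{2-a}+\frac{4-3a}{2-a}=2$.

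The one step to tighten, which you inherit from the paper, is the base case. The bound $0\le 1-v\le\mu x^2$ only makes $f$ bounded, not continuous at $0$. Continuity does follow from the fixed-point identity
\[
f=\frac{4-3a}{2-a}\,\widetilde g^{1/2}\phi^{\frac{2-3a}{4-2a}}\int_0^1 t^{\frac{2-2a}{2-a}}A(tx)\,\mathrm dt ,
\]
since $\widetilde g$, $\phi$ and $A$ are already continuous on $[-d,d]$.
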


\begin{proof}
The argument follows the same bootstrapping idea as in
Proposition~\ref{proposition: regularity_solution}. On any compact subinterval
of \((-L,L)\), the positivity of \(g\) keeps
\eqref{eqt:gmu2_for_extension} away from its singular set, so the right-hand
side is smooth in \((m,v,g)\). Starting from the \(C^1\) solution, standard
ODE regularity then yields higher regularity successively. We therefore omit
the details.
\end{proof}

For the global profiles in the regime \(0<a\leq2/3\), the same argument gives smoothness on the entire real line.

\begin{proposition}
\label{proposition: regularity_solution_subcritical}
Let \(a \in (0,2/3]\), and let \((m,v,g)\in C^1(\mathbb R)\) be the global
solution constructed above for \eqref{eqt:gmu2_for_extension}. Then
\[
    m,\ v,\ g\in C^\infty(\mathbb R).
\]
\end{proposition}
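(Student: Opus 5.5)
The argument follows Proposition~\ref{proposition: regularity_solution} and has two regions: a neighbourhood of the origin, where the fixed-point formulation is used, and the rest of the line, where the ODE system \eqref{eqt:gmu2_for_extension} is nonsingular. Neither step needs new estimates.

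\emph{Near the origin.} Let $v$ be the fixed point of $\frR_a$ from Theorem~\ref{theorem: existence of fixed point2}. Since $v\in\bbD_2$, the function $f:=(1-v)/x^2$ extends continuously to $[-d,d]$. We show by induction that $f\in C^k$ implies $f\in C^{k+1}$.
\begin{itemize}
\item Write $F=\int_0^x f$. As in Step~1 of Proposition~\ref{proposition: regularity_solution}, integration by parts gives
\[
\frac1x\int_0^x v=1-xF(x)+2x\int_0^1 tF(tx)\,dt .
\]
Hence $\widetilde g=\frG(v)$ and $(\widetilde g-1)/x$ lie in $C^{k+1}$. Here we use that the affine relation between $\widetilde g$ and $x^{-1}\int_0^x v$ has coefficients $\frac{2a}{2-a}$ and $\frac{2-3a}{2-a}$.
\item Since $\widetilde g\ge \tfrac12>0$ on $[-d,d]$ by Lemma~\ref{lemma: g_properties2}, the function $\Psi(v)$ is $C^{k+2}$.
\item Substituting $y=tx$, the inner integral becomes
\[
x^{\frac{4-3a}{2-a}}\int_0^1 t^{\frac{2-2a}{2-a}}A(tx)\,dt,\qquad A=\widetilde g^{\frac{4-3a}{2a}}\Psi^{\frac{7a^2-14a+8}{2a(2-a)}}\in C^{k+1}.
\]
The weight $t^{\frac{2-2a}{2-a}}$ is integrable for every $a\in(0,2/3]$. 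Combined with the prefactor $x^{\frac{a}{2-a}}$, the total power is exactly $x^2$.
\item Therefore $f=\frac{4-3a}{2-a}\widetilde g^{1/2}\Psi^{\frac{2-3a}{4-2a}}\int_0^1 t^{\frac{2-2a}{2-a}}A(tx)\,dt$ lies in $C^{k+1}$.
\end{itemize}
The powers of $\widetilde g$ and $\Psi$ are smooth functions of positive quantities, since $\Psi\ge \sqrt{1-\frac{2a}{3(2-a)}}>0$ by Lemma~\ref{lemma: phi_properties2}. This closes the induction, so $v\in C^\infty([-d,d])$. Then $g=g(0)\widetilde g$ is smooth, and $m$ is smooth by \eqref{eq:m_representation2}.

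\emph{Away from the origin.} By Proposition~\ref{proposition: global_extension_subcritical}, $g>0$ on all of $(0,\infty)$. On any compact $[d/2,R]$, the quantities $x$ and $g$ are bounded below by positive constants. The right-hand side of \eqref{eqt:gmu2_for_extension} is then a smooth function of $(x,m,v,g)$ on a neighbourhood of the solution curve. A $C^1$ solution is therefore $C^\infty$ by the usual bootstrap: if $(m,v,g)\in C^k$ then the right-hand side is in $C^k$, so $(m,v,g)\in C^{k+1}$. Taking $R\to\infty$ gives smoothness on $(d/2,\infty)$. For $a=2/3$, this step and the previous one follow directly from Proposition~\ref{proposition: regularity_solution} together with Corollary~\ref{corollary: global_extension} and Proposition~\ref{proposition: maximal_interval}.

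\emph{Whole line.} The fixed point is even, so the maximal extension is symmetric: $m,v,g$ are even. The two regions overlap on $(d/2,d)$, and by ODE uniqueness the solutions agree there, so we obtain $C^\infty(\mathbb R)$.

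The main point requiring care is the first step for small $a$. We must check that no exponent becomes singular. The relevant exponents are $\frac{4-3a}{2a}$ and $\frac{7a^2-14a+8}{2a(2-a)}$, which grow as $a\to0$; this is harmless because $\widetilde g$ and $\Psi$ stay in compact subintervals of $(0,1]$. We also need $\widetilde g>0$ uniformly on $[-d,d]$ with $d=1$; this holds because $1-\frac{2a}{3(2-a)}\ge \frac23$ for $a\le 2/3$.
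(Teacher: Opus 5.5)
Your proposal is correct and follows the same route as the paper. The paper's proof simply defers to the two-step argument of Proposition~\ref{proposition: regularity_solution}: a bootstrap on $f=(1-v)/x^2$ through the fixed-point formula for $\frR_a$ near the origin, and ODE regularity for \eqref{eqt:gmu2_for_extension} where $x$ and $g$ stay positive. You have actually written out the origin step that the paper omits.

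One small point: $v\in\bbD_2$ only gives $0\le f\le\mu$, not continuity at $0$. The base case should instead be read off from your representation $f=\frac{4-3a}{2-a}\widetilde g^{1/2}\Psi^{\frac{2-3a}{4-2a}}\int_0^1 t^{\frac{2-2a}{2-a}}A(tx)\,dt$, whose right-hand side is continuous and equals $1$ at $x=0$.
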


\begin{proof}
The proof is a straightforward adaptation of the argument for
Proposition~\ref{proposition: regularity_solution}. Since \(g>0\) on
\(\mathbb R\), the system \eqref{eqt:gmu2_for_extension} is locally a smooth
ODE system for \((m,v,g)\). Starting from the \(C^1\) solution, standard ODE
regularity yields higher regularity successively. We omit the details.
\end{proof}

For \(a>2/3\), the solution reaches a finite endpoint \(L\). To understand the regularity of the compactly supported extension of \(\omega\) and \(\theta\), we need precise asymptotics as \(x\to L^-\). The following proposition provides these endpoint expansions.

\begin{proposition}
\label{proposition: boundary_asymptotics2}
Let \(a\in(2/3,1]\), and let \((m,v,g)\in C^\infty((-L,L))\) be the finite-endpoint
solution constructed above for \eqref{eqt:gmu2_for_extension}. Then, as
\(x\to L^-\), the following asymptotic expansions hold.

\begin{enumerate}
\item The function \(g\) satisfies
\[
    g(x)=\frac{3a-2}{2aL}(L-x)+o(L-x).
\]

\item There exists a constant \(c_m>0\) such that
\[
    m(x)
    =
    c_m\,(L-x)^{\frac{2a}{3a-2}}
    +
    o\!\left((L-x)^{\frac{2a}{3a-2}}\right).
\]

\item There exists a constant \(c_v\in\mathbb R\) such that
\[
    v(x)
    =
    c_v\,(L-x)^{\frac{a}{3a-2}}
    +
    o\!\left((L-x)^{\frac{a}{3a-2}}\right).
\]

\item The same constant \(c_v\) satisfies
\[
    v_x(x)
    =
    -\frac{a}{3a-2}\,c_v\,
    (L-x)^{\frac{2-2a}{3a-2}}
    +
    o\!\left((L-x)^{\frac{2-2a}{3a-2}}\right).
\]
In particular, if \(a\in(2/3,1)\), then
\[
    v_x(x)\to0
    \qquad\text{as }x\to L^-.
\]
\end{enumerate}
\end{proposition}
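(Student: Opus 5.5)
The plan is to follow the proof of Proposition~\ref{proposition: boundary_asymptotics} step by step. The inputs are the system \eqref{eqt:gmu2_for_extension}, the endpoint limits $g,m,v\to0$ from Proposition~\ref{proposition: global_extension_supercritical}, and the representation formula \eqref{eq:v_representation2}. The main structural difference from the periodic case is that $v\to0$ instead of $-\frac{3a-2}{a}$, and that $g$ has the shift $\frac1a-\frac32$.

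\textbf{Step 1: linear vanishing of $g$.} Since $g(x)\to0$ and $L<\infty$, apply l'Hospital's rule using $xg_x=v-g+\frac1a-\frac32$:
\[
\frac{g(x)}{L-x}\to -\frac{0-0+\frac1a-\frac32}{L}=\frac{3a-2}{2aL}.
\]
This gives (1). In particular $xg(x)=\frac{3a-2}{2a}(L-x)(1+o(1))$.

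\textbf{Step 2: power law for $\phi$.} Set $\phi=\exp\int_0^x\frac{g-g(0)}{yg}$ with $g(0)=\frac1a-\frac12$. Then
\[
\frac{\phi_x}{\phi}=\frac1x-\frac{g(0)}{xg}\sim -\frac{2a-1}{(3a-2)}\cdot\frac{1}{L-x}\cdot\frac{2a\,g(0)}{2a-1}.
\]
Simplifying, $\frac{g(0)}{xg}\sim \frac{(2-a)/(2a)}{\frac{3a-2}{2a}(L-x)}=\frac{2-a}{(3a-2)(L-x)}$. So the log-exponent of $\phi$ is $\kappa:=\frac{2-a}{3a-2}$.

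To upgrade this to $\phi=c_\phi(L-x)^\kappa(1+o(1))$ with $c_\phi>0$, I need $xg=\frac{3a-2}{2a}(L-x)+O((L-x)^{1+\delta})$. For that, first prove a crude bound $v=O((L-x)^\delta)$. Then $xg=-\int_x^L\bigl(v+\frac1a-\frac32\bigr)$ makes the error in $\frac{1}{xg}$ integrable, and $\log(\phi/(L-x)^\kappa)$ converges.

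\textbf{Step 3: asymptotics of $m$ and $v$.} For (2), use \eqref{eq:m_representation2}: $m\propto g^{2/a}\phi^{2(1-a)/a}$. The exponent is
\[
\frac2a+\frac{2(1-a)(2-a)}{a(3a-2)}=\frac{2(3a-2)+2(1-a)(2-a)}{a(3a-2)}=\frac{2a^2}{a(3a-2)}=\frac{2a}{3a-2},
\]
which matches the claim.

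For (3), use \eqref{eq:v_representation2}. The prefactor behaves like
\[
\sqrt{g\phi}\sim (L-x)^{(1+\kappa)/2}=(L-x)^{\frac{a}{3a-2}},
\]
which is the claimed exponent. It remains to show the bracket converges to a finite $c_*$. The integrand is
\[
\frac{y\,m}{g^{3/2}\phi^{1/2}}\sim (L-x)^{\frac{2a}{3a-2}-\frac32-\frac{2-a}{2(3a-2)}}.
\]
The exponent equals $\frac{4a-3(3a-2)-(2-a)}{2(3a-2)}=\frac{4-4a}{2(3a-2)}=\frac{2-2a}{3a-2}\ge0$, so the integrand is bounded and the integral converges. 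Hence $v=c_v(L-x)^{a/(3a-2)}+o(\cdot)$.

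The same bounded-integrand observation also supplies the crude bound needed in Step 2, and this avoids circularity. At that stage one only knows the log-asymptotics of $\phi$, namely $\log\phi/\log(L-x)\to\kappa$, as in the earlier proof. The exponent computed above is strictly positive for $a<1$, so a small loss $\epsilon$ in the exponents is harmless. For $a=1$ the exponent is $0$, and I would bound the integrand directly using $m\le Cg^2$ and the monotonicity of $\phi$, as in Proposition~\ref{proposition: global_extension_supercritical}.

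\textbf{Step 4: asymptotics of $v_x$.} Write
\[
v_x=\frac{v(v-1)}{2xg}-\frac{2xm}{ag}.
\]
With $v-1\to-1$ and $2xg\sim\frac{3a-2}{a}(L-x)$, the first term is
\[
-\frac{a}{3a-2}c_v(L-x)^{\frac{a}{3a-2}-1}=-\frac{a}{3a-2}c_v(L-x)^{\frac{2-2a}{3a-2}}.
\]
The second term is $O\bigl((L-x)^{\frac{2a}{3a-2}-1}\bigr)=O\bigl((L-x)^{\frac{2-a}{3a-2}}\bigr)$, which is of higher order since $2-a>2-2a$. For $a<1$ the exponent $\frac{2-2a}{3a-2}$ is positive, so $v_x\to0$.

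I expect the main obstacle to be the non-circular bootstrap in Step 2. One must obtain $v=O((L-x)^\delta)$ using only log-level information on $\phi$ and still get uniformity in the endpoint case $a=1$.
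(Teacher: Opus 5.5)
Your proposal is correct and follows the paper's own route: the endpoint limit of $g_x$ for (1), a power law for $\phi$, the representation formulas \eqref{eq:m_representation2} and \eqref{eq:v_representation2} for (2)--(3), and then the asymptotics of $v_x$. It is slightly more careful than the written proof in two places: the paper asserts an $O(1)$ error in $\phi_x/\phi$ without justification, and your crude bound $v=O((L-x)^{\delta})$ is what actually establishes $c_\phi$ (with $m\le Cg^2$ and $\phi$ decreasing, that bound holds with $\delta=\tfrac12$ for every $a\in(2/3,1]$, so no separate $a=1$ case is needed); and you obtain (4) from the ODE rather than by differentiating the expansion.
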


\begin{proof}
We use the same strategy as in
Proposition~\ref{proposition: boundary_asymptotics}, with the endpoint limits
obtained in Proposition~\ref{proposition: global_extension_supercritical}.
Recall that
\[
    g(x)\to0,\qquad m(x)\to0,\qquad v(x)\to0
    \qquad \text{as } x\to L^- .
\]

We first prove the asymptotic behavior of \(g\). From
\[
    xg_x=v-g+\frac1a-\frac32
\]
and the endpoint limits above, we obtain
\[
    \lim_{x\to L^-}g_x(x)
    =
    \frac{1}{L}\left(\frac1a-\frac32\right)
    =
    -\frac{3a-2}{2aL}.
\]
Hence
\[
    g(x)=\frac{3a-2}{2aL}(L-x)+o(L-x).
\]

Next, define
\[
    \phi(x)
    :=
    \exp\left(
        \int_0^x
        \frac{g(y)-g(0)}{y g(y)}\,\mathrm dy
    \right).
\]
Since
\[
    \frac{\phi_x}{\phi}
    =
    \frac{g-g(0)}{xg},
\]
and
\[
    xg(x)
    =
    \frac{3a-2}{2a}(L-x)+o(L-x),
\]
we get
\[
    \frac{\phi_x}{\phi}
    =
    -\frac{g(0)}{xg(x)}+O(1)
    =
    -\frac{2a\,g(0)}{3a-2}\frac1{L-x}
    +O(1).
\]
Using \(g(0)=\frac1a-\frac12=\frac{2-a}{2a}\), this becomes
\[
    \frac{\phi_x}{\phi}
    =
    -\frac{2-a}{3a-2}\frac1{L-x}
    +O(1).
\]
Therefore there exists \(c_\phi>0\) such that
\[
    \phi(x)
    =
    c_\phi (L-x)^{\frac{2-a}{3a-2}}(1+o(1)).
\]

We now prove the asymptotic behavior of \(m\). By the representation formula
\[
    m(x)
    =
    \frac{4-3a}{4}
    \left(\frac{g(x)}{g(0)}\right)^{\frac2a}
    \phi(x)^{\frac{2(1-a)}{a}},
\]
the preceding expansions for \(g\) and \(\phi\) imply that there exists
\(c_m>0\) such that
\[
    m(x)
    =
    c_m
    (L-x)^{
        \frac2a+
        \frac{2(1-a)(2-a)}{a(3a-2)}
    }
    (1+o(1)).
\]

It remains to obtain the expansion for \(v\). By the representation formula
\eqref{eq:v_representation2},
\[
v(x)
=
\sqrt{g(x)\phi(x)}
\left(
\frac{v(d)}{\sqrt{g(d)\phi(d)}}
-
\frac{2}{a}\int_d^x
\frac{y\,m(y)}{g(y)^{3/2}\phi(y)^{1/2}}
\,\mathrm dy
\right).
\]
Using the formulas for \(m\), \(g\), and \(\phi\), the integrand has the form
\[
    \frac{y\,m(y)}{g(y)^{3/2}\phi(y)^{1/2}}
    =
    O\!\left(
        (L-y)^{
        \frac2a-\frac32+
        \left(\frac{2(1-a)}{a}-\frac12\right)
        \frac{2-a}{3a-2}}
    \right).
\]
The exponent is greater than \(-1\) for \(a\in(2/3,1]\). Hence the integral
has a finite limit as \(x\to L^-\). Since
\[
    \sqrt{g(x)\phi(x)}
    =
    C (L-x)^{\frac12+\frac{2-a}{2(3a-2)}}(1+o(1))
    =
    C (L-x)^{\frac{a}{3a-2}}(1+o(1)),
\]
there exists \(c_v\in\mathbb R\) such that
\[
    v(x)
    =
    c_v (L-x)^{\frac{a}{3a-2}}
    +
    o\!\left((L-x)^{\frac{a}{3a-2}}\right).
\]

Finally, differentiating this asymptotic expansion gives
\[
    v_x(x)
    =
    -\frac{a}{3a-2}c_v
    (L-x)^{\frac{2-2a}{3a-2}}
    +
    o\!\left((L-x)^{\frac{2-2a}{3a-2}}\right).
\]
In particular, if \(a\in(2/3,1)\), then
\[
    \frac{2-2a}{3a-2}>0,
\]
and therefore
\[
    v_x(x)\to0
    \qquad \text{as } x\to L^- .
\]
The proof is complete.
\end{proof}

We now translate the extended solution \((m,v,g)\) back to the original profile variables \((\omega,u,\theta)\). In the regime \(a>2/3\), the endpoint asymptotics allow us to extend \(\omega\) and \(\theta\) by zero outside a compact interval. In the regime \(a\leq2/3\), the global extension and far-field limits give a smooth whole-space profile satisfying the Neumann condition at infinity.

\begin{corollary}
\label{corollary: neumann_profile_extension}
Let \(0<a\leq 2/3\), and let \((m,v,g)\) be the solution obtained in
Theorem~\ref{theorem: existence of fixed point2} and
Proposition~\ref{proposition: global_extension_subcritical}. Define, for
\(x\in\mathbb R\),
\[
    u(x):=x\left(g(x)-\frac{c_l}{a}\right),\qquad
    \theta(x):=x^2m(x),\qquad
    \omega(x):=-\frac12u_{xx}(x),
\]
and set
\[
    c_l=1-\frac32a,\qquad
    c_\omega=a-1,\qquad
    c_\theta=-a.
\]
Then \((u,\theta,\omega)\) gives a symmetric whole-space solution of
\eqref{eqt: case2} .
Moreover, \(u\) and \(\omega\) are odd, \(\theta\) is even, and
\[
    u,\theta,\omega\in C^\infty(\mathbb R).
\]

\end{corollary}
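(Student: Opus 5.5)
The plan is to verify the statement in three steps: the algebraic identities, then symmetry, then smoothness.

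\emph{Step 1: algebraic identities.} By Theorem~\ref{theorem: existence of fixed point2} and Proposition~\ref{proposition: global_extension_subcritical}, the triple \((m,v,g)\) is defined on all of \(\mathbb R\), with \(g>0\). It satisfies \eqref{eqt:gmu2} on \([-d,d]\), as the fixed-point formulation \eqref{eqt:gmu2_for_fixedpoint} is equivalent to \eqref{eqt:gmu2}, and it satisfies \eqref{eqt:gmu2_for_extension} for \(|x|>d\). Set \(u=x(g-c_l/a)\) and \(\theta=x^2m\). Then \(c_lx+au=axg\), and the third line of \eqref{eqt:gmu2} gives \(u_x=(xg)_x-c_l/a=v\), because \(c_l/a=1/a-3/2\).

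Substituting \(\theta_x=2xm+x^2m_x\) into the first line of \eqref{eqt:gmu2} and using \(g(0)=1/a-1/2\), we compute
\[
(c_lx+au)\theta_x=x^2\bigl(2axg\,m/x+2xg_x m+2(1-a)(g-g(0))m\bigr).
\]
This should reduce to \((-a+2v)\theta\) after substituting \(xg_x=v-g+c_l/a\). We will check this directly. The second line of \eqref{eqt:gmu2} is literally the second line of \eqref{eqt: case2}.

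Next, define \(\omega=-u_{xx}/2\). Differentiating the second equation of \eqref{eqt: case2} recovers the \(\omega\)-equation of \eqref{eqt:main}, with \(c_\omega=a-1\) and \(c_\theta=-a\). This is the integration step at the start of Section~3 read backwards. Finally, \(u(0)=0\) is immediate.

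\emph{Step 2: symmetry.} The fixed point \(v\) lies in \(\mathbb V_2\), so it is even. Hence \(\frG(v)\), \(\Psi(v)\) and \(m\) are even on \([-d,d]\). The system \eqref{eqt:gmu2_for_extension} is invariant under \(x\mapsto -x\) with \((m,v,g)\) unchanged, since every right-hand side carries an odd power of \(x\) (the factor \(x^2m\) is even). By uniqueness of the extension (Proposition~\ref{proposition: local_existence2}), the solution on \((-\infty,-d)\) is therefore the reflection of the solution on \((d,\infty)\). It follows that \(g\) and \(m\) are even on \(\mathbb R\). Consequently \(u=x(g-c_l/a)\) is odd, \(\theta=x^2m\) is even, and \(\omega=-u_{xx}/2\) is odd.

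\emph{Step 3: smoothness.} Proposition~\ref{proposition: regularity_solution_subcritical} gives \(m,v,g\in C^\infty(\mathbb R)\). Hence \(u\) and \(\theta\) are products of smooth functions, so \(u,\theta\in C^\infty(\mathbb R)\), and then \(\omega=-u_{xx}/2\in C^\infty(\mathbb R)\).

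The main obstacle is that Proposition~\ref{proposition: regularity_solution_subcritical} is only justified away from \(x=0\), where \eqref{eqt:gmu2_for_extension} is nonsingular. At the origin I would instead repeat the bootstrap of Proposition~\ref{proposition: regularity_solution}, Step~1. Write \(v=1-x^2f\) with \(f\in C([-d,d])\); this is available because \(1-\mu x^2\le v\le 1\). Rescale the averages as \(\int_0^1 t^{\alpha}F(tx)\,dt\). The needed weight \(t^{(2-2a)/(2-a)}\) is integrable because the exponent is nonnegative for \(0<a\le 1\).

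The prefactor \(x^{a/(2-a)}\) combines with \(x^{(4-3a)/(2-a)}\) from the integral to give exactly \(x^2\). This yields the recursion \(f\in C^k\Rightarrow f\in C^{k+1}\), so \(f\) is smooth near the origin. On \(|x|>d/2\), smoothness comes from the ODE as already stated.
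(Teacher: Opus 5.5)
Your proposal is correct and follows the same route as the paper. You verify the two equations of \eqref{eqt: case2} from \eqref{eqt:gmu2}, read parity off the even construction, and take smoothness from Proposition~\ref{proposition: regularity_solution_subcritical}; your explicit bootstrap at the origin usefully supplies what that proposition's one-line justification omits, since \eqref{eqt:gmu2_for_extension} is singular at $x=0$. The one slip is in the base case: continuity of $f=(1-v)/x^2$ at $0$ does not follow from $1-\mu x^2\leq v\leq 1$, which only gives boundedness, but it does follow from the fixed-point formula after the substitution $y=tx$, which writes $f$ as a product of continuous functions.
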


\begin{proof}
By the definition used in the reduced formulation,
\[
 u=x\left(g-\frac{c_l}{a}\right), \qquad
    \theta=x^2m,\qquad
    \omega=-\frac12u_{xx},
\]
the system \eqref{eqt:gmu2} for \((m,v,g)\), with \(v=u_x\), implies
\[
    (c_l x+au)\theta_x=(c_\theta+2u_x)\theta,
\]
\[
    (c_l x+au)\omega_x=c_\omega\omega+\theta_x,
    \qquad
    -u_{xx}=2\omega .
\]
Hence \((u,\theta,\omega)\) solves \eqref{eqt:main} on \(\mathbb R\).

The parity follows from the construction: \(g,m,v\) are even, hence \(u\) and
\(\omega\) are odd, while \(\theta\) is even. Proposition~\ref{proposition: global_extension_subcritical}
gives the far-field limits
\[
    \lim_{x\to+\infty}m(x)=0,\qquad
    \lim_{x\to+\infty}v(x)=0,\qquad
    \lim_{x\to+\infty}g(x)=\frac1a-\frac32 .
\]
Since \(v=u_x\), this gives
\[
    \lim_{x\to+\infty}u_x(x)=0.
\]
By even symmetry, the same Neumann condition holds as \(x\to-\infty\).

Finally, Proposition~\ref{proposition: regularity_solution_subcritical} gives
\[
    m,v,g\in C^\infty(\mathbb R),
\]
and therefore
\[
    u,\theta,\omega\in C^\infty(\mathbb R).
\]
This completes the
proof.
\end{proof}

\begin{corollary}
\label{corollary: whole_space_compact_profile}
Let \(2/3<a\leq 1\), and let \((m,v,g)\) be the solution obtained in
Theorem~\ref{theorem: existence of fixed point2} and
Corollary~\ref{proposition: global_extension_supercritical}. Define, for \(x\in(-L,L)\),
\[
    u(x):=x\left(g(x)-\frac{c_l}{a}\right),\qquad
    \theta(x):=x^2m(x),\qquad
    \omega(x):=-\frac12u_{xx}(x),
\]
and set
\[
    c_l=1-\frac32a,\qquad
    c_\omega=a-1,\qquad
    c_\theta=-a.
\]
Then \((u,\theta,\omega)\) gives a symmetric whole-space solution of
\eqref{eqt:main} satisfying the Neumann condition
\[
    \lim_{|x|\to+\infty}u_x(x)=0.
\]
Moreover, \(u\) and \(\omega\) are odd, \(\theta\) is even, and
\[
    \omega
    \in C^{\lceil\beta\rceil-1,\beta-\lceil\beta\rceil+1}(\mathbb R)
    \cap C^\infty\bigl(\mathbb R\setminus\{\pm L\}\bigr),
\]
\[
     u
    \in C^{1+\lceil\beta\rceil,\beta-\lceil\beta\rceil+1}(\mathbb R)
    \cap C^\infty\bigl(\mathbb R\setminus\{\pm L\}\bigr),
\]
\[
    \theta
    \in C^{\lceil\gamma\rceil-1,\gamma-\lceil\gamma\rceil+1}(\mathbb R)
    \cap C^\infty\bigl(\mathbb R\setminus\{\pm L\}\bigr).
\]
\[
    \beta:=\frac{1-a}{3a-2},
    \qquad
    \gamma:=\frac2a+\frac{2(1-a)}{3a-2}.
\]
If \(a=1\), we have
\[
\omega\in C^{0,1}(\mathbb R)\cap C^\infty\bigl(\mathbb R\setminus\{\pm L\}\bigr),
\]
\[
\theta\in C^{1,1}(\mathbb R)\cap C^\infty\bigl(\mathbb R\setminus\{\pm L\}\bigr),
\]
\[
    u\in C^{2,1}(\mathbb R)\cap C^\infty\bigl(\mathbb R\setminus\{\pm L\}\bigr).
\]
\end{corollary}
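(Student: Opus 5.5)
The plan follows the proof of Corollary~\ref{corollary: periodic_profile_extension}, with zero extension replacing periodic reflection. First, on \((-L,L)\) the functions \(u=x(g-c_l/a)\), \(\theta=x^2m\), \(\omega=-\tfrac12u_{xx}\) solve \eqref{eqt:main}. This is the same algebra as in Corollary~\ref{corollary: neumann_profile_extension}: the system \eqref{eqt:gmu2}, with \(v=u_x\), reproduces the two transport equations and \(-u_{xx}=2\omega\). Parity follows since \(m,v,g\) are even. Interior smoothness on \((-L,L)\) is Proposition~\ref{proposition: regularity_solution2}.

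The second step is to extend outside \([-L,L]\). Set \(\omega=\theta=0\) for \(|x|\ge L\). By Proposition~\ref{proposition: global_extension_supercritical}, \(v(x)\to0\) as \(x\to L^-\), so we take \(u\) to be constant, equal to \(u(L^-)=L(0-c_l/a)=-Lc_l/a\), for \(x\ge L\), and odd. Then \(u_x\equiv0\) outside, which gives the Neumann condition trivially. One checks that \eqref{eqt:main} holds for \(|x|>L\): \(\theta=\omega=0\) and \(u_{xx}=0\). The characteristic speed \(c_lx+au\) equals \(a x g=0\) at \(x=L\), so the transport equations impose no jump condition across \(\pm L\). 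Here one only needs continuity of \(\theta,\theta_x,\omega\) at \(\pm L\), which follows from the asymptotics below.

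The third step, and the main one, is the Hölder regularity across \(\pm L\), using Proposition~\ref{proposition: boundary_asymptotics2}. There, \(v=u_x=c_v(L-x)^{a/(3a-2)}+\dots\) and \(v_x=-\tfrac{a}{3a-2}c_v(L-x)^{(2-2a)/(3a-2)}+\dots\). Hence \(\omega=-\tfrac12v_x\) behaves like \((L-x)^{\beta'}\) with \(\beta'=\tfrac{2-2a}{3a-2}\), and \(\theta=x^2m\) behaves like \((L-x)^{\gamma'}\) with \(\gamma'=\tfrac{2a}{3a-2}\). To reach the stated exponents \(\beta\) and \(\gamma\), I would first reconcile these with the definitions in the statement. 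They may differ by a normalization; if so, I would state the regularity with the exponents that the asymptotics actually give. The asymptotics alone control only the leading term. To get \(C^{k,\alpha}\) for a function vanishing to order \(\beta\) at \(L\), I would prove the derivative expansions \(\partial^j\omega=O((L-x)^{\beta-j})\) for \(j\le\lceil\beta\rceil\). I would do this by induction, differentiating the ODE system \eqref{eqt:gmu2_for_extension} near \(L\), where \(xg\sim\tfrac{3a-2}{2a}(L-x)\). Each derivative loses exactly one power of \(L-x\), which makes the system a regular-singular (Fuchsian) one near \(L\). Then a one-sided function with \(|\partial^j f|\lesssim (L-x)^{\beta-j}\), glued to zero, lies in \(C^{\lceil\beta\rceil-1,\beta-\lceil\beta\rceil+1}\) by the standard mean-value estimate. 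The same applies to \(\theta\) with \(\gamma\), and \(u\) gains two derivatives from \(-u_{xx}=2\omega\).

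The main obstacle is this inductive derivative control. I would handle it by writing \(v=(L-x)^{\beta+1}w\), \(g=(L-x)h\), \(m=(L-x)^{\gamma}n\) and showing that \(w,h,n\) extend as \(C^\infty\) functions of a suitable variable \(s=(L-x)^{1/p}\) (or as smooth functions plus power series). The goal is to show \((L-x)\partial_x\) preserves the bounds; this holds since the reduced system has the form \((L-x)\partial_x Y=F(Y,L-x)\) with smooth \(F\). For \(a=1\) the exponents are integers (\(\beta'=0\), \(\gamma'=2\)). There \(\omega\) has a jump in derivative only, so the bounds give \(\omega\in C^{0,1}\), \(\theta\in C^{1,1}\) and \(u\in C^{2,1}\), as claimed.
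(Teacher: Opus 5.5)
\textbf{Gap at $a=1$.} Your argument for $a=1$ is incomplete. There $\beta'=0$, and Proposition~\ref{proposition: boundary_asymptotics2}(4) only gives $v_x\to-c_v$, i.e.\ $\omega(L^-)=c_v/2$ with $c_v\in\mathbb R$ undetermined. Your own bounds then give $\omega$ bounded at $L$, not vanishing there. So the claim that $\omega$ has a jump in derivative only is unproved. If $c_v\neq0$, the zero extension of $\omega$ jumps at $\pm L$, and $u$, being constant outside, is not even $C^2$; both $\omega\in C^{0,1}$ and $u\in C^{2,1}$ then fail, as does the continuity of $\omega$ you rely on in your second step.

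You need an extra input forcing $c_v=0$ and $\omega=O(L-x)$. The paper supplies it with the explicit profile $u=\frac x2+\frac14\sin 2x$, $\omega=\frac12\sin 2x$, $\theta=\frac1{16}\sin^2 2x$ on $[-\frac\pi2,\frac\pi2]$, combined with the uniqueness result, Proposition~\ref{proposition: uniqueness2}.

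You can also close the gap inside your own framework:
\begin{itemize}
\item From \eqref{eqt:gmu2_for_extension}, $(\log(m/g^2))_x=\frac{2(1-a)(v-1)}{axg}$, which vanishes identically at $a=1$.
\item Hence $m/g^2\equiv m(0)/g(0)^2=1$, i.e.\ $\theta=(xg)^2$.
\item At $a=1$ one has $c_lx+au=xg$ and $c_\omega=0$, so the $\omega$-equation reads $xg\,\omega_x=\theta_x=2xg\,(xg)_x$. This gives $\omega=2xg$ on $[0,L)$.
\item This vanishes linearly at $L$, with $\omega_x(L^-)=2v(L^-)-1=-1$. That yields exactly $\omega\in C^{0,1}$ and $u\in C^{2,1}$.
\end{itemize}
Note also that Proposition~\ref{proposition: global_extension_supercritical}, which you cite for $v\to0$, is stated only for $a<1$. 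At $a=1$ you must check that its proof still applies, or use the explicit profile.

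\textbf{The case $2/3<a<1$.} Here your route is sound, and on one point it is more careful than the paper. The paper simply cites the leading-order expansions of Proposition~\ref{proposition: boundary_asymptotics2}, which alone do not give $C^{k,\alpha}$ with $k\ge1$. Set $s=L-x$, $p=\frac{a}{3a-2}$, and write $g=sh$, $v=s^{p}w$, $m=s^{2p}n$. Then $s\partial_s(h,w,n)=F(h,w,n,s,s^{p})$ with $F$ smooth near the limit point, where $h\to\frac{3a-2}{2aL}>0$. So every $(s\partial_s)^j(h,w,n)$ is bounded, as you intend. Two corrections:
\begin{itemize}
\item Your ansatz must use your own exponents, $\beta'+1=p$ and $\gamma'=2p$, not the statement's $\beta,\gamma$, which are the periodic ones.
\item Nothing needs reconciling. $\beta'=2\beta\ge\beta$ and $\gamma'-\gamma=\frac{4(1-a)^2}{a(3a-2)}\ge0$, and $\omega,\theta,u_x$ vanish outside $[-L,L]$. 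So the stated, weaker H\"older classes follow from yours by embedding. Prove the stronger statement and deduce the stated one rather than replacing it.
\end{itemize}
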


\begin{proof}
On \((-L,L)\), the definitions
\[
    u=x\left(g-\frac{c_l}{a}\right),\qquad
    \theta=x^2m,\qquad
    \omega=-\frac12u_{xx}
\]
together with the system for \((m,v,g)\) imply
\[
    (c_lx+au)\theta_x=(c_\theta+2u_x)\theta,
\]
\[
    (c_lx+au)\omega_x=c_\omega\omega+\theta_x,
    \qquad
    -u_{xx}=2\omega .
\]
Thus \((u,\theta,\omega)\) solves \eqref{eqt:main} on \((-L,L)\).

The parity follows from the construction: \(g,m,v\) are even, hence \(u\) and
\(\omega\) are odd, while \(\theta\) is even. By the endpoint behavior in
Proposition~\ref{proposition: boundary_asymptotics2}, we have
\[
    \theta(x)\to0,\qquad \omega(x)\to0,
    \qquad x\to L^-.
\]
The same holds at \(x=-L\) by symmetry. Hence we may extend
\(\theta\) and \(\omega\) to the whole real line by
\[
    \theta(x)=0,\qquad \omega(x)=0,
    \qquad |x|\geq L.
\]
Thus
\[
    \supp \omega\cup\supp \theta\subset[-L,L].
\]
The  Neumann condition
\[
    \lim_{|x|\to+\infty}u_x(x)=\lim_{|x|\to L^{-}}u_x(x)=\lim_{|x|\to L^{-}}v(x)=0,
\]
is also satisfied. 

The smoothness in the interior follows from
Proposition~\ref{proposition: regularity_solution2}, and the regularity across
the endpoints follows from Proposition~\ref{proposition: boundary_asymptotics2}.
This completes the proof. When \(a=1\), we can construct an explicit compactly supported whole-space
profile satisfying the same normalization conditions. On \([-L,L]\), take
\[
    u(x)=\frac{x}{2}+\frac14\sin(2x),
    \qquad
    \omega(x)=\frac12\sin(2x),
    \qquad
    \theta(x)=\frac1{16}\sin^2(2x),
\]
where $L=\pi/2$, $c_l=-1/2$, $c_\omega=0$, $c_\theta=-1$.
We extend \(\omega\) and \(\theta\) by zero for \(x\geq L\), and extend \(u\)
by the constant value \(u(L)=\pi/4\). By odd/even symmetry, this defines the
whole-space profile on \(\mathbb R\).
Therefore the compactly supported extension satisfies
\[
    \omega\in C^{0,1}(\mathbb R),\qquad
    \theta\in C^{1,1}(\mathbb R),\qquad
    u\in C^{2,1}(\mathbb R).
\]
In the next subsection, we prove that this is the unique whole-space profile satisfying some normalization conditions. This completes the
proof.
\end{proof}

\subsection{Uniqueness of the solution}
We finally prove the uniqueness of the normalized whole-space profile. As in the periodic case, uniqueness near the origin follows from the singular integral formulation and the prescribed leading behavior, while uniqueness away from the origin follows from the standard uniqueness theorem for the ODE system.

\begin{proposition}
\label{proposition: uniqueness2}
Let \(a\in[0,1]\). Let \((m_1,v_1,g_1)\) and \((m_2,v_2,g_2)\) be two
solutions of \eqref{eqt:gmu2} or \eqref{eqt:gmu2_a0} on \((-L,L)\), where
\(v_i:=u_{i,x}\). Assume that they satisfy the same initial data
\[
    2m_i(0)=2-\frac32a,
    \qquad
    v_i(0)=1,
\]
and the regularity conditions
\[
    v_i(x)-1=O(x^2),
    \qquad
    g_i(x)-g_i(0)=O(x^2)
    \qquad \text{as } x\to0 .
\]
Then
\[
    m_1\equiv m_2,
    \qquad
    v_1\equiv v_2,
    \qquad
    g_1\equiv g_2
    \qquad \text{on } (-L,L).
\]
\end{proposition}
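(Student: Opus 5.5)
We will follow the proof of Proposition~\ref{proposition: uniqueness}: first a Gronwall-type argument near the singular point $x=0$, then continuation by classical ODE uniqueness. For $0<a\leq1$ the two solutions share $g_i(0)=\frac1a-\frac12$, $2m_i(0)=2-\frac32a$ and $v_i(0)=1$. On $(0,\eps]$ we write the equations of \eqref{eqt:gmu2} in centered form,
\[
axg_i\,m_{i,x}=\bigl(2(v_i-1)-2a(g_i-g_i(0))\bigr)m_i,\qquad
2axg_i\,(v_i-1)_x=a v_i(v_i-1)-4x^2m_i,\qquad
(xg_i)_x=v_i+\tfrac1a-\tfrac32 .
\]
For the first identity we use $2xg_x=2(v-1)+2(g(0)-g)$ from the third line. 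We choose $\eps$ so small that $g_i\geq c_0>0$, $|m_i|\leq C$ and $|v_i-1|+|g_i-g_i(0)|\leq Cx^2$ on $[0,\eps]$.

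Set $\delta m=m_1-m_2$, $\delta v=v_1-v_2$, $\delta g=g_1-g_2$. From the third line, $x\,\delta g=\int_0^x\delta v$, so $|\delta g(x)|\leq Cx^2E(x)$, where
\[
E(x):=\sup_{0<s\leq x}\Bigl(|\delta m(s)|+\frac{|\delta v(s)|}{s^2}\Bigr).
\]
For $\delta m$ we divide by $axg_i$ and subtract the two $m$-equations. The coefficients $(v_i-1)/x$ and $(g_i-g_i(0))/x$ are $O(x)$, and their differences are $O(xE)$. Integrating from $0$, where $\delta m(0)=0$, gives $|\delta m(x)|\leq C\int_0^xE$.

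The $v$-equation is the delicate step, since its linear part is singular. It reads $(v-1)_x=\frac{v}{2xg}(v-1)-\frac{2xm}{ag}$, and the coefficient $\frac{v}{2xg}\sim\frac{1}{2g(0)x}=\frac{a}{(2-a)x}$ is not integrable at $0$. We handle it with the integrating factor already used in the fixed-point formula: $w_i:=(v_i-1)\,x^{-a/(2-a)}(g_i\phi_i)^{-1/2}$, up to the factor making the exponent exact. Then $w_i$ satisfies $w_{i,x}=-\frac{2x m_i}{a g_i}x^{-a/(2-a)}(\cdots)$ plus a remainder of size $O(x^{1-a/(2-a)})$ coming from $\frac{v_i-1}{2xg_i}(v_i-1)$. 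All coefficients now have integrable singularities, because $x^{1-a/(2-a)}$ is integrable when $a\leq1$. Writing $\delta v$ through $w_1-w_2$, and using $\phi_1/\phi_2=1+O(x^2E)$ (the integrand of $\log\phi$ differs by $O(xE)$), we expect $|\delta v(x)|/x^2\leq C\int_0^xE$. The main obstacle is to check the powers of $x$ here: the $x^{a/(2-a)}$ weight must be recovered when dividing by $x^2$, and this works because the source term is $O(x)$ and $\int_0^x y^{1-a/(2-a)}\,dy\sim x^{2-a/(2-a)}$. Combining the three bounds gives $E(x)\leq C\int_0^xE$, and Gronwall's inequality gives $E\equiv0$ on $[0,\eps]$. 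The same argument applies on $[-\eps,0]$.

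For $a=0$ we use system \eqref{eqt:gmu2_a0} with $m(0)=1$ and $v(0)=1$. Here the argument is simpler: the second equation is regular, so $\delta v(x)=-2\int_0^x y\,\delta m$. The first equation is $m_x=\frac{2(v-1)}{x}m$ with $\frac{v_i-1}{x}=O(x)$, so $|\delta m(x)|\leq C\int_0^x\bigl(|\delta m|+|\delta v|/y\bigr)$, and Gronwall closes. Away from the origin we argue as in Proposition~\ref{proposition: uniqueness}. On $[\eps,L_0]$ with $L_0<L$, $x\geq\eps$ and $g_i$ is bounded below (for $a=0$ no denominator other than $x$ appears). Hence \eqref{eqt:gmu2_for_extension} is locally Lipschitz there, the solutions agree at $x=\eps$, and Picard--Lindel\"of uniqueness gives agreement on $[\eps,L_0]$. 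Letting $L_0\uparrow L$, and arguing symmetrically on the negative side, completes the argument.
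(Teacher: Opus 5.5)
Your proposal is correct and follows essentially the paper's route. The paper uses a Gronwall argument for $E(x)=\sup_{0<s\le x}\bigl(|\delta m(s)|+|\delta v(s)|/s^2\bigr)$ near the origin, exactly as in the periodic uniqueness proof, followed by Picard--Lindel\"of uniqueness on $[\eps,L_0]$. Your integrating-factor treatment of the non-integrable coefficient $\frac{a}{(2-a)x}$ and your separate $a=0$ case fill in details the paper omits.

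One point should be stated explicitly. The boundary term at $x=0$ vanishes because $v_i-1=O(x^2)$, while the homogeneous mode behaves only like $x^{a/(2-a)}$ with $a/(2-a)\le 1<2$. This is exactly where the regularity hypothesis excludes the free homogeneous mode.
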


\begin{proof}
The proof is a straightforward adaptation of the uniqueness argument for
Proposition~\ref{proposition: uniqueness}. The regularity assumptions remove
the apparent singularity at \(x=0\), and away from the origin the system
\eqref{eqt:gmu2_for_extension} is a locally Lipschitz first-order ODE system
as long as \(g>0\). Applying the same comparison argument on the two sides of
the origin gives the desired uniqueness. We omit the details.
\end{proof}

This completes the construction and analysis of the whole-space profiles satisfying the Neumann condition at infinity. These profiles will be used in the next section to construct exact self-similar finite-time blowup solutions of the evolution equation \eqref{eqt:gHL1}.

\section{Construction of finite-time blowup}
We now explain how the self-similar profiles constructed in the previous sections generate finite-time blowup solutions of the evolution equation \eqref{eqt:gHL1}. We treat the periodic and whole-space settings separately. In the periodic case, the profile has a fixed self-similar length scale, while in the whole-space case the sign of the self-similar scaling parameter determines whether the blowup is focusing, neither focusing nor expanding, or expanding.

\subsection{Periodic Setting}
We first deal with the periodic setting. We consider \eqref{eqt:gHL1} on the periodic
domain
    $\mathbb S^1:=\mathbb R/(2\pi\mathbb Z)$,
\begin{equation}\label{eqt:gHL1_periodic}
\begin{aligned}
\theta_t+a u\theta_x &= 2u_x\theta,\\
\omega_t+a u\omega_x &= \theta_x,\\
-u_{xx} = 2\omega,&\qquad u(0,t)=0.
\end{aligned}
\end{equation}
Here \(\omega\) and \(\theta\) are \(2\pi\)-periodic, while \(u\) is determined
from \(-u_{xx}=2\omega\) by
solving the periodic Poisson equation.

The periodic profiles constructed in Section~2 have period \(2L\). We first rescale them to period \(2\pi\), and then use the corresponding self-similar time exponents to obtain an exact solution of the periodic evolution equation.

\begin{theorem}
\label{theorem: periodic_exact_blowup}
Let \(2/3<a<1\). Then equation \eqref{eqt:gHL1_periodic} on
\(\mathbb S^1\) admits a nontrivial self-similar solution which blows up in
finite time.
\end{theorem}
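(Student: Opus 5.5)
The plan is to take the normalized periodic profile from Corollary~\ref{corollary: periodic_profile_extension}, rescale it to period \(2\pi\) and to \(c_\theta=-2\) using the scaling invariance \eqref{eq:profile_scaling_invariance}, and check directly that the separated-variable ansatz solves \eqref{eqt:gHL1_periodic}.

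\textbf{Step 1: rescaling the profile.} For \(2/3<a<1\), Corollary~\ref{corollary: periodic_profile_extension} gives a symmetric \(2L\)-periodic solution \((u,\theta,\omega)\) of \eqref{eqt: case1}. Its parameters are \(c_l=0\), \(c_\omega=a-1\), \(c_\theta=2(a-1)\), with \(u,\omega\) odd and \(\theta\) even. I apply \eqref{eq:profile_scaling_invariance} with
\[
\lambda_2=L/\pi,\qquad \lambda_1=\lambda_2/(1-a)>0 .
\]
The rescaled triple \((\widetilde\Omega,\widetilde U,\widetilde\Theta)(X)=\bigl(\lambda_1\omega(\lambda_2X),\,\lambda_1\lambda_2^{-2}u(\lambda_2X),\,\lambda_1^2\lambda_2^{-2}\theta(\lambda_2X)\bigr)\) is then \(2\pi\)-periodic. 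Since \(c_\theta\) and \(c_\omega\) scale by \(\lambda_1/\lambda_2=1/(1-a)\), the new parameters are
\[
\widetilde c_l=0,\qquad \widetilde c_\omega=-1,\qquad \widetilde c_\theta=-2 .
\]
Because \(\lambda_1,\lambda_2>0\), parity and the signs \(\widetilde U_X(0)>0\), \(\widetilde\Omega_X(0)>0\), \(\widetilde\Theta_{XX}(0)=\lambda_1^2>0\) are preserved. The Hölder regularity of Corollary~\ref{corollary: periodic_profile_extension} is preserved as well.

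\textbf{Step 2: verifying the PDE.} For \(T>0\) set
\[
\omega=(T-t)^{-1}\widetilde\Omega(x),\qquad u=(T-t)^{-1}\widetilde U(x),\qquad \theta=(T-t)^{-2}\widetilde\Theta(x).
\]
Substituting gives
\[
\theta_t+au\theta_x-2u_x\theta=(T-t)^{-3}\bigl[2\widetilde\Theta+a\widetilde U\widetilde\Theta_x-2\widetilde U_x\widetilde\Theta\bigr],
\]
which vanishes by the first profile equation with \(\widetilde c_\theta=-2\). Similarly,
\[
\omega_t+au\omega_x-\theta_x=(T-t)^{-2}\bigl[\widetilde\Omega+a\widetilde U\widetilde\Omega_x-\widetilde\Theta_x\bigr],
\]
which vanishes by the second profile equation with \(\widetilde c_\omega=-1\).

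For the elliptic part, \(-\widetilde U_{XX}=2\widetilde\Omega\) holds with \(\widetilde U(0)=0\). Since \(\widetilde\Omega\) is odd, it has zero mean over a period, so the periodic Poisson problem is solvable. Its solution is unique up to an additive constant, which is fixed by \(u(0,t)=0\). Hence the \(u\) produced by the periodic Poisson solve coincides with \((T-t)^{-1}\widetilde U\).

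\textbf{Step 3: the junction points (main obstacle).} The delicate point is the sense in which the equations hold at the junctions \(x=(2k+1)\pi\), where the profile is only Hölder. Away from these points everything is \(C^\infty\) and the identities hold classically. At \(X=\pi\), which corresponds to \(x=L\) before rescaling, I would use Proposition~\ref{proposition: boundary_asymptotics}:
\[
\widetilde U\sim C(\pi-X),\qquad \widetilde\Omega_X=O\bigl((\pi-X)^{\beta-1}\bigr),\qquad \widetilde\Theta_X=O\bigl((\pi-X)^{\gamma-1}\bigr),\qquad \gamma>2 .
\]
Here \(\widetilde\Omega_X\) is controlled through \(\omega=-\tfrac12 u_{xx}\), \(u=xg\) and item (4) for \(v_x\). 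Consequently
\[
\widetilde U\widetilde\Omega_X=O\bigl((\pi-X)^{\beta}\bigr)\to0,\qquad \widetilde U\widetilde\Theta_X\to0 .
\]
The limits from both sides match by the odd/even reflection built into the periodic extension. So the transport terms extend continuously by \(0\) across \(X=\pi\), and both equations hold pointwise everywhere: \(\widetilde\Omega(\pi)=\widetilde\Theta(\pi)=\widetilde\Theta_X(\pi)=0\) there. They also hold in the distributional sense, since all the terms involved are locally integrable.

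Should \(\beta<1\) make a pointwise statement at \(X=\pi\) uncomfortable, I would instead verify the weak formulation. I would integrate against test functions on \((-\pi,\pi)\) and on its reflection, and check that the boundary terms \(\widetilde U\widetilde\Omega\) and \(\widetilde U\widetilde\Theta\) vanish at \(\pm\pi\).

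\textbf{Step 4: blowup and nontriviality.} Since \(\widetilde\Omega_X(0)>0\), the profile \(\widetilde\Omega\) is nonzero. Therefore
\[
\|\omega(\cdot,t)\|_{L^\infty}=(T-t)^{-1}\|\widetilde\Omega\|_{L^\infty}\to\infty,\qquad \|\theta(\cdot,t)\|_{L^\infty}\sim(T-t)^{-2},
\]
as \(t\to T\), while the initial data at \(t=0\) are the \(2\pi\)-periodic, Hölder-regular profiles. Because \(\widetilde c_l=0\), the spatial scale is fixed, so the blowup is neither focusing nor expanding.
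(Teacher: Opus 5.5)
Your proposal is correct and is essentially the paper's proof. The paper rescales the $2L$-periodic profile of Corollary~\ref{corollary: periodic_profile_extension} with exactly your constants $\lambda_2=L/\pi$, $\lambda_1=\lambda_2/(1-a)$, which gives $\widetilde c_\omega=-1$ and $\widetilde c_\theta=-2$, and then verifies the equations by direct substitution. Your extra discussion of the periodic Poisson solve and of the junction points $x=(2k+1)\pi$ fills in details the paper leaves implicit; there, $\widetilde U\widetilde\Omega_X\to0$ follows most directly from the profile equation $a\widetilde U\widetilde\Omega_X=-\widetilde\Omega+\widetilde\Theta_X$, since item (4) only controls $\widetilde\Omega$ itself.
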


\begin{proof}
By Corollary~\ref{corollary: periodic_profile_extension}, there exists a
symmetric \(2L\)-periodic profile \((\bar\Omega,\bar U,\bar\Theta)\) solving
\[
    a\bar U\bar\Theta_x=(c_\theta+2\bar U_x)\bar\Theta,
    \qquad
    a\bar U\bar\Omega_x=c_\omega\bar\Omega+\bar\Theta_x,
    \qquad
    -\bar U_{xx}=2\bar\Omega,
\]
with
\[
    c_l=0,\qquad c_\omega=a-1,\qquad c_\theta=2(a-1).
\]
The following rescaling is chosen according to the scaling invariance
\eqref{eq:profile_scaling_invariance}. It rescales the profile from period
\(2L\) to period \(2\pi\), and normalizes the self-similar time exponents to
\[
    \widetilde c_l=0,\qquad
    \widetilde c_\omega=-1,\qquad
    \widetilde c_\theta=-2.
\]
For any \(T>0\), define
\[
    \omega(x,t)
    :=
    \frac{L}{(1-a)\pi(T-t)}
    \bar\Omega\!\left(\frac{Lx}{\pi}\right),
\]
\[
    u(x,t)
    :=
    \frac{\pi}{(1-a)L(T-t)}
    \bar U\!\left(\frac{Lx}{\pi}\right),
\]
\[
    \theta(x,t)
    :=
    \frac{1}{(1-a)^2(T-t)^2}
    \bar\Theta\!\left(\frac{Lx}{\pi}\right).
\]
Since \((\bar\Omega,\bar U,\bar\Theta)\) is \(2L\)-periodic, the constructed functions are
\(2\pi\)-periodic in \(x\). By direct substitution, the profile equations
imply
\[
    \theta_t+a u\theta_x=2u_x\theta,
    \qquad
    \omega_t+a u\omega_x=\theta_x,
    \qquad
    -u_{xx}=2\omega.
\]
Thus \((\omega,u,\theta)\) is an exact solution of
\eqref{eqt:gHL1_periodic}. Since the time-dependent prefactors are singular
at \(t=T\) and the profile is nontrivial, this gives an exact self-similar finite-time blowup solution.
\end{proof}

\subsection{Whole Space Setting}
We now turn to the whole-space setting. In this case, the profiles constructed in Section~3 satisfy a Neumann condition at infinity, and the sign of the scaling parameter determines the nature of the blowup.

We consider the evolution equation \eqref{eqt:gHL1} on the whole space $x\in\mathbb R$,
\begin{equation}\label{eqt:gHL1_whole_space}
\begin{aligned}
\theta_t+a u\theta_x &= 2u_x\theta,\\
\omega_t+a u\omega_x &= \theta_x,\\
-u_{xx} = 2\omega,&\qquad u(0,t)=0.
\end{aligned}
\end{equation}
Here \(u\) is determined by solving the Poisson equation on \(\mathbb R\),
together with the normalization \(u(0,t)=0\) and the Neumann condition at
infinity.

\begin{theorem}
\label{theorem: whole_space_exact_blowup}
Let \(0<a\leq1\). Then equation \eqref{eqt:gHL1_whole_space} admits a
nontrivial self-similar finite-time blowup solution.
More precisely, the blowup is focusing for \(0<a<2/3\), neither focusing
nor expanding for \(a=2/3\), and expanding for \(2/3<a\leq1\).
\end{theorem}

\begin{proof}
By Corollaries~\ref{corollary: neumann_profile_extension}
and~\ref{corollary: whole_space_compact_profile}, there exists a
symmetric whole-space profile \((\bar\Omega,\bar U,\bar\Theta)\) solving
\[
    (c_lx+a\bar U)\bar \Theta_x=(c_\theta+2\bar U_x)\bar\Theta,
    \qquad
    (c_lx+a\bar U)\bar\Omega_x=c_\omega\bar\Omega+\bar\Theta_x,
    \qquad
    -\bar U_{xx}=2\bar \Omega,
\]
\[
\lim_{|x|\to+\infty}\bar U_x(x)=0.
\]
with the parameters
\[
    c_l=1-\frac32a,\qquad c_\omega=a-1,\qquad c_\theta=-a.
\]
For any \(T>0\), set
\[
    \omega(x,t)
    :=
    \frac2a (T-t)^{2-\frac2a}\Omega\left(\frac{x}{(T-t)^{\frac2a-3}}\right),
\]
\[
    u(x,t)
    :=
    \frac2a (T-t)^{\frac2a-4}\bar U\left(\frac{x}{(T-t)^{\frac2a-3}}\right),
\]
\[
    \theta(x,t)
    :=
    \frac4{a^2}(T-t)^{-2}\bar\Theta\left(\frac{x}{(T-t)^{\frac2a-3}}\right).
\]
By direct substitution, the profile equations imply
\[
    \theta_t+a u\theta_x=2u_x\theta,
    \qquad
    \omega_t+a u\omega_x=\theta_x,
    \qquad
    -u_{xx}=2\omega.
\]
Thus \((\omega,u,\theta)\) is an exact solution of
\eqref{eqt:gHL1_whole_space}. Since the time-dependent prefactors are
singular at \(t=T\) and the profile is nontrivial, this gives a self-similar
finite-time blowup solution.

The constants are chosen according to the scaling invariance of the profile
system described in \eqref{eq:profile_scaling_invariance},  with scaling factor \(2/a\), so that the effective self-similar scaling parameters are
\[
    \widetilde c_l=\frac2a-3,\qquad
    \widetilde c_\omega=2-\frac2a,\qquad
    \widetilde c_\theta=-2.
\]
The sign of \(\widetilde c_l\) determines the nature of the self-similar
scale. Indeed, the length scale is
    $(T-t)^{\widetilde c_l}.$
If \(0<a<\frac23\), then \(\widetilde c_l>0\), and the length scale shrinks to
zero as \(t\to T^-\), so the blowup is focusing. If \(a=\frac23\), then
\(\widetilde c_l=0\), so the length scale remains fixed and the blowup is
neither focusing nor expanding. If \(\frac23<a\leq1\), then
\(\widetilde c_l<0\), and the length scale expands to infinity as
\(t\to T^-\), so the blowup is expanding.
\end{proof}

This completes the passage from the self-similar profiles constructed in Sections~2 and~3 to finite-time blowup solutions of the evolution equation \eqref{eqt:gHL1}.

\section{Numerical Simulation}

In this section we present numerical simulations of the self-similar profiles constructed in the previous sections. The numerical results serve two purposes. First, they validate the fixed-point formulation by comparing the computed profiles with explicit solutions in some special case. Second, they illustrate how the profiles depend on the advection parameter \(a\) in the periodic and whole-space settings.

\subsection{Numerical scheme}

We compute the profiles by an iterative scheme for the normalized variables \(v=u_x\), \(g=u/x\), and \(m=\theta/x^2\). In the periodic case we use the normalized system \eqref{eqt:gmu}, while in the whole-space case with a Neumann condition \(u_x(\infty)=0\) we use \eqref{eqt:gmu2}. The iteration is performed on the whole computational domain, rather than by first computing a local profile and then applying a separate ODE extension.

Given \(v^{(n)}\), we first update \(g^{(n)}\) using the equation for \(xg\). In the periodic case, the third equation of \eqref{eqt:gmu} gives
\[
    g^{(n)}(x)=\frac1x\int_0^x v^{(n)}(y)\idiff y,
    \qquad g^{(n)}(0)=1 .
\]
In the whole-space case, the third equation of \eqref{eqt:gmu2} gives
\[
    g^{(n)}(x)
    =
    \frac1x\int_0^x
    \left(v^{(n)}(y)+\frac1a-\frac32\right)\idiff y,
    \qquad
    g^{(n)}(0)=\frac1a-\frac12 .
\]
If \(g^{(n)}\) reaches zero on the computational grid, we truncate the profile at the first zero. More precisely, we define
\[
    L^{(n)}:=\inf\{x>0:\ g^{(n)}(x)\leq0\},
\]
with \(L^{(n)}\) determined by interpolation between grid points, and then carry out the remaining updates only on \([0,L^{(n)}]\). The value of \(g^{(n)}\) at the endpoint is set to be zero. This truncation is consistent with the endpoint behavior in the compactly supported or periodic cases, where the normalized profile reaches the boundary of its support when \(g\) vanishes. Beyond this point the active part of the profile is not evolved further in the iteration.

Once \(g^{(n)}\) is determined, we compute \(m^{(n)}\) explicitly from the first equation of the corresponding normalized system.  More precisely, integrating the first equation of \eqref{eqt:gmu} gives, in the periodic case,
\[
    m^{(n)}(x)
    =
    \frac12
    \bigl(g^{(n)}(x)\bigr)^{2/a}
    \exp\!\left(
    \frac{2(1-a)}{a}
    \int_0^x
    \frac{g^{(n)}(y)-1}{y\,g^{(n)}(y)}\idiff y
    \right).
\]
Similarly, integrating the first equation of \eqref{eqt:gmu2} gives, in the whole-space case,
\[
    m^{(n)}(x)
    =
    \frac{4-3a}{4}
    \left(\frac{g^{(n)}(x)}{g^{(n)}(0)}\right)^{2/a}
    \exp\!\left(
    \frac{2(1-a)}{a}
    \int_0^x
    \frac{g^{(n)}(y)-g^{(n)}(0)}
         {y\,g^{(n)}(y)}\idiff y
    \right).
\]
Thus the equations for \(g\) and \(m\) are solved explicitly at each step of the iteration.

It remains to update \(v\).  We treat the nonlinear equation for \(v\) by a semi-implicit linearization.  In the periodic case, the second equation of \eqref{eqt:gmu} contains the nonlinear factor \((av+3a-2)(v-1)\).  On \(0\leq x\leq1\), we freeze the first factor at the previous iterate and solve
\[
    2axg^{(n)}(x)\,\partial_x v^{(n+1)}(x)
    =
    \bigl(av^{(n)}(x)+3a-2\bigr)
    \bigl(v^{(n+1)}(x)-1\bigr)
    -4x^2m^{(n)}(x).
\]
On \(x>1\), we instead freeze the second factor and solve
\[
    2axg^{(n)}(x)\,\partial_x v^{(n+1)}(x)
    =
    \bigl(av^{(n+1)}(x)+3a-2\bigr)
    \bigl(v^{(n)}(x)-1\bigr)
    -4x^2m^{(n)}(x).
\]
In the whole-space case, the second equation of \eqref{eqt:gmu2} contains the nonlinear factor \(av(v-1)\).  We use the same strategy:
\[
    2axg^{(n)}(x)\,\partial_x v^{(n+1)}(x)
    =
    a v^{(n)}(x)\bigl(v^{(n+1)}(x)-1\bigr)
    -4x^2m^{(n)}(x),
    \qquad 0\leq x\leq1,
\]
and
\[
    2axg^{(n)}(x)\,\partial_x v^{(n+1)}(x)
    =
    a v^{(n+1)}(x)\bigl(v^{(n)}(x)-1\bigr)
    -4x^2m^{(n)}(x),
    \qquad x>1.
\]
Each of these updates is a first-order linear ODE for \(v^{(n+1)}\), and hence can be solved explicitly by an integrating factor.  The value \(v^{(n+1)}(0)=1\) is imposed by the normalization at the origin, and the solution on \(x>1\) is initialized by the value obtained from the interval \(0\leq x\leq1\).  The inner update is the same local fixed-point iteration used in the existence proof, while the outer update is chosen to better capture the far-field or endpoint behavior, where the local monotonicity used in the proof is no longer available globally.

We use a composite nonuniform grid on \([0,10^8+10]\). More precisely, we take a uniform grid on \([0,10]\) with step size \(h=10^{-5}\), and a stretched grid on \((10,10^8+10]\) defined by \(x_n=10^8(n/1000)^6+10\), \(n=1,2,\ldots,1000\). All integrals are evaluated by the trapezoidal rule. The apparent singularities at the origin are removed by using the limiting values dictated by the normalization.
We start from the smooth initial guess \(v_0(x)=1/(1+x^2)\). The iteration is robust with respect to this choice. We declare convergence when \(\|v^{(n+1)}-v^{(n)}\|_{L^\infty}<10^{-15}\), where the \(L^\infty\)-norm is computed over the numerical grid. In all cases tested, the iteration converges rapidly, typically within \(21\) steps.

After convergence, we reconstruct the profile variables from \(g\), \(m\), and \(v\). In the periodic case we set \(u=xg\), \(\theta=x^2m\), and \(\omega=-u_{xx}/2\). In the whole-space case, we use the normalization of Section~3 and reconstruct \(u=x(g-c_l/a)\), \(\theta=x^2m\), and \(\omega=-u_{xx}/2\). The computation is carried out only for \(x\geq0\), and the full profiles are recovered by symmetry: \(u\) and \(\omega\) are odd, while \(v\), \(g\), \(m\), and \(\theta\) are even.

\subsection{Validation at special parameter values}

We first validate the numerical method by comparing the computed profiles with explicit solutions at special parameter values. We use two checks. The first one is at \(a=1\), where explicit profiles are available in both the periodic and whole-space settings. The second one is in the whole-space setting for a small value of \(a\), where the numerical profile can be compared with the explicit limiting profile at \(a=0\). In all comparisons below, we focus on the normalized quantities \(v=u_x\) and \(m=\theta/x^2\).

We begin with \(a=1\). In the periodic setting, the explicit smooth profile is \(u_{\mathrm{ex}}(x)=2^{-1/2}\sin(\sqrt2 x)\), \(\omega_{\mathrm{ex}}(x)=2^{-1/2}\sin(\sqrt2 x)\), and \(\theta_{\mathrm{ex}}(x)=\frac14\sin^2(\sqrt2 x)\), with half-period \(L=\pi/\sqrt2\). The corresponding normalized quantities are
\[
    v_{\mathrm{ex}}(x)=\cos(\sqrt2 x),
    \qquad
    m_{\mathrm{ex}}(x)=\frac{\sin^2(\sqrt2 x)}{4x^2}.
\]
Here \(m_{\mathrm{ex}}(0)\) is understood in the limiting sense, so \(m_{\mathrm{ex}}(0)=1/2\).

In the whole-space setting with a Neumann condition, the explicit compactly supported profile at \(a=1\) is given on \([-L,L]\) by \(u_{\mathrm{ex}}(x)=x/2+\frac14\sin(2x)\), \(\omega_{\mathrm{ex}}(x)=\frac12\sin(2x)\), and \(\theta_{\mathrm{ex}}(x)=\frac1{16}\sin^2(2x)\), where \(L=\pi/2\). On \([-L,L]\), the corresponding normalized quantities are
\[
    v_{\mathrm{ex}}(x)=\frac12+\frac12\cos(2x)=\cos^2 x,
    \qquad
    m_{\mathrm{ex}}(x)=\frac{\sin^2(2x)}{16x^2}.
\]
Again \(m_{\mathrm{ex}}(0)\) is defined by its limiting value, which is \(1/4\). Figure~\ref{fig:validation_a1} compares the numerical profiles with these explicit solutions in both settings.

We also test the whole-space scheme near the limiting case \(a=0\). Although the main construction is for \(a>0\), the limiting equation at \(a=0\) admits an explicit profile. In this limit, \(u_0(x)=-x+2\sqrt2\arctan(x/\sqrt2)\) under the normalization \(u_0(0)=0\). The corresponding normalized quantities are
\[
    v_0(x)=\frac{1-x^2/2}{1+x^2/2},
    \qquad
    m_0(x)=\frac{1}{(1+x^2/2)^2}.
\]
We therefore compute the whole-space profile at \(a=0.001\) and compare its \(v\) and \(m\) components with \(v_0\) and \(m_0\). This comparison is shown in Figure~\ref{fig:validation_a0}. Together, these tests validate the semi-implicit iteration, the quadrature rule, and the reconstruction of the normalized profile variables in both the compactly supported and full-support regimes.

\begin{figure}[htbp]
    \centering
    \begin{subfigure}{0.49\textwidth}
        \includegraphics[width=\textwidth]{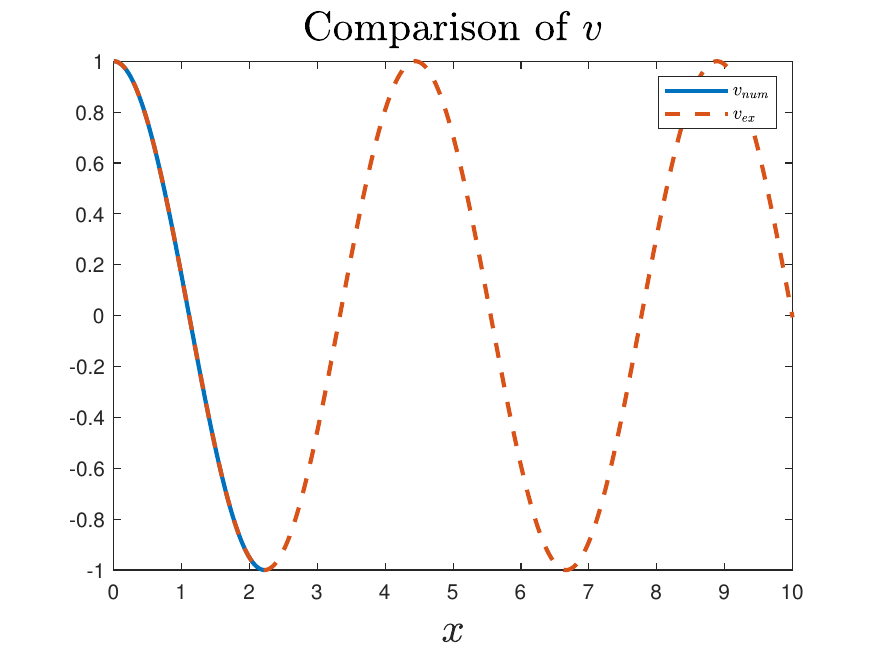}
        \caption{Periodic case: comparison of \(v\).}
        \label{fig:validation_periodic_a1_v}
    \end{subfigure}
    \begin{subfigure}{0.49\textwidth}
        \includegraphics[width=\textwidth]{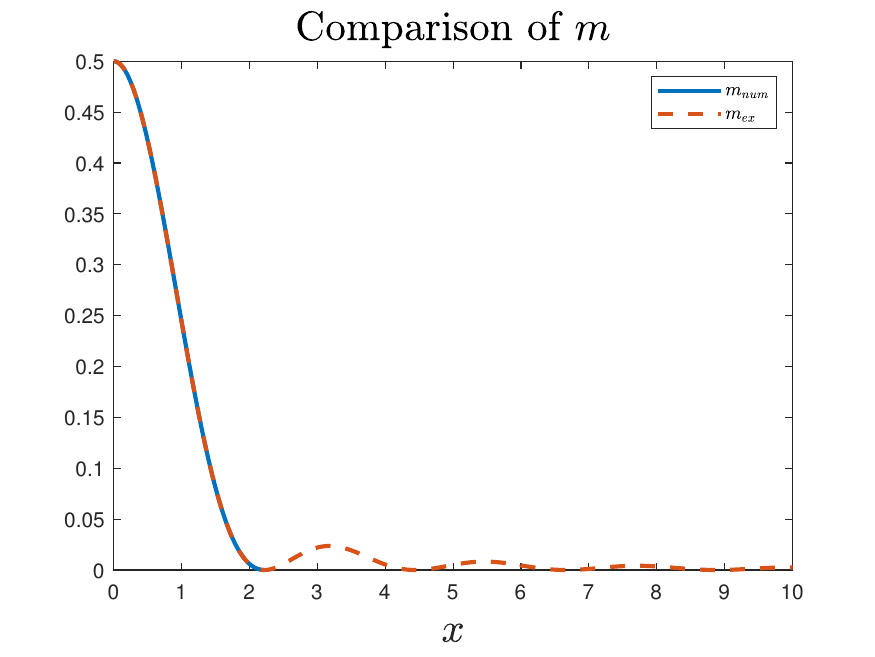}
        \caption{Periodic case: comparison of \(m\).}
        \label{fig:validation_periodic_a1_m}
    \end{subfigure}
    \begin{subfigure}{0.49\textwidth}
        \includegraphics[width=\textwidth]{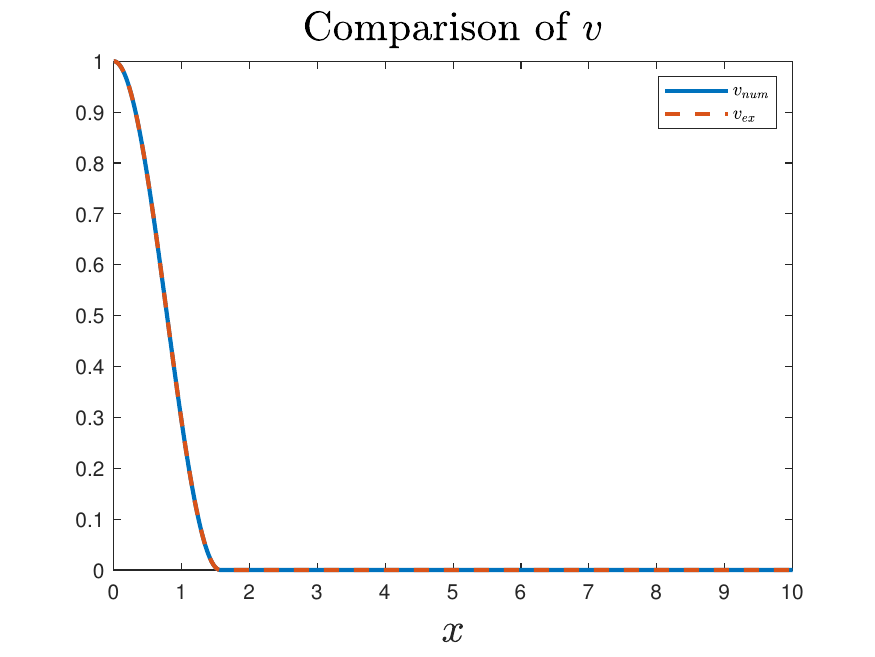}
        \caption{Whole-space case: comparison of \(v\).}
        \label{fig:validation_wholespace_a1_v}
    \end{subfigure}
    \begin{subfigure}{0.49\textwidth}
        \includegraphics[width=\textwidth]{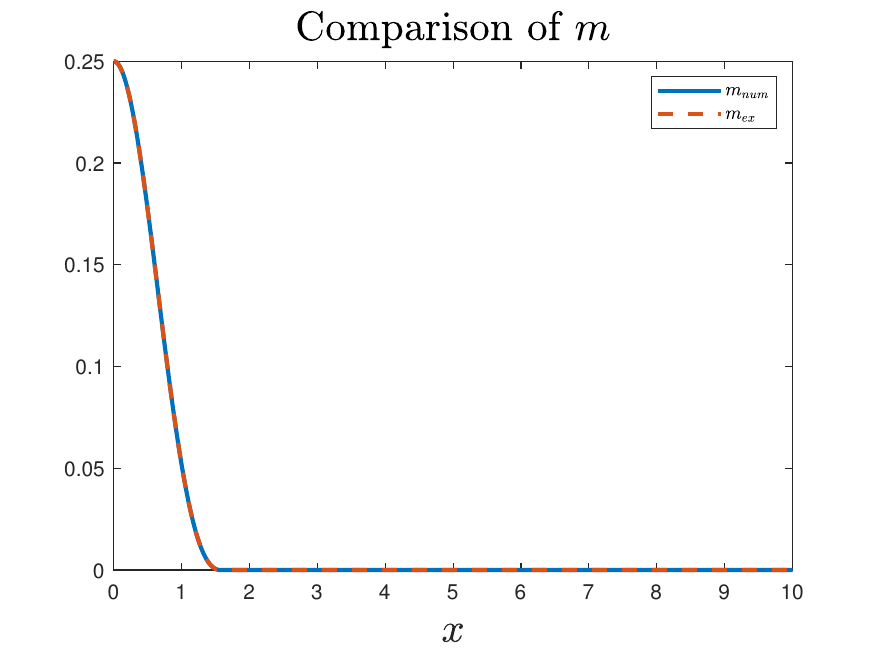}
        \caption{Whole-space case: comparison of \(m\).}
        \label{fig:validation_wholespace_a1_m}
    \end{subfigure}
    \caption{Validation of the numerical method at \(a=1\). The numerical profiles are compared with the explicit profiles for \(v=u_x\) and \(m=\theta/x^2\) in the periodic and whole-space settings.}
    \label{fig:validation_a1}
\end{figure}

\begin{figure}[htbp]
    \centering
    \begin{subfigure}{0.49\textwidth}
         \includegraphics[width=\textwidth]{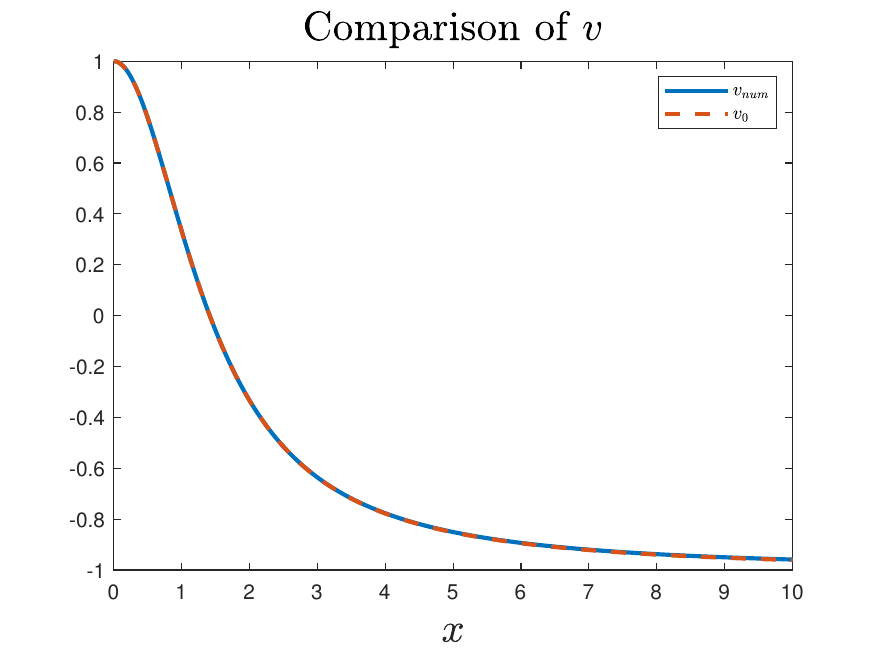}
        \caption{Comparison of \(v\).}
        \label{fig:validation_a0_v}
    \end{subfigure}
    \begin{subfigure}{0.49\textwidth}
         \includegraphics[width=\textwidth]{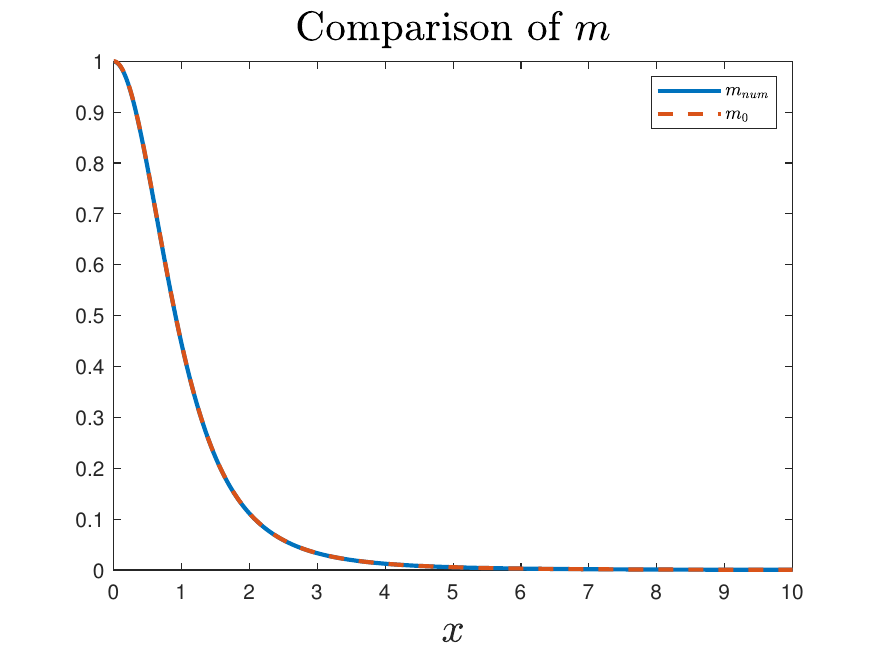}
        \caption{Comparison of \(m\).}
        \label{fig:validation_a0_m}
    \end{subfigure}
    \caption{Validation of the whole-space numerical profile near \(a=0\). The numerical profile at \(a=0.001\) is compared with the explicit limiting profile at \(a=0\) for \(v=u_x\) and \(m=\theta/x^2\).}
    \label{fig:validation_a0}
\end{figure}

\subsection{Dependence of the profiles on \(a\)}

We next compute profiles for a range of values of the advection parameter \(a\). In the periodic case, we compute profiles for \(2/3< a\leq1\), and the results are shown in Figure~\ref{fig:v_profiles}(a). These computations illustrate the periodic profile construction in Theorem~\ref{theorem: main_periodic}, together with its limiting behavior near the endpoints. In particular, for \(2/3<a<1\), the numerical profiles have a finite half-period and correspond to exact self-similar blowup with fixed scale. As \(a\) decreases from \(1\) toward \(2/3\), the half-period length increases, suggesting that the periodic profiles approach a whole-space limiting profile as \(a\to(2/3)^+\).

In the whole-space case with a Neumann condition \(u_x(\infty)=0\), we compute profiles for \(0<a\leq1\), as shown in Figure~\ref{fig:v_profiles}(b). These computations illustrate the three regimes in Theorem~\ref{theorem: main_whole_space}. For \(0<a<2/3\), the profiles have full support and correspond to focusing self-similar blowup. At the critical value \(a=2/3\), the self-similar scaling parameter \(c_l\) vanishes and the profile remains full-support. For \(2/3<a\leq1\), the profiles become compactly supported, in agreement with the expanding self-similar regime.

The plots also illustrate why the fixed-point argument in the proof is formulated only locally near the origin. In all cases, the profile \(v=u_x\) is monotone decreasing near the origin, which is consistent with the local fixed-point space used in Sections~2 and~3. However, this monotonicity is not global in general. In the outer region, especially for profiles with large support or full support, \(v\) may cease to be monotone. This numerical observation supports the structure of the proof: the monotonicity assumptions are used to construct the profile locally near the origin, while the behavior away from the origin is treated by a separate extension or continuation argument.

We single out the critical profile \(a=2/3\) in Figure~\ref{fig:critical_a23}. This profile plays a dual role. It is the critical case in the whole-space setting separating different self-similar regimes, and it also represents the endpoint of the exact self-similar periodic setting when the period tends to infinity. Since \(c_l=0\), the profile lies at the transition between the focusing regime \(0<a<2/3\) and the expanding regime \(2/3<a\leq1\) in the whole-space setting. The plot of \(v\) shows the local monotonicity near the origin and the noncompact far-field behavior, while the plot of \(u\) shows that the velocity profile does not terminate at a finite endpoint; instead, it approaches a positive constant as \(x\to+\infty\). This provides numerical evidence that the critical profile has full support rather than compact support. It confirms the full-support conclusion in Theorem~\ref{theorem: main_whole_space} and illustrates how the periodic profiles degenerate into a whole-space profile in the critical limit.

\begin{figure}[htbp]
    \centering
    \begin{subfigure}{0.49\textwidth}
        \includegraphics[width=\textwidth]{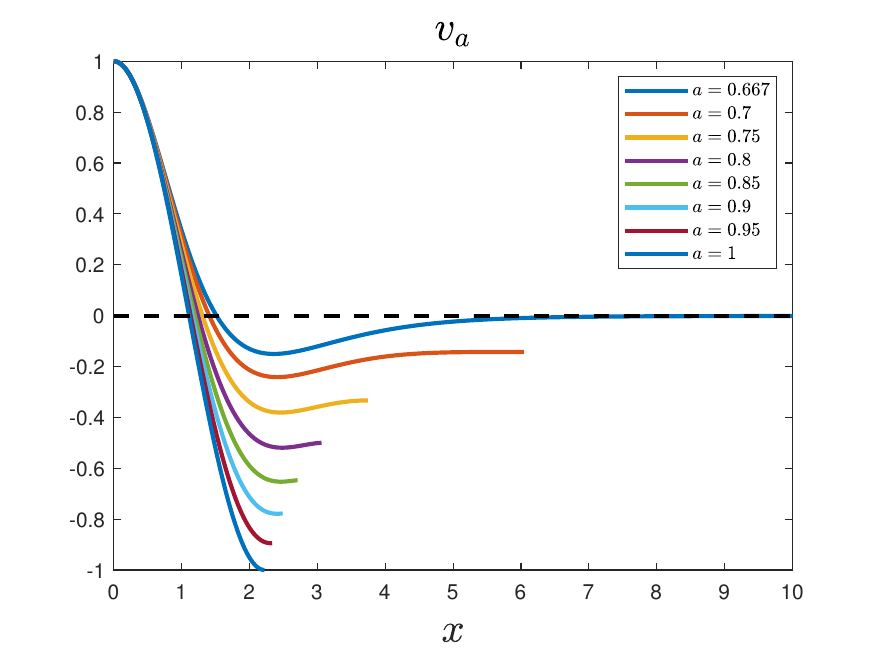}
        \caption{Periodic case.}
        \label{fig:v_periodic}
    \end{subfigure}
    \begin{subfigure}{0.49\textwidth}
        \includegraphics[width=\textwidth]{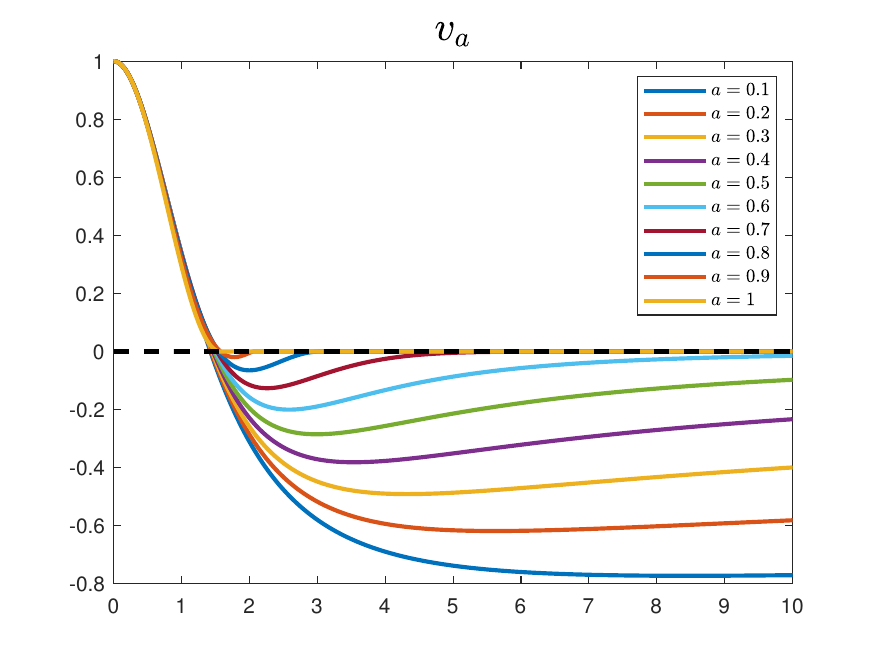}
        \caption{Whole-space case with Neumann condition.}
        \label{fig:v_nonperiodic}
    \end{subfigure}
\caption{Numerically computed profiles \(v_a=u_x\) for different values of \(a\). Left: periodic profiles restricted to the positive half-period near the origin, illustrating the local behavior of the fixed-scale periodic regime in Theorem~\ref{theorem: main_periodic}. Right: whole-space profiles with a Neumann condition, illustrating the focusing, critical fixed-scale, and expanding regimes in Theorem~\ref{theorem: main_whole_space}.}
    \label{fig:v_profiles}
\end{figure}

\begin{figure}[htbp]
    \centering
    \begin{subfigure}{0.49\textwidth}
        \includegraphics[width=\textwidth]{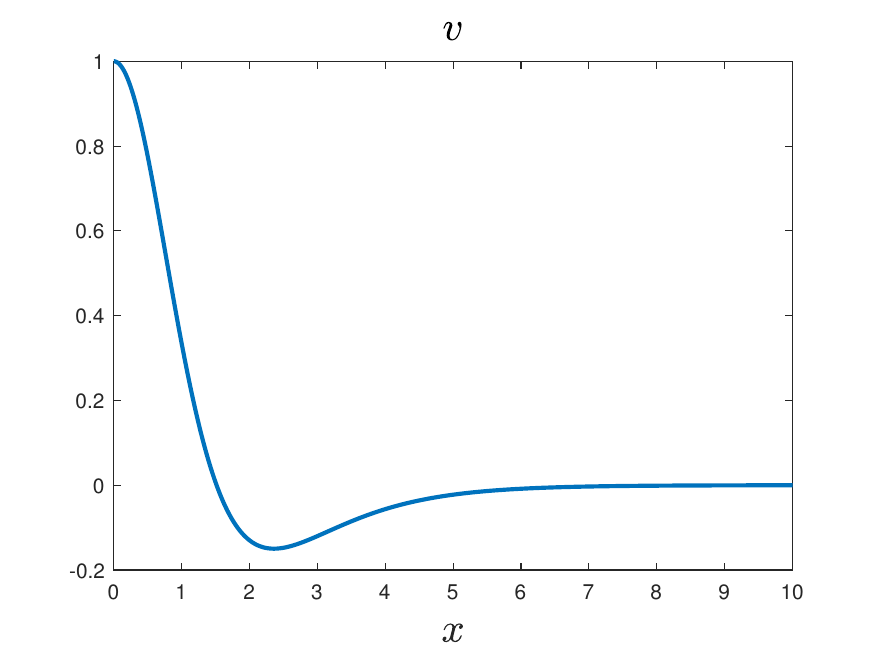}
        \caption{The profile \(v=u_x\).}
        \label{fig:v_critical_a23}
    \end{subfigure}
    \begin{subfigure}{0.49\textwidth}
        \includegraphics[width=\textwidth]{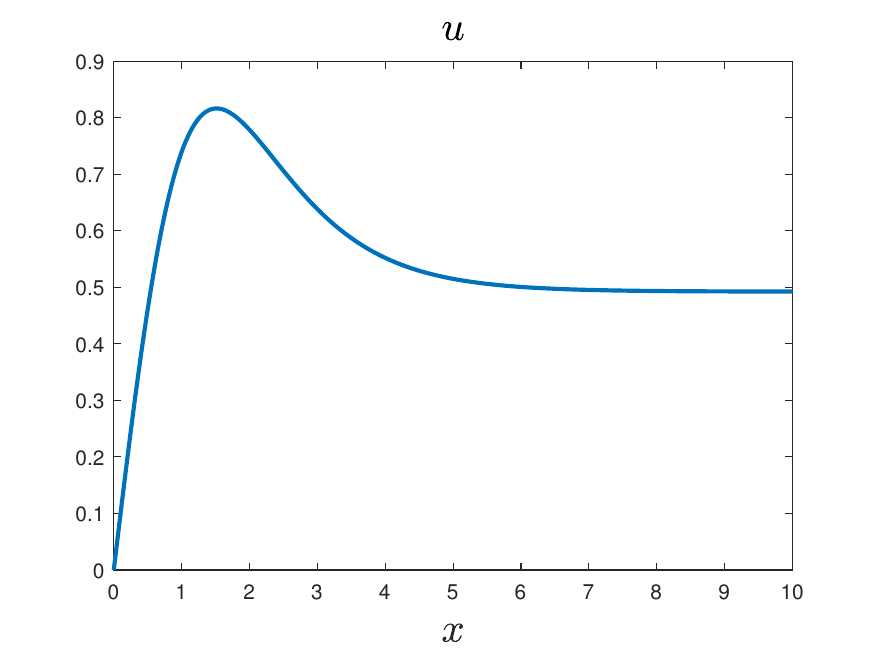}
        \caption{The velocity profile \(u\).}
        \label{fig:u_critical_a23}
    \end{subfigure}
    \caption{The critical whole-space profile at \(a=2/3\). The plots illustrate the full-support fixed-scale profile at the transition between the focusing regime \(0<a<2/3\) and the expanding regime \(2/3<a\leq1\). In particular, \(u\) approaches a positive constant as \(x\to+\infty\), which is consistent with the full-support behavior stated in Theorem~\ref{theorem: main_whole_space}.}
    \label{fig:critical_a23}
\end{figure}

\section*{Acknowledgments}
This research is supported in part by NSF grants DMS-2205590 and DMS-2512878, the Choi Family Gift Fund, and the Dr. Mike Yan Gift Fund. Xiang Qin and Xiuyuan Wang would also like to thank Caltech's Summer Undergraduate Research Fellowships (SURF) program for the opportunity to participate in this research.

\section*{Conflict of interest}

On behalf of all authors, the corresponding author states that there is no conflict of interest.

\section*{Data availability}

No external datasets were used in this study. The numerical results supporting the figures and validation presented in this article are available from the corresponding author upon request.

\bibliographystyle{myalpha}
\bibliography{reference}

@article{chen2021finite,
  title={On the finite time blowup of the {D}e {G}regorio model for the 3{D} {E}uler equations},
  author={Chen, Jiajie and Hou, Thomas Y and Huang, De},
  journal={Communications on pure and applied mathematics},
  volume={74},
  number={6},
  pages={1282--1350},
  year={2021},
  publisher={Wiley Online Library}
}

@article{chen2020singularity,
  title={Singularity formation and global well-posedness for the generalized {C}onstantin--{L}ax--{M}ajda equation with dissipation},
  author={Chen, Jiajie},
  journal={Nonlinearity},
  volume={33},
  number={5},
  pages={2502},
  year={2020},
  publisher={IOP Publishing}
}

@article{constantin1985simple,
  title={A simple one-dimensional model for the three-dimensional vorticity equation},
  author={Constantin, Peter and Lax, Peter D and Majda, Andrew},
  journal={Communications on pure and applied mathematics},
  volume={38},
  number={6},
  pages={715--724},
  year={1985},
  publisher={Wiley Online Library}
}

@article{elgindi2020effects,
  title={On the effects of advection and vortex stretching},
  author={Elgindi, Tarek M and Jeong, In-Jee},
  journal={Archive for Rational Mechanics and Analysis},
  volume={235},
  number={3},
  pages={1763--1817},
  year={2020},
  publisher={Springer}
}

@article{cordoba2005formation,
  title={Formation of singularities for a transport equation with nonlocal velocity},
  author={C{\'o}rdoba, Antonio and C{\'o}rdoba, Diego and Fontelos, Marco A},
  journal={Annals of mathematics},
  pages={1377--1389},
  year={2005},
  publisher={JSTOR}
}

@article{castro2010infinite,
  title={Infinite energy solutions of the surface quasi-geostrophic equation},
  author={Castro, A and C{\'o}rdoba, D},
  journal={Advances in Mathematics},
  volume={225},
  number={4},
  pages={1820--1829},
  year={2010},
  publisher={Elsevier}
}

@article{okamoto2008generalization,
  title={On a generalization of the {C}onstantin--{L}ax--{M}ajda equation},
  author={Okamoto, Hisashi and Sakajo, Takashi and Wunsch, Marcus},
  journal={Nonlinearity},
  volume={21},
  number={10},
  pages={2447},
  year={2008},
  publisher={IOP Publishing}
}

@article{hou2008dynamic,
  title={Dynamic stability of the three-dimensional axisymmetric {N}avier--{S}tokes equations with swirl},
  author={Hou, Thomas Y and Li, Congming},
  journal={Communications on Pure and Applied Mathematics: A Journal Issued by the Courant Institute of Mathematical Sciences},
  volume={61},
  number={5},
  pages={661--697},
  year={2008},
  publisher={Wiley Online Library}
}

@article{hou2023blowup,
  title={Blowup analysis for a quasi-exact 1D model of 3D Euler and Navier-Stokes},
  author={Hou, Thomas Y and Wang, Yixuan},
  journal={arXiv preprint arXiv:2306.04146},
  year={2023}
}

@article{huang2023self,
  title={Self-similar finite-time blowups with smooth profiles of the generalized Constantin-Lax-Majda model},
  author={Huang, De and Qin, Xiang and Wang, Xiuyuan and Wei, Dongyi},
  journal={arXiv preprint arXiv:2305.05895},
  year={2023}
}

@article{elgindi2021finite,
  title={Finite-time Singularity Formation for c\^{}1,$\alpha$ Solutions to the Incompressible Euler Equations on r\^{}3},
  author={Elgindi, Tarek M},
  journal={Annals of Mathematics},
  volume={194},
  number={3},
  pages={647--727},
  year={2021},
  publisher={Department of Mathematics, Princeton University Princeton, New Jersey, USA}
}

@article{luo2014potentially,
  title={Potentially singular solutions of the 3D axisymmetric Euler equations},
  author={Luo, Guo and Hou, Thomas Y},
  journal={Proceedings of the National Academy of Sciences},
  volume={111},
  number={36},
  pages={12968--12973},
  year={2014},
  publisher={National Acad Sciences}
}

@article{cordoba2023finite,
  author = {C{\'o}rdoba, Diego and Mart{\'i}nez-Zoroa, Luis and Zheng, Fan},
  title = {Finite Time Singularities to the 3D Incompressible Euler Equations for Solutions in {$C^{\infty}(\mathbb{R}^3\setminus\{0\})\cap C^{1,\alpha}\cap L^2$}},
  journal = {Annals of PDE},
  volume = {11},
  pages = {19},
  year = {2025},
  doi = {10.1007/s40818-025-00214-2},
  eprint = {2308.12197},
  archivePrefix = {arXiv},
  primaryClass = {math.AP}
}

@misc{shao2026self,
  author = {Shao, Feng and Wei, Dongyi and Zhang, Ping and Zhang, Zhifei},
  title = {Self-similar blow-up solutions of incompressible Euler equations in {$\mathbb{R}^d$}, {$d\geq 3$}, with {$C^{1,1-2/d-}$} velocity},
  year = {2026},
  eprint = {2605.19716},
  archivePrefix = {arXiv},
  primaryClass = {math.AP}
}

@misc{shkoller2026euler,
  author = {Shkoller, Steve},
  title = {Incompressible Euler Blowup at the {$C^{1,\frac{1}{3}}$} Threshold},
  year = {2026},
  eprint = {2603.10945},
  archivePrefix = {arXiv},
  primaryClass = {math.AP}
}

@misc{chen2026eulerI,
  author = {Chen, Jiajie},
  title = {Asymptotically Self-Similar Blowup for 3D Incompressible Euler with {$C^{1,1/3-}$} Velocity I: {$C^\infty$} 1D Limiting Profiles},
  year = {2026},
  eprint = {2605.15149},
  archivePrefix = {arXiv},
  primaryClass = {math.AP}
}

@misc{chen2026eulerII,
  author = {Chen, Jiajie},
  title = {Asymptotically Self-Similar Blowup for 3D Incompressible Euler with {$C^{1,1/3-}$} Velocity II: 3D Profiles, Blowup, and Limiting Behavior},
  year = {2026},
  eprint = {2605.15130},
  archivePrefix = {arXiv},
  primaryClass = {math.AP}
}

@misc{chenhou2022stable,
  author = {Chen, Jiajie and Hou, Thomas Y.},
  title = {Stable Nearly Self-Similar Blowup of the 2D Boussinesq and 3D Euler Equations with Smooth Data I: Analysis},
  year = {2022},
  eprint = {2210.07191},
  archivePrefix = {arXiv},
  primaryClass = {math.AP}
}

@article{jia2014local,
  author = {Jia, Hao and {\v S}ver{\'a}k, Vladim{\'i}r},
  title = {Local-in-space Estimates Near Initial Time for Weak Solutions of the Navier--Stokes Equations and Forward Self-Similar Solutions},
  journal = {Inventiones Mathematicae},
  volume = {196},
  number = {1},
  pages = {233--265},
  year = {2014},
  doi = {10.1007/s00222-013-0468-x},
  eprint = {1204.0529},
  archivePrefix = {arXiv},
  primaryClass = {math.AP}
}

@article{jia2015are,
  author = {Jia, Hao and {\v S}ver{\'a}k, Vladim{\'i}r},
  title = {Are the Incompressible 3D Navier--Stokes Equations Locally Ill-posed in the Natural Energy Space?},
  journal = {Journal of Functional Analysis},
  volume = {268},
  number = {12},
  pages = {3734--3766},
  year = {2015},
  doi = {10.1016/j.jfa.2015.04.006},
  eprint = {1306.2136},
  archivePrefix = {arXiv},
  primaryClass = {math.AP}
}

@article{guillod2023numerical,
  author = {Guillod, Julien and {\v S}ver{\'a}k, Vladim{\'i}r},
  title = {Numerical Investigations of Non-uniqueness for the Navier--Stokes Initial Value Problem in Borderline Spaces},
  journal = {Journal of Mathematical Fluid Mechanics},
  volume = {25},
  number = {3},
  pages = {46},
  year = {2023},
  doi = {10.1007/s00021-023-00789-5},
  eprint = {1704.00560},
  archivePrefix = {arXiv},
  primaryClass = {math.AP}
}

@article{albritton2022nonuniqueness,
  author = {Albritton, Dallas and Bru{\'e}, Elia and Colombo, Maria},
  title = {Non-uniqueness of Leray Solutions of the Forced Navier--Stokes Equations},
  journal = {Annals of Mathematics},
  volume = {196},
  number = {1},
  pages = {415--455},
  year = {2022},
  doi = {10.4007/annals.2022.196.1.3},
  eprint = {2112.03116},
  archivePrefix = {arXiv},
  primaryClass = {math.AP}
}

@misc{houwangyang2025nonuniqueness,
  author = {Hou, Thomas and Wang, Yixuan and Yang, Changhe},
  title = {Nonuniqueness of Leray--Hopf Solutions to the Unforced Incompressible 3D Navier--Stokes Equation},
  year = {2025},
  eprint = {2509.25116},
  archivePrefix = {arXiv},
  primaryClass = {math.AP}
}

@misc{vishik2018partI,
  author = {Vishik, Misha},
  title = {Instability and Non-uniqueness in the Cauchy Problem for the Euler Equations of an Ideal Incompressible Fluid. Part I},
  year = {2018},
  eprint = {1805.09426},
  archivePrefix = {arXiv},
  primaryClass = {math.AP}
}

@misc{vishik2018partII,
  author = {Vishik, Misha},
  title = {Instability and Non-uniqueness in the Cauchy Problem for the Euler Equations of an Ideal Incompressible Fluid. Part II},
  year = {2018},
  eprint = {1805.09440},
  archivePrefix = {arXiv},
  primaryClass = {math.AP}
}

@book{albritton2024vishik,
  author = {Albritton, Dallas and Bru{\'e}, Elia and Colombo, Maria and De Lellis, Camillo and Giri, Vikram and Janisch, Maximilian and Kwon, Hyunju},
  title = {Instability and Non-uniqueness for the 2D Euler Equations, after M. Vishik},
  series = {Annals of Mathematics Studies},
  volume = {219},
  publisher = {Princeton University Press},
  address = {Princeton},
  year = {2024}
}

@misc{mengualsolera2026sharp,
  author = {Mengual, Francisco and Solera, Marcos},
  title = {Sharp Nonuniqueness for the Forced 2D Navier--Stokes and Dissipative SQG Equations},
  year = {2026},
  eprint = {2601.00331},
  archivePrefix = {arXiv},
  primaryClass = {math.AP}
}

@article{houhuang2022twoscale, author = {Hou, Thomas Y. and Huang, De}, title = {A Potential Two-Scale Traveling Wave Singularity for 3D Incompressible Euler Equations}, journal = {Physica D: Nonlinear Phenomena}, volume = {435}, pages = {133257}, year = {2022}, doi = {10.1016/j.physd.2022.133257} }

@article{hou2023interior, author = {Hou, Thomas Y.}, title = {Potential Singularity of the 3D Euler Equations in the Interior Domain}, journal = {Foundations of Computational Mathematics}, volume = {23}, pages = {2203--2249}, year = {2023}, doi = {10.1007/s10208-022-09585-5}, eprint = {2107.05870}, archivePrefix = {arXiv}, primaryClass = {math.AP} }

@article{hou2023navierstokes, author = {Hou, Thomas Y.}, title = {Potentially Singular Behavior of the 3D Navier--Stokes Equations}, journal = {Foundations of Computational Mathematics}, volume = {23}, pages = {2251--2299}, year = {2023}, doi = {10.1007/s10208-022-09578-4}, eprint = {2107.06509}, archivePrefix = {arXiv}, primaryClass = {math.AP} }

@article{houhuang2023degenerate, author = {Hou, Thomas Y. and Huang, De}, title = {Potential Singularity Formation of Incompressible Axisymmetric Euler Equations with Degenerate Viscosity Coefficients}, journal = {Multiscale Modeling \& Simulation}, volume = {21}, number = {1}, pages = {218--268}, year = {2023}, doi = {10.1137/22M1470906}, eprint = {2102.06663}, archivePrefix = {arXiv}, primaryClass = {math.AP} }

@article{hou2026generalized, author = {Hou, Thomas Y.}, title = {Nearly Self-similar Blowup of Generalized Axisymmetric Navier--Stokes Equations}, journal = {Foundations of Computational Mathematics}, year = {2026}, doi = {10.1007/s10208-026-09748-8} }
\end{document}